\newtheorem{theorem}{Theorem}[section]
\newtheorem*{theorem*}{Theorem}
\newtheorem{lemma}[theorem]{Lemma}
\newtheorem{corollary}[theorem]{Corollary}
\newtheorem{proposition}[theorem]{Proposition}
\newtheorem{definition}[theorem]{Definition}
\newtheorem{example}[theorem]{Example}
\newtheorem{claim}{Claim}
\newtheorem{remark}[theorem]{Remark}
\newcommand{\R}{\mathbb{R}}
\newcommand{\C}{\mathbb{C}}
\newcommand{\N}{\mathbb{N}}
\begin{document}

\title[Multiplicity, regularity and blow-spherical equivalence]
{Multiplicity, regularity and blow-spherical equivalence of real analytic sets}

\author[J. Edson Sampaio]{Jos\'e Edson Sampaio}

\address{Jos\'e Edson Sampaio:  Departamento de Matem\'atica, Universidade Federal do Cear\'a,
	      Rua Campus do Pici, s/n, Bloco 914, Pici, 60440-900, 
	      Fortaleza-CE, Brazil.  
          E-mail: {\tt edsonsampaio@mat.ufc.br}   
}

\thanks{The author was partially supported by CNPq-Brazil grant 303811/2018-8. 
}

\keywords{Blow-spherical equivalence, Real analytic sets, Multiplicity}
\subjclass[2010]{14B05; 14P25; 32S50}

\begin{abstract}
This article is devoted to studying multiplicity and regularity of real analytic sets. We present an equivalence for real analytic sets, named blow-spherical equivalence, which generalizes differential equivalence and subanalytic bi-Lipschitz equivalence and, with this approach, we obtain several applications on analytic sets. On regularity, we show that blow-spherical regularity of real analytic implies $C^1$ smoothness only in the case of real analytic curves. On multiplicity, we present a generalization for Gau-Lipman's Theorem about differential invariance of the multiplicity in the complex and real cases, we show that the multiplicity ${\rm mod}\,2$ is invariant by blow-spherical homeomorphisms in the case of real analytic curves and surfaces and also for a class of real analytic foliations and is invariant by (image) arc-analytic blow-spherical homeomorphisms in the case of real analytic hypersurfaces, generalizing some results proved by G. Valette. We present also a complete classification of the germs of real analytic curves.
\end{abstract}

\maketitle

\section{Introduction}


Recently, L. Birbrair, A. Fernandes and V. Grandjean in \cite{BirbrairFG:2017} (see also \cite{BirbrairFG:2012} and \cite{Sampaio:2015}) defined a new equivalence, named blow-spherical equivalence, to study some properties of subanalytic sets such as, for example, to generalize thick-thin decomposition of normal complex surface singularity germs introduced in \cite{BirbrairNP:2014}. 
With the aim to study multiplicity and regularity as well as to present some classifications of real and complex analytic sets, we have the following weaker variation of the equivalence blow-spherical presented in \cite{BirbrairFG:2017}, namely also blow-spherical equivalence. 

\begin{definition}
Let $X\subset\R^n$ and $Y\subset\R^p$ be two subsets containing respectively the origin of $\R^n$ and $\R^p$.  A homeomorphism $\varphi\colon (X,0)\to (Y,0)$ is called a {\bf blow-spherical homeomorphism}, if the homeomorphism
$$
\beta_p^{-1}\circ \varphi\circ\beta_n\colon \beta_n^{-1}(X\setminus \{0\})\to \beta_p^{-1}(Y\setminus \{0\})
$$
extends as a continuous mapping $\varphi'\colon \overline{\beta_n^{-1}(X\setminus \{0\})}\to \overline{\beta_p^{-1}(Y\setminus \{0\})}$, where $\beta _k:\mathbb{S}^{k-1}\times [0,+\infty)\to \R^k$ is the mapping given by $\beta_k(x,r)=rx$. In this case, we say that the germs $(X,0)$ and $(Y,0)$ are said {\bf blow-spherical equivalent} or {\bf blow-spherical homeomorphic}.
\end{definition}

Roughly speaking, two subset germs of Euclidean spaces are called blow-spherical equivalent, if their spherical modifications are homeomorphic and, in particular, this homeomorphism induces a homeomorphism between their tangent links. In particular, this equivalence lives strictly between topological equivalence and subanalytic bi-Lipschitz equivalence and between topological equivalence and differential equivalence.

In \cite{Sampaio:2020}, the author presented some results on multiplicity,  regularity and blow-spherical geometry of complex analytic sets. In this article, we present similar results in the case of real analytic sets, for example, we obtain some results on real versions of Zariski's multiplicity conjecture, we have that blow-spherical regular real analytic curves are $C^1$ smooth and we obtain a complete classification to real analytic curves.

When the subject is about multiplicity, the more famous open problem is the Zariski's multiplicity conjecture. O. Zariski in 1971 (see \cite{Zariski:1971}) asked the  following.
\begin{enumerate}[leftmargin=*]\label{zariski}
\item[]{\bf Question A.} Let $f,g:(\C^n,0)\to (\C,0)$ be two reduced complex analytic functions. If there is a homeomorphism $\varphi:(\C^n,V(f),0)\to (\C^n,V(g),0)$, then is it $m(V(f),0)=m(V(g),0)$?
\end{enumerate} 
This is still an open problem. However, several authors approached it, as for example R. Ephraim in \cite{Ephraim:1976} and D. Trotman in \cite{Trotman:1977} showed that Question A has a positive answer when the homeomorphism $\varphi$ is a $C^1$ diffeomorphism. Another important result about this problem was proved by Y.-N. Gau and J. Lipman in the article \cite{Gau-Lipman:1983}, they proved that if $X,Y \subset\C^n$ are complex analytic sets and there is a homeomorphism $\varphi\colon (\C^n,X,0)\to(\C^n,Y,0)$ such that $\varphi$ and $\varphi^{-1}$ are differentiable at the origin, then the multiplicities of $X$ and $Y$ at the origin are equal (see \cite{Chirka:1989} for a definition of multiplicity).  
In the case of real analytic sets, Question A has a negative answer, as we can see in the following example.
\begin{example}\label{ex_zariski}
Let $X=\{(x,y)\in\R^2;\, y=0\}$, $Y=\{(x,y)\in\R^2;\, y^3=x^2\}$. $\varphi: \R^2\to \R^2$ given by $\varphi(x,y)=(x,x^{\frac{2}{3}}-y)$. Then, $\varphi$ is homeomorphism such that $\varphi(X)=Y$, but $m(X)\equiv 1\, {\rm mod\,}2$ and $m(Y)\equiv 0\, {\rm mod\,} 2$.
\end{example}

However, some authors approached some versions of Question A in the real case. For example, J.-J. Risler in \cite{Risler:2001} proved that multiplicity ${\rm mod\,} 2$ of a real analytic curve is invariant by bi-Lipschitz homeomorphisms. T. Fukui, K. Kurdyka and L. Paunescu in \cite{FukuiKP:2004} made the following conjecture
\begin{enumerate}[leftmargin=*]
\item[]{\bf Conjecture F-K-P.} Let $h : (\R^n, 0) \to (\R^n,0)$ be the germ of a subanalytic, arc-analytic, bi-Lipschitz homeomorphism, and let $X, Y\subset\R^n$ be two irreducible analytic germs. Suppose that $Y = h(X)$, then $m(X) = m(Y)$.
\end{enumerate} 
and proved that the multiplicity of a real analytic curve is invariant by arc-analytic bi-Lipschitz homeomorphisms. G. Valette in \cite{Valette:2010} proved that the multiplicity ${\rm mod\,} 2$ of a real analytic hypersurface is invariant by arc-analytic bi-Lipschitz homeomorphisms and the multiplicity ${\rm mod\,} 2$ of a real analytic surface is invariant by subanalytic bi-Lipschitz homeomorphisms. Recently, the author in \cite{Sampaio:2020c} proved a real version of Gau-Lipman's Theorem and in \cite{Sampaio:2020b} proposed the following conjecture
\begin{enumerate}[leftmargin=*]
\item[]{\bf Conjecture $A_{\R}$ (Lip).} Let $f,g:(\R^n,0)\to (\R,0)$ be two real analytic functions. If there is a bi-Lipschitz homeomorphism $\varphi\colon(\R^n,V(f),0)\to (\R^n,V(g),0)$, then $m(V(f),0)\equiv m(V(g),0)\, {\rm mod\,} 2$.
\end{enumerate}
and proved that it has a positive answer when $n=3$. 

In this article, we consider the following version of the Conjecture $A_{\R}$ (Lip).
\begin{enumerate}[leftmargin=*]
\item[]{\bf Conjecture $A_{\R}$ (BS).} Let $X,Y\subset \mathbb{R}^n$ be two real analytic sets. If there is a blow-spherical homeomorphism $\varphi\colon(\R^n,X,0)\to (\R^n,Y,0)$, then $m(X)\equiv m(Y)\, {\rm mod\,} 2$.
\end{enumerate}

Another subject approached in \cite{Sampaio:2020} was the study of the regularity of complex analytic sets, for example, it was proved there that any complex analytic set which is blow-spherical regular (see Definition \ref{def:bs_regular}) must be smooth. This is also a subject of interest for many mathematicians, for instance, Mumford in \cite{Mumford:1961} showed that a topological regular complex surface in $\C^3$,  with isolated singularity, is smooth. In high dimension, N. A'Campo in \cite{Acampo:1973} and L\^e D. T. in \cite{Le:1973} showed that if $X$ is a complex analytic hypersurface in $\C^n$ which is a topological submanifold, then $X$ is smooth. Recently, the author in \cite{Sampaio:2016} (see also \cite{BirbrairFLS:2016}) proved a version of Mumford's Theorem, he showed that if a complex analytic set is Lipschitz regular (see Definition \ref{def:lip_regular}) then it is smooth. 
Thus, since we are interested in the real case, for each positive integer $d$, we have the following question: 
\begin{enumerate}[leftmargin=0pt]
\item[]{\bf Question  BSR($d$)} Let $X\subset \mathbb R^n$ be a real analytic set with dimension $d$. Suppose that $X$ is blow-spherical regular at $0\in X$. Is it true that $X$ is $C^1$ smooth at $0$?
\end{enumerate}

In this article, we prove that above question has a positive answer if and only if $d=1$ (see Corollary \ref{regular} and Example \ref{naosuave_blow}).

On classification, let us remark that the bi-Lipschitz invariance of the multiplicity is an advance about a problem that has been extensively studied in the recent years: the classification of the real analytic surfaces under bi-Lipschitz homeomorphisms. 
Since any subanalytic bi-Lipschitz homeomorphism is a blow-spherical homeomorphism, we believe that the study of the blow-spherical equivalence can help in the bi-Lipschitz classification problem. It was presented in \cite{BirbrairFG:2017} some results on classification of the real analytic surfaces under blow-spherical homeomorphisms. However, the classification of the real analytic curves under blow-spherical homeomorphisms is still not known and this is why we present such a classification here.

Let us describe how this article is organized. 

In Section \ref{preliminaries}, we present the main preliminaries used in this article.

In Section \ref{bs_equiv}, we present the blow-spherical equivalence. We present some examples to emphasize that blow-spherical equivalence is different from some other well-known equivalences studied in Singularity Theory (see Subsection \ref{new_equiv}) and present some properties of that equivalence. 

In Section \ref{section:regularity}, we show that blow-spherical regular analytic curves are $C^1$ (see Corollary \ref{regular}) and we give examples that this does not hold true in other dimensions (see Example \ref{naosuave_blow}). We present also a complete classification of the real analytic curves (see Theorems \ref{curve_classif} and \ref{curve_classif_two}). 

In Section \ref{section:multiplicity}, we prove several results on invariance of the multiplicity. For instance, in Subsection \ref{subsec:blow-spherical-diff}, we prove the blow-spherical differential invariance of the multiplicity ${\rm mod\,} 2$ of real analytic sets (see Theorem \ref{blow-spherical-diff}), which is a generalization of the main result in \cite{Sampaio:2020c}. It is also presented a generalization of Gau-Lipman's Theorem in \cite{Gau-Lipman:1983} (see Theorem \ref{blow-spherical-diff-complex}). In Subsection \ref{subsec:curves}, we prove that the multiplicity ${\rm mod\,} 2$ of real analytic curves is invariant by blow-spherical homeomorphisms (see Proposition \ref{curve}) and as a consequence, we obtain, in Subsection \ref{subsec:foliations}, a result of invariance of multiplicity of real analytic foliations in $\R^2$ (see Corollary \ref{risler_result}). In Subsection \ref{subsec:surfaces}, we prove that the multiplicity ${\rm mod\,} 2$ of real analytic surfaces is invariant by embedded blow-spherical homeomorphisms (see Theorem \ref{surface}). In Subsection \ref{subsec:surfaces-non-embedded}, we prove that the multiplicity ${\rm mod\,} 2$ of real analytic surfaces is invariant by a (not necessary embedded) blow-spherical homeomorphism with an additional hypothesis on symmetry of its spherical blowing up (see Theorem \ref{surface-non-embedded}). Finally, in Subsection \ref{subsec:arc-analytic}, we prove the invariance of the multiplicity ${\rm mod\,} 2$ by (image) arc-analytic blow-spherical homeomorphism (see Theorem \ref{image-arc-Lip}). These results about multiplicity give generalizations of the Valette's results above said (see Remarks \ref{rem:gen_valette_one} and \ref{rem:gen_valette_two}).

\bigskip


\section{Preliminaries}\label{preliminaries}
Here, all real analytic sets are supposed to be pure dimensional.
\begin{definition}
Let $A\subset \R^n$ be a subset such that $x_0\in \overline{A}$.
We say that $v\in \R^n$ is a tangent vector of $A$ at $x_0\in\R^n$ if there is a sequence of points $\{x_i\}\subset A\setminus \{x_0\}$ tending to $x_0\in \R^n$ and there is a sequence of positive numbers $\{t_i\}\subset\R^+$ such that 
$$\lim\limits_{i\to \infty} \frac{1}{t_i}(x_i-x_0)= v.$$
Let $C(A,x_0)$ denote the set of all tangent vectors of $A$ at $x_0\in \R^n$. We call $C(A,x_0)$ the {\bf tangent cone} of $A$ at $x_0$.
\end{definition}
\begin{remark}{\rm It follows from Curve Selection Lemma for subanalytic sets that, if $A\subset \R^n$ is a subanalytic set and $x_0\in \overline{A}$ is a non-isolated point, then the following holds true }
\begin{eqnarray*}
C(A,x_0)=\{v;\, \exists\, \alpha:[0,\varepsilon )\to \R^n\,\, \mbox{s.t.}\,\, \alpha(0)=x_0,\, \alpha((0,\varepsilon ))\subset A\,\, \mbox{and}\,\,\\ 
\alpha(t)-x_0=tv+o(t)\}. 
\end{eqnarray*}
\end{remark}
\begin{definition}
The mapping $\beta _n:\mathbb{S}^{n-1}\times \R^+\to \R^n$ given by $\beta_n(x,r)=rx$ is called {\bf spherical blowing-up} (at the origin) of $\R^n$.
\end{definition}
Note that $\beta _n:\mathbb{S}^{n-1}\times (0,+\infty )\to \R^n\setminus \{0\}$ is homeomorphism with inverse $\beta_n^{-1}:\R^n\setminus \{0\}\to \mathbb{S}^{n-1}\times (0,+\infty )$ given by $\beta_n^{-1}(x)=(\frac{x}{\|x\|},\|x\|)$.
\begin{definition}
The {\bf strict transform} of the subset $X$ under the spherical blowing-up $\beta_n$ is $X':=\overline{\beta_n^{-1}(X\setminus \{0\})}$ and the {\bf boundary} $\partial X'$ of {\bf strict transform} is $\partial X':=X'\cap (\mathbb{S}^{n-1}\times \{0\})$.
\end{definition}
Remark that $\partial X'=C_X\times \{0\}$, where $C_X=C(X,0)\cap \mathbb{S}^{n-1}$.

\subsection{Multiplicity and relative multiplicities}\label{subsection:mult_rel}

Let $X\subset \R^{n}$ be a $d$-dimensional real analytic set with $0\in X$ and 
$$
X_{\C}= V(\mathcal{I}_{\R}(X,0)),
$$
where $\mathcal{I}_{\R}(X,0)$ is the ideal in $\mathbb{C}\{z_1,...,z_n\}$ generated by the complexifications of all germs of real analytic functions that vanish on the germ $(X,0)$. We have that $X_{\C}$ is a germ of a complex analytic set and $\dim_{\C}X_{\C}=\dim_{\R}X$ (see Whitney \cite{Whitney:1957}, p. 546, Theorem 1 and p. 552, Lemma 8 and Lemma 9). Then, for each $(n-d)$-dimensional complex linear subspace $L\subset \C^n$ such that $L\cap C(X_{\C},0) =\{0\}$, there exists an open neighborhood $U\subset \C^n$ of $0$ and a complex analytic subset $\sigma \subset U'=\pi_L(U)$ such that $\# (\pi_L^{-1}(x)\cap X_{\C}\cap U)$ is constant for all point $x\in U'\setminus \sigma$, where $\pi_L\colon\C^{n}\to L^{\perp}$ is the orthogonal projection onto $L^{\perp}$ and $\dim \sigma <d$. The number $\# (\pi_L^{-1}(x)\cap X_{\C}\cap U)$ is the multiplicity of $X_{\C}$ at the origin and it is denoted by $m(X_{\C},0)$. 

\begin{definition}
With the above notation, we define the (real) multiplicity of $X$ at the origin by $m(X):=m(X_{\C},0)$.
\end{definition}

\begin{remark}
For $\pi:=\pi_L|_{\R^n}\colon \R^n\to \pi_L(\R^n)$, we have $m(X)\equiv \# (\pi^{-1}(x)\cap X\cap U) \,{\rm mod\,} 2$, for any $x\in \pi_L(\R^n)\setminus \sigma$.
\end{remark}

\begin{definition}
Let $X\subset \R^n$ be a subanalytic set such that $0\in \overline X$ is a non-isolated point. We say that $x\in\partial X'$ is {\bf a simple point of $\partial X'$}, if there is an open $U\subset \R^{n+1}$ with $x\in U$ such that:
\begin{itemize}
\item [a)] the connected components of $(X'\cap U)\setminus \partial X'$, say $X_1,..., X_r$, are topological manifolds with $\dim X_i=\dim X$, $i=1,...,r$;
\item [b)] $(X_i\cup \partial X')\cap U$ are topological manifolds with boundary. 
\end{itemize}
Let $Smp(\partial X')$ be the set of simple points of $\partial X'$.
\end{definition}
\begin{definition}
Let $X\subset \R^n$ be a subanalytic set such that $0\in X$.
We define $k_X:Smp(\partial X')\to \N$, with $k_X(x)$ is the number of connected components of the germ $(\beta_n^{-1}(X\setminus\{0\}),x)$.
\end{definition}
\begin{remark}
It is clear that the function $k_X$ is locally constant. In fact, $k_X$ is constant in each connected component $C_j$ of $Smp(\partial X')$. Then, we define $k_X(C_j):=k_X(x)$ with $x\in C_j$.
\end{remark}
\begin{remark}\label{density_top}
By Theorems 2.1 and 2.2 in \cite{Pawlucki:1985}, we obtain that ${\rm Smp}(\partial X')$ is an open dense subset of the $(d-1)$-dimensional part of $\partial X'$ whenever $\partial X'$ is a $(d-1)$-dimensional subset, where $d=\dim X$.
\end{remark}
\begin{remark}
The numbers $k_X(C_j)$ are equal to the numbers $n_j$ defined by Kurdyka and Raby \cite{Kurdyka:1989}, pp. 762.
\end{remark}

\begin{definition}
Let $X\subset \R^{n}$ be a real analytic set. We denote by $C_X'$ the closure of the union of all connected components $C_j$ of $Smp(\partial X')$ such that $k_X(C_j)$ is an odd number. We call $C_X'$ the {\bf odd part of $C_X\subset \mathbb{S}^{n}$}.
\end{definition}

\begin{definition}
Let $X\subset \R^{n}$ be a $d$-dimensional real analytic set with $0\in X$ and $\pi:\C^{n}\to \C^{d}$ be a projection such that $\pi^{-1}(0)\cap C(X_{\C},0)=\{0\}$. Let $\pi':\mathbb{S}^n\setminus L\to \mathbb{S}^{d-1}$ given by $\pi'(u)=\frac{\pi(u)}{\|\pi(u)\|}$, where $L=\pi^{-1}(0)$. We define
$$\varphi_{\pi,C_X'}(x):=\#(\pi'^{-1}(x)\cap C_X').$$
In this case, if $\varphi_{\pi,C_X'}(x)$ is constant ${\rm mod\,} 2$ for a generic $x\in\mathbb{S}^{d-1}$, we write $m_{\pi}(C_X'):=\varphi_{\pi,C_X'}(x) {\rm mod\,} 2$, for a generic $x\in\mathbb{S}^{d-1}$.
\end{definition}

\begin{proposition}\label{multcone}
Let $X\subset \R^{n}$ be a $d-$dimensional real analytic set and $0\in X$. Then, $\varphi_{\pi,C_X'}(y)$ is constant for a generic projection $\pi$ and a generic point $y\in\mathbb{S}^{d-1}$. Moreover, $m_{\pi}(C_X')\equiv m(X){\rm mod\,}\,2$.
\end{proposition}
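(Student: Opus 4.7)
The plan is to compute $\#(\pi^{-1}(tx)\cap X\cap U)$ for small $t>0$, a generic $x\in\mathbb{S}^{d-1}$, and a small neighborhood $U$ of the origin, and to show that this cardinality equals $\varphi_{\pi,C_X'}(x)$ modulo $2$. By the remark following the definition of multiplicity, the left side is $m(X)\bmod 2$, so this simultaneously proves generic constancy of $\varphi_{\pi,C_X'}$ (at least mod $2$) and the identity $m_\pi(C_X')\equiv m(X)\bmod 2$.

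First, choose $\pi$ so that $\pi^{-1}(0)\cap C(X_{\C},0)=\{0\}$, and then choose $y\in\mathbb{S}^{d-1}$ generic, in such a way that $(\pi')^{-1}(y)\cap C_X$ consists of finitely many simple points $v_1,\dots,v_r$, each lying in the interior of a connected component of $Smp(\partial X')$. This is possible because $\pi'|_{C_X}$ is a subanalytic map between subanalytic sets of the same dimension $d-1$, hence finite-to-one away from a lower-dimensional subanalytic subset, and because $Smp(\partial X')$ is open and dense in the $(d-1)$-dimensional part of $\partial X'$ by Remark \ref{density_top}.

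Next, at each simple point $v_i$ take a small open neighborhood $W_i\subset\mathbb{S}^{n-1}$ together with a small $\varepsilon>0$ such that $\beta_n^{-1}(X\setminus\{0\})$ has exactly $k_X(v_i)$ connected components inside $W_i\times(0,\varepsilon)$, each a topological $d$-manifold with boundary open in $\partial X'$. The $W_i$ can be taken pairwise disjoint, and every point of $X\setminus\{0\}$ near the origin whose normalized direction lies near $y$ sits over one of the $W_i$. Since $v_i\notin L$, for $0<t\ll 1$ the fiber $\pi^{-1}(ty)$, lifted through $\beta_n$, meets each of the $k_X(v_i)$ branches over $W_i$ transversely at exactly one point. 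Summing,
\begin{equation*}
\#(\pi^{-1}(ty)\cap X\cap U)=\sum_{i=1}^{r}k_X(v_i).
\end{equation*}
Reducing modulo $2$, only the $v_i$ with $k_X(v_i)$ odd survive, and these are exactly the $v_i$ that lie in $C_X'$. Therefore
\begin{equation*}
\varphi_{\pi,C_X'}(y)\equiv\#(\pi^{-1}(ty)\cap X\cap U)\equiv m(X)\pmod{2},
\end{equation*}
which gives both conclusions.

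The main technical obstacle is the transversality step: verifying that each local branch of the strict transform at a simple point $v_i$ contributes exactly one point to $\pi^{-1}(ty)$. This rests on the local product structure of $X'$ at simple points combined with the genericity of $\pi$, which guarantees that $\pi$ restricted to each branch is a local homeomorphism onto a neighborhood of $0$ in $\R^d$, so a small translate of $0$ in the direction $y$ has a unique lift in each branch.
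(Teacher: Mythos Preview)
Your approach is essentially the same as the paper's: compute $\#(\pi^{-1}(ty)\cap X)$ for generic $y$ and small $t$, identify it with $\sum_i k_X(v_i)$ over the simple points $v_i\in(\pi')^{-1}(y)\cap C_X$, and reduce modulo~$2$. Two points deserve attention.

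First, you implicitly assume $\dim C_X=d-1$ when you write that $\pi'|_{C_X}$ is a map ``between subanalytic sets of the same dimension $d-1$''. The paper treats the case $\dim C_X<d-1$ separately: here $C_X'=\emptyset$ and one must argue directly that $\pi(X)$ misses an open cone in $\R^d$, hence $m(X)\equiv 0\bmod 2$. Your displayed equality still gives the right answer (both sides vanish), but the justification you wrote for genericity of $y$ does not apply as stated.

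Second, the counting identity $\#(\pi^{-1}(ty)\cap X\cap U)=\sum_i k_X(v_i)$, which you flag as the main technical obstacle, is exactly where the paper invests the work. The paper does not appeal to a ``local product structure'' at simple points; instead it uses that $\pi|_X$ is a finite branched cover, takes a small solid cone $C_{\eta,\varepsilon}(y)$ in the target missing the branch locus, and observes that $(\pi|_X)^{-1}(C_{\eta,\varepsilon}(y))$ splits into $u$ components each mapped homeomorphically onto $C_{\eta,\varepsilon}(y)$. One then tracks the unique lifts of the ray $t\mapsto ty$ through $\beta_n$ and checks that their limits in $\partial X'$ exhaust the germs of $X'$ at the $v_j$, giving both inequalities $u\le\sum k_X(v_j)$ and $u\ge\sum k_X(v_j)$. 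Your sketch (``$\pi$ restricted to each branch is a local homeomorphism'') is morally this, but as written it does not rule out a branch contributing zero or more than one preimage; the covering argument over a full conical neighborhood is what pins this down.
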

\begin{proof}
Firstly, let us assume that $\dim C_X=d-1$. By Remark \ref{density_top}, $Smp(\partial Y')$ is an open dense subset of the $(d-1)$-dimensional part of $\partial Y'=C_Y\times \{0\}\cong C_Y$.
Let $y\in \mathbb{S}^{d-1}$ be a generic point, $u=\#(\pi^{-1}(ty)\cap X)\equiv m(X) {\rm mod}\,2$, for small enough $t>0$ and $\pi'^{-1}(y)\cap C_X=\pi'^{-1}(y)\cap Smp(\partial X')=\{y_1,...,y_p\}$. Then, we have the following
\begin{equation}
u\equiv \sum\limits _{j=1}^p k_X(y_j).
\end{equation}

In fact, let $\eta,\varepsilon>0$ be small enough numbers such that $C_{\eta,\varepsilon }(y)\cap \pi(br(\pi|_X))=\emptyset $, where $C_{\eta,\varepsilon }(y)=\{v\in \R^d;\, \|v-ty\|\leq \eta t, \,t\in(0,\varepsilon]\}$. Thus, denote the connected components of $(\pi|_X)^{-1}(C_{\eta,\varepsilon }(y))$ by $Y_1,...,Y_u$. Hence, $\pi|_{Y_i}:Y_i\to C_{\eta,\varepsilon }(y)$ is a homeomorphism, for $i=1,...,u$. Thus, for each $i=1,...,u$, there is a unique $\gamma_i\colon (0,\varepsilon)\to Y_i$ such that $\pi(\gamma_i(t))=ty$ for all $t\in (0,\varepsilon)$. We define for each $i=1,...,u$, $\widetilde\gamma_i\colon [0,\varepsilon)\to \overline{\rho^{-1}(Y_i)}$ given by $\widetilde \gamma_i(s)=\lim\limits_{t\to s^+}\rho^{-1}\circ \gamma_i(t)$, for all $s\in [0,\varepsilon)$, where $\rho=\beta_{n}$.

We remark that $\widetilde \gamma_i(0)=\lim\limits _{t\to 0^+}\widetilde \gamma_i(t)\in \{y_1,...,y_p\}$, for all $i=1,...,u$ and, thus, $u\leq \sum\limits _{j=1}^p k_X(y_j)$. Shrinking $\eta$, if necessary, we can suppose that each $C_{Y_i}$ contains at most one $y_j$. Thus, fixed $y_j$ and if $\gamma:[0,\delta)\to X$ is a subanalytic curve such that $\lim\limits _{t\to 0^+}\rho^{-1}\circ \gamma(t)=y_j$, then there exists $\delta_0>0$ such that $\pi(\gamma(t))\in C_{k,\varepsilon }(y)$, for all $t<\delta$. So, there is $i\in \{1,...,u\}$ such that $\gamma(t)\in Y_i$, with $0<t<\delta$. Then, $\widetilde \gamma_i(0)=y_j$ and we obtain the equality $u=\sum\limits _{i=1}^p k_X(y_j)$. Therefore, we obtain
$$
u=\sum\limits _{i=1}^r k_X(C_i)\cdot \#(\pi'^{-1}(y)\cap C_i),
$$
where $C_1,...,C_r$ are the connected components of $Smp(\partial X')$.
Hence, 
$$
\sum\limits _{i=1}^r k_X(C_i)\cdot \#(\pi'^{-1}(y)\cap C_i)\equiv \#(\pi'^{-1}(y)\cap C_X')\, {\rm mod\,} 2
$$
and since $u\equiv m(X)\, {\rm mod\,} 2$, we obtain
$$
m(X)\equiv \#(\pi'^{-1}(y)\cap C_X')\, {\rm mod\,} 2,
$$
for a generic $y\in\mathbb{S}^{d-1}$.

When $\dim C_X<d-1$, we have that $C_X'=\emptyset$ and $\dim C(\pi(X),0)<d$, which implies that there exist $w\in \mathbb{S}^{d-1}$ and small enough numbers $\eta,\varepsilon\in (0,1)$ such that $C_{\eta,\varepsilon }(y)\cap \pi(X)=\emptyset $, where $C_{\eta,\varepsilon }(w)=\{v\in \R^d;\, \|v-tw\|\leq \eta t, \,t\in(0,\varepsilon]\}$. Therefore $\varphi_{\pi,C_X'}(y)=0$ for any point $y\in\mathbb{S}^{d-1}$ and $m(X)\equiv 0\, {\rm mod\,} 2$, since $C_X'=\emptyset$ and $\pi^{-1}(v)\cap Y=\emptyset$, for all $v\in C_{\eta,\varepsilon }(w)$. In particular, $m_{\pi}(C_X')$ is defined and satisfies $0\equiv m_{\pi}(C_X')\equiv m(X)\, {\rm mod\,} 2$.
\end{proof}

\begin{corollary}
Let $X\subset \R^{n}$ be a $d$-dimensional real analytic set and $0\in X$. If $m(X)\equiv 1\, {\rm mod\,} 2$ then $\dim C(X,0)=d$.
\end{corollary}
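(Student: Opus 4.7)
The plan is to argue by contraposition, extracting what is essentially already contained in the second case of the proof of Proposition \ref{multcone}. Since $C_X = C(X,0) \cap \mathbb{S}^{n-1}$ is the link of the (cone) $C(X,0)$, we have the identity $\dim C(X,0) = \dim C_X + 1$ whenever $C(X,0) \neq \{0\}$. So the statement $\dim C(X,0) < d$ is equivalent to $\dim C_X < d-1$ (or $C(X,0) = \{0\}$, which is an even stronger form of the same case).

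Suppose then that $\dim C(X,0) < d$, equivalently $\dim C_X < d-1$. This is exactly the hypothesis handled at the end of the proof of Proposition \ref{multcone}: the odd part $C_X'$ is empty, and for a generic projection $\pi$ one has $m_\pi(C_X') = 0$. By the same Proposition, $m(X) \equiv m_\pi(C_X') \equiv 0 \pmod{2}$. Taking the contrapositive, if $m(X) \equiv 1 \pmod 2$ then $\dim C(X,0) \geq d$.

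For the reverse inequality $\dim C(X,0) \leq d$, I would invoke the fact that the real tangent cone is contained in the complex tangent cone of the complexification: $C(X,0) \subset C(X_{\C},0)$, and $\dim_{\R} C(X_{\C},0) \leq 2\dim_{\C} X_{\C} = 2d$; more precisely, $C(X_{\C},0)$ is a complex analytic cone of complex dimension at most $d$, and since $C(X,0)$ is a real subanalytic set with $\dim C(X,0) \leq \dim X = d$ (a standard fact valid for any subanalytic set, cf.\ the Whitney references cited in the preliminaries), we obtain $\dim C(X,0) \leq d$. Combining both inequalities yields $\dim C(X,0) = d$.

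There is no real obstacle here; the corollary is a one-line consequence of the dichotomy already established inside the proof of Proposition \ref{multcone}, plus the standard upper bound on the dimension of the tangent cone. The only thing worth emphasizing is that the second ``degenerate'' case of that proof was written precisely so that this contrapositive reading is immediate.
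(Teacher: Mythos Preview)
Your proposal is correct and matches the paper's intent: the corollary is stated there without proof, as an immediate consequence of the case split in Proposition~\ref{multcone}, and your contrapositive reading of the second case (``$\dim C_X < d-1 \Rightarrow m(X)\equiv 0$'') together with the standard upper bound $\dim C(X,0)\le d$ is exactly the intended argument. Your paragraph on the reverse inequality is a bit roundabout---the detour through $C(X_{\C},0)$ is unnecessary once you simply cite $\dim C(X,0)\le \dim X$ for subanalytic sets---but the content is fine.
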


\subsection{Euler cycles and allowed paths}

\begin{definition}
An $(n-1)$-dimensional subanalytic set $C$ is said to be an {\bf Euler cycle} if it is a closed set and if, for a stratification of $C$ (and hence for any that refines it), the number of $(n-1)$-dimensional strata containing a give $(n-2)$-dimensional stratum in their closure is even.
\end{definition}

\begin{definition}
Let $k\in\mathbb{N}\cup \{\infty,\omega\}$ and let $C\subset \mathbb{S}^n$ be a subset. We say that a point $p\in C$ is a {\bf $C^k$ regular point of $C$} if there exists a open $U\subset \mathbb{S}^n$ that contains $p$ and $C\cap U$ is a $C^k$ submanifold of $\mathbb{S}^n$. We denote by ${\rm Reg}_{k}(C)$ to be the set of all $C^k$ regular points of $C$. We define also ${\rm Sing}_{k}(C)=C\setminus {\rm Reg}_{k}(C)$.
\end{definition}

\begin{definition}
Let $C\subset \mathbb{S}^n$ be an Euler cycle.
A subanalytic $C^1$ path $\gamma \colon [0, 1] \to \mathbb{S}^n$ is said to be an {\bf allowed path for} $C$ if for every $t\in I_{\gamma} = \{t \in [0, 1];\, \gamma(t) \in C\},$ the point $\gamma(t)$ is a $C^1$ regular point of $C$ at which the mapping $\gamma$ is transverse to $C$. In this case, we define
$
lg_C(\gamma)=\#I_{\gamma}.
$
\end{definition}

For $\lambda, \mu\in \mathbb{S}^n\setminus C$, we define
$$
d_C(\lambda;\mu)=\min\{lg_C(\gamma);\, \gamma \mbox{ allowed path joining }\lambda\mbox{ and }\mu\}.
$$

We define the diameter of an Euler cycle $C\subset \mathbb{S}^2$ as the integer
$$
\delta_C=\sup\{d_C(\lambda;\mu);\, \lambda,\mu\in \mathbb{S}^2\setminus C\}.
$$

We have the following result proved by Valette in \cite{Valette:2010}.
\begin{lemma}[Proposition 2.4 and Theorem 4.1 in \cite{Valette:2010}] \label{diameter_mult}
Let $X\subset \R^3$ be a real analytic surface. Then $C_X'$ is an Euler cycle and $\delta_{C_X'}\equiv m(X)\,{\rm mod\,}\,2$.
\end{lemma}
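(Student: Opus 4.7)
The plan is to prove the two assertions of the lemma separately: first that $C_X'$ is an Euler cycle, and then that $\delta_{C_X'} \equiv m(X) \bmod 2$, with the first feeding into the second via a homological transversality argument.

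\textbf{Euler cycle property.} I would fix a subanalytic stratification of $C_X$ and check the Euler cycle condition at every $0$-dimensional stratum $v$. Take a small round ball $B_\rho(v) \subset \mathbb{S}^2 \times [0,+\infty)$ and form the link $L_v := X' \cap \partial B_\rho(v)$, which is compact and $1$-dimensional by the subanalytic structure. The trace $L_v \cap \partial X'$ decomposes according to the $1$-strata $e_i$ of $C_X$ incident to $v$: near each $e_i$ it contributes exactly $k_X(e_i)$ points, one for each local sheet of $\beta_3^{-1}(X \setminus \{0\})$ approaching $e_i$. The complementary set $L_v \setminus \partial X'$ is a compact $1$-manifold whose boundary lies on $\partial X'$, hence a disjoint union of open arcs and circles. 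Since the total number of boundary endpoints is even, we get $\sum_i k_X(e_i) \equiv 0 \bmod 2$, so the number of $e_i$ with $k_X(e_i)$ odd is even, which is exactly the Euler cycle condition at $v$.

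\textbf{Projection formula and path invariance.} By Proposition \ref{multcone}, for a generic real projection $\pi: \R^3 \to \R^2$ (whose complexified kernel meets $C(X_\C,0)$ only at the origin) and a generic $y \in \mathbb{S}^1$, the open great semicircle $\gamma := \pi'^{-1}(y)$ joins the antipodal poles $\{\pm p\} = \ker\pi \cap \mathbb{S}^2$, is an allowed path for $C_X'$, and satisfies $\#(\gamma \cap C_X') \equiv m(X) \bmod 2$. Using the Euler cycle property, if $\gamma_1, \gamma_2$ are allowed paths with the same endpoints, then $\gamma_1 \ast \gamma_2^{-1}$ is a closed loop whose mod-$2$ algebraic intersection with the cycle $C_X'$ vanishes; hence $lg_{C_X'}(\gamma_1) \equiv lg_{C_X'}(\gamma_2) \bmod 2$. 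Therefore $d_{C_X'}(\lambda,\mu) \bmod 2$ depends only on the components of $\mathbb{S}^2 \setminus C_X'$ that contain $\lambda$ and $\mu$, and we obtain a $\mathbb{Z}/2$-coloring $\epsilon$ of these components with $d_{C_X'}(\lambda,\mu) \equiv \epsilon(\lambda) + \epsilon(\mu) \bmod 2$. In particular the projection identity yields $\epsilon(p) + \epsilon(-p) \equiv m(X) \bmod 2$ for all $p$ outside a thin bad set, so every generic antipodal pair has color-sum equal to $m(X) \bmod 2$.

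\textbf{Computing the sup — the main obstacle.} The last step is to upgrade the antipodal identity to the full supremum. Let $(\lambda_*,\mu_*)$ realize $\delta_{C_X'}$, with $\lambda_* \in \Omega_1, \mu_* \in \Omega_2$. The strategy is to choose a generic projection $\pi$ whose poles $\pm p$ lie in $\Omega_1$ and $\Omega_2$ respectively; since the set of admissible kernels is open and dense and each $\Omega_i$ is a nonempty open subset of $\mathbb{S}^2$, this reduces to the geometric fact that the diameter-realizing pair of components of $\mathbb{S}^2 \setminus C_X'$ admits an antipodal representative. Once this is established, $\delta_{C_X'} = d_{C_X'}(p,-p) \equiv m(X) \bmod 2$ by the previous paragraph. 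The main difficulty is precisely in guaranteeing antipodal realizability: a priori the two maximizing components might both lie in a single open hemisphere. I would address this by exploiting the symmetry forced on $\epsilon$ by the identity $\epsilon(p)+\epsilon(-p) \equiv m(X)\bmod 2$ together with an explicit chamber-counting argument, showing that a generic semicircle actually realizes the graph distance between the components containing its endpoints, and that these endpoints can be adjusted by a generic rotation to hit a maximizing pair.
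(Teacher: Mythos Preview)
The paper does not supply a proof of this lemma at all: it is quoted verbatim as Proposition~2.4 and Theorem~4.1 of Valette~\cite{Valette:2010} and used as a black box. So there is no ``paper's proof'' to compare against; I can only assess your attempt on its own merits.

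Your Parts~1 and~2 are essentially sound. The link argument for the Euler cycle property is the standard one (you should be slightly more careful about why $L_v\setminus\partial X'$ is a $1$-manifold with boundary---this needs that $\rho$ is chosen generically with respect to a stratification of $X'$---but the idea is right). The path-independence step is also correct: once $C_X'$ is a $1$-cycle in $\mathbb S^2$ and $H_1(\mathbb S^2;\mathbb Z/2)=0$, any two allowed paths with common endpoints have the same $lg$ mod~$2$, giving your colouring $\epsilon$ and the identity $\epsilon(p)+\epsilon(-p)\equiv m(X)\bmod 2$ for generic (hence all) antipodal pairs.

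The genuine gap is Part~3. You need $\delta_{C_X'}\equiv m(X)\bmod 2$, i.e.\ $\epsilon(\lambda_*)+\epsilon(\mu_*)\equiv m(X)\bmod 2$ for a pair $(\lambda_*,\mu_*)$ realizing the diameter. Your plan is to find $p$ with $p\in\Omega_1$ and $-p\in\Omega_2$, where $\Omega_i$ are the components containing $\lambda_*,\mu_*$. But this antipodal realizability can fail outright: nothing so far prevents $\Omega_1$ and $\Omega_2$ from both lying in a common open hemisphere, so that $(-\Omega_1)\cap\Omega_2=\emptyset$. The colouring identity $\epsilon(p)+\epsilon(-p)\equiv m(X)$ constrains $\epsilon$ globally, but it does \emph{not} by itself force the diameter-maximizing pair of chambers to be antipodally matched, and the ``explicit chamber-counting argument'' you allude to is not supplied. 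Nor does the triangle inequality help: from $d_{C}(\lambda_*,\mu_*)=\delta_C$ one only gets $d_C(\lambda_*,-\lambda_*)\le 2\delta_C$, not $d_C(\lambda_*,-\lambda_*)=\delta_C$. Valette's Theorem~4.1 handles this step by an argument specific to tangent links of real analytic surfaces (exploiting, in particular, the antipodal symmetry of $C_X'$ itself, which you have not established and which does not follow from your colouring); without that input or a substitute, your proof is incomplete at exactly the point you flagged as the main obstacle.
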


\begin{lemma}[\cite{Valette:2010}, Propositions 2.4, 3.2 and 3.3]\label{path_independence}
Let $X\subset \R^{n}$ be a real analytic hypersurface with $0\in X$. Then, for a generic projection $\pi\colon\R^{n}\to \R^{n-1}$  with $\pi^{-1}(0)\cap \mathbb{S}^n=\{-\lambda, \lambda\}$, $\varphi_{\pi,C_X'}(x)\,{\rm mod\,} 2$ is constant for a generic $x\in\mathbb{S}^{d-1}$ and 
$$m(X)\equiv m_{\pi}(C_X')\equiv d_{C_X'}(\lambda;-\lambda) \,{\rm mod\,} 2.$$ 
\end{lemma}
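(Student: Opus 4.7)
The plan is to split the assertion into the two congruences. The first, $m(X) \equiv m_\pi(C_X') \bmod 2$ together with the generic constancy of $\varphi_{\pi,C_X'}(x) \bmod 2$, is just Proposition \ref{multcone} applied with $d=n-1$, so it requires no further argument. All the work goes into identifying $m_\pi(C_X')$ with the path-distance $d_{C_X'}(\lambda;-\lambda) \bmod 2$.

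My approach is to realize $\varphi_{\pi,C_X'}(x)$ as the crossing number of a very concrete allowed path, and then appeal to an Euler-cycle argument to see that this crossing number mod $2$ depends only on the endpoints. Concretely, for $L=\pi^{-1}(0)$ so that $\mathbb{S}^n \cap L = \{\pm\lambda\}$, I would check that for each $x \in \mathbb{S}^{n-2}$ the preimage $\pi'^{-1}(x)$ is a $C^1$ subanalytic open arc in $\mathbb{S}^n$ whose closure $\gamma_x := \pi'^{-1}(x) \cup \{\pm\lambda\}$ joins $\lambda$ to $-\lambda$ (the meridian in the $2$-plane spanned by $\lambda$ and a lift of $x$). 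Choosing $\pi$ generically, Sard together with subanalytic transversality arguments gives, for generic $x$, that $\gamma_x$ meets $C_X'$ only at $C^1$ regular points and transversally there, so $\gamma_x$ is an allowed path for $C_X'$ with $lg_{C_X'}(\gamma_x) = \varphi_{\pi,C_X'}(x)$. In particular $d_{C_X'}(\lambda;-\lambda) \leq \varphi_{\pi,C_X'}(x)$, and the desired congruence will follow as soon as the parity of $lg_{C_X'}(\gamma)$ over allowed paths joining $\pm\lambda$ is seen to be an invariant.

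The main obstacle is precisely this last point, which amounts to showing that $C_X'$ is an Euler cycle in $\mathbb{S}^n$ (the hypersurface analogue of the surface case in Lemma \ref{diameter_mult}). To handle it I would fix a subanalytic stratification of $\partial X'$ refining its decomposition into the connected components $C_1,\dots,C_r$ of $Smp(\partial X')$. Near any $(n-3)$-dimensional stratum $T$ of $\mathbb{S}^n$ contained in some $\overline{C_j}$, local conical triviality of subanalytic sets exhibits the germ of $\beta_n^{-1}(X\setminus\{0\})$ at $T$ as a finite union of sheets, with $k_X(C_j)$ sheets coming in along each top-dimensional stratum $C_j$ meeting $T$. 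Since the oriented boundary (in the algebraic $\mathbb{Z}$-chain sense) of $\sum_j k_X(C_j)\,\overline{C_j}$ must vanish — this chain is the $\partial$ of the natural top-dimensional chain on $\overline{\beta_n^{-1}(X\setminus\{0\})}$ — reducing modulo $2$ shows that the number of $C_j \subset C_X'$ meeting $T$ is even. Hence $C_X'$ is an Euler cycle.

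Once $C_X'$ is an Euler cycle, the parity-invariance of $lg_{C_X'}(\gamma)$ follows by a standard transversal-deformation argument: any two allowed paths with the same endpoints are connected by a subanalytic homotopy that avoids strata of codimension at least $3$ in $\mathbb{S}^n$, and at each transversal bifurcation across a codimension-$2$ stratum $T$ of $C_X'$ the length $lg_{C_X'}$ changes by the number of $(n-2)$-dimensional strata of $C_X'$ incident to $T$, which is even by the Euler-cycle property. Combining these pieces yields
\[
m(X) \;\equiv\; m_\pi(C_X') \;\equiv\; \varphi_{\pi,C_X'}(x) \;\equiv\; lg_{C_X'}(\gamma_x) \;\equiv\; d_{C_X'}(\lambda;-\lambda) \pmod{2},
\]
which is the claimed chain of congruences.
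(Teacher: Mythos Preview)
The paper does not supply its own proof of this lemma: it is stated purely as a citation of Propositions 2.4, 3.2 and 3.3 of \cite{Valette:2010}, with no argument given. So there is no in-paper proof for you to be compared against; the first congruence is indeed just Proposition~\ref{multcone}, and the rest is Valette's work.

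Your outline is in fact a compressed version of Valette's own arguments: realizing $\varphi_{\pi,C_X'}(x)$ as $lg_{C_X'}$ of the meridian through $\pm\lambda$ and a lift of $x$ is his Proposition~3.3; the Euler-cycle property of $C_X'$ via a boundary-of-a-boundary argument is his Proposition~2.4; and the homotopy/parity-invariance of $lg_{C_X'}$ across codimension-two strata is his Proposition~3.2. One genuine wrinkle in your write-up: invoking the ``oriented boundary in the algebraic $\mathbb{Z}$-chain sense'' of $\sum_j k_X(C_j)\,\overline{C_j}$ is not justified as stated, because neither $X'$ nor the pieces $\overline{C_j}$ need carry compatible orientations in the real analytic setting, so there is no $\mathbb{Z}$-fundamental class whose boundary you can take. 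The argument should be run directly with $\mathbb{Z}/2$-chains (or, equivalently, with the Euler local-parity count), where $C_X'$ is by construction the mod-$2$ boundary of $[X']$ and hence a mod-$2$ cycle. With that correction the sketch is sound, and matches Valette's route rather than offering an alternative one.
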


\subsection{Allowed cycles}
\begin{definition}
Let $C\subset \mathbb{S}^2$ be an $a$-invariant set. We say that an embedding $e\colon\mathbb{S}^1\to C$ is an {\bf allowed embedding}, if $a(Im(e))=Im(e)$ or if there is another embedding $e'\colon\mathbb{S}^1\to C$ such that $e'=a\circ e$ and $Im(e)\cap Im(e')$ is a finite set or empty. A subset $A\subset C$ is called {\bf an allowed set} if there exists a subset $E(A)\subset \{e\colon \mathbb{S}^1\to C; e$ is an embedding$\}$ satisfying the following:
\begin{enumerate}
 \item $A=\bigcup\limits_{e\in E(A)} e(\mathbb{S}^1)$;
 \item if $e_i,e_j\in E(A)$ with $e_i\not =e_j$ then $Im(e_i)\cap Im(e_j)$ is a finite set or empty;
 \item if $e\in E(A)$ with $a(Im(e))\not =Im(e)$ then $a\circ e\in E(A)$.
\end{enumerate}
In this case, we define the {\bf number allowed of circles of $C$} to be $nac(C):=\max \{\# E(A);\, A$ is an allowed set for $C\}$. If $A$ is an allowed set such that $nac(C)=\# E(A)$, we say that $A$ is {\bf a maximal allowed set}. 
\end{definition}
When $X\subset \R^3$ is a real analytic surface, we define {\bf number of allowed circles of $X$} to be $nac(X):=nac(C_X')$.

\begin{remark}\label{finite_max}
Let $C\subset \mathbb{S}^2$ be an $a$-invariant closed subanalytic set. Then $nac(C)<+\infty$.
\end{remark}

\begin{lemma}\label{euler_cycle}
Let $S_1,\cdots,S_r\subset \mathbb{S}^2$ be subanalytic subsets such that each one of them is homeomorphic to $\mathbb{S}^1$. If $S_i\cap S_j$ is a finite set whenever $i\not =j$, then $C=\bigcup\limits_{i=1}^r S_i$ is an Euler cycle.
\end{lemma}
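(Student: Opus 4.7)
The plan is to construct an explicit subanalytic stratification of $C$ and verify the parity condition at every $0$-stratum directly from the fact that each $S_i$ is a topological circle.

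First, note that $C$ is closed in $\mathbb{S}^2$ because each $S_i$, being homeomorphic to $\mathbb{S}^1$, is compact; moreover $C$ is a $1$-dimensional subanalytic set. I would then choose a finite subanalytic set $F \subset C$ containing $\bigcup_{i<j}(S_i \cap S_j)$ (which is finite by hypothesis) and meeting each $S_i$ in at least two points (adding extra points of each $S_i$ if necessary). Declare $F$ to be the set of $0$-strata, and let the $1$-strata be the connected components of $C\setminus F$. Because $F$ contains every pairwise intersection $S_i\cap S_j$, each such component is entirely contained in a single $S_i$, and in fact $S_i\setminus F$ decomposes as a disjoint union of $|F\cap S_i|$ open subanalytic arcs homeomorphic to $(0,1)$. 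These open arcs are $C^\omega$ $1$-submanifolds of $\mathbb{S}^2$, and together with the points of $F$ they form a stratification satisfying the frontier condition.

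To verify the Euler condition, fix $p \in F$ and count the $1$-strata whose closure contains $p$. For each index $i$ with $p \in S_i$, the homeomorphism $S_i \cong \mathbb{S}^1$ implies that $p$ has a neighbourhood in $S_i$ homeomorphic to an open interval, so $S_i \setminus \{p\}$ has exactly two local connected components at $p$. Since $F \cap S_i$ contains a point other than $p$, each of these two local half-branches, followed along $S_i$ away from $p$, first hits $F$ at some point different from $p$, and therefore the two half-branches lie in two distinct arcs of $S_i \setminus F$. Hence each $i$ with $p \in S_i$ contributes exactly $2$ strata to the count at $p$, and summing gives $2\cdot \#\{i : p \in S_i\}$, which is even. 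This shows that $C$ is an Euler cycle.

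The only delicate point is the refinement ensuring that each $S_i$ meets $F$ in at least two points: if some $S_i$ met $F$ only at a single point $p$, then $S_i\setminus \{p\}$ would form a single arc ``looping back'' from $p$ to itself and would contribute $1$ rather than $2$ to the count at $p$. This refinement is harmless because, as noted parenthetically in the definition of an Euler cycle, once the parity condition holds for some stratification it automatically holds for every refinement. No essential geometric input beyond the topological fact ``a circle has two local branches at each of its points'' and the finiteness of pairwise intersections is needed.
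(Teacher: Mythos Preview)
Your proof is correct and follows essentially the same approach as the paper's: the paper simply says to take a stratification in which each $S_i$ carries at least two $0$-dimensional strata and then asserts that the parity check is easy, while you spell out explicitly why (each $S_i$ through a $0$-stratum contributes exactly two adjacent $1$-strata). Your added remark about needing $|F\cap S_i|\geq 2$ to avoid a self-loop counting once is exactly the reason the paper imposes that requirement.
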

\begin{proof}
It is clear that $C$ is a 1-dimensional closed subanalytic subset. Consider a stratification $\mathcal{S}$ of $C$ such that each $S_i$ has at least two $0$-dimensional strata. Since $S_i$ is homeomorphic to $\mathbb{S}^1$, then it is easy to verify that the number of $1$-dimensional strata containing a give $0$-dimensional stratum in their closure is even.
\end{proof}

\begin{lemma}\label{mult_allowed}
Let $S_1,S_2\subset \mathbb{S}^2$ be subanalytic subsets. Suppose that $S_1$ and $S_2$ are homeomorphic to $\mathbb{S}^1$ and $C=S_1\cup S_2$ is an $a$-invariant set. Then for we have the following:
\begin{itemize}
\item [(a)] If $S_1=S_2$, then $m(C)\equiv 1 \,{\rm mod\,} 2$;
\item [(b)] If $S_1\cap S_2$ is a finite set, then $m(C)\equiv 0 \,{\rm mod\,} 2$.
\end{itemize}
\end{lemma}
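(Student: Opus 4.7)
Throughout, view $C$ as giving rise to the real analytic cone $\mathrm{cone}(C)\subset\R^3$, and use Lemma \ref{diameter_mult} / Lemma \ref{path_independence} to write $m(C)\equiv d_C(\lambda;-\lambda)\,{\rm mod\,}2$ for a generic antipodal pair $\{\lambda,-\lambda\}\subset\mathbb{S}^2$. The task then reduces to computing, modulo $2$, the number of transverse intersections of a generic allowed path $\gamma$ from $\lambda$ to $-\lambda$ with $C$.

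\textbf{Proof of (a).} Here $C=S_1$ is a single $a$-invariant subanalytic Jordan curve in $\mathbb{S}^2$. By the Jordan--Schoenflies theorem, $\mathbb{S}^2\setminus C$ consists of two open disks $U_1$ and $U_2$. The antipodal involution $a$ acts on the pair $\{U_1,U_2\}$; I claim it must swap them. Indeed, if $a$ preserved $U_1$, then $a|_{U_1}$ would be a free involution on the contractible space $U_1$, which is impossible by Smith theory (equivalently, a free $\Z/2$-action forces $\chi(U_1)=2\chi(U_1/a)$ to be even, while $\chi(U_1)=1$). Hence $\lambda$ and $-\lambda$ lie in opposite components, so every generic allowed path from $\lambda$ to $-\lambda$ crosses $C$ transversally an odd number of times, giving $m(C)\equiv 1\,{\rm mod\,}2$.

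\textbf{Proof of (b).} Since $S_1\cap S_2$ is finite, Lemma \ref{euler_cycle} shows that $C$ is an Euler cycle. The $a$-invariance of $C$, together with the decomposition into the two topological circle summands $S_1,S_2$, forces $\{a(S_1),a(S_2)\}=\{S_1,S_2\}$, and two subcases arise.

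\emph{Case (i): $a(S_i)=S_i$ for both $i$.} Each $S_i$ is then a single $a$-invariant Jordan curve, so (a) applies to each and a generic allowed $\gamma$ satisfies $\#(\gamma\cap S_i)\equiv 1\,{\rm mod\,}2$. Summing, $\#(\gamma\cap C)\equiv 0\,{\rm mod\,}2$.

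\emph{Case (ii): $a(S_1)=S_2$.} Form the concatenated loop $\widetilde\gamma:=\gamma\star(a\circ\gamma)$ based at $\lambda$ in $\mathbb{S}^2$. Since $\pi_1(\mathbb{S}^2)=0$, $\widetilde\gamma$ is null-homotopic and hence has vanishing mod-$2$ intersection number with the embedded circle $S_1$. But $a$ is a diffeomorphism with $a^{-1}(S_1)=S_2$, so $\#(a\circ\gamma\cap S_1)=\#(\gamma\cap S_2)$, and therefore
\[
0\equiv \#(\widetilde\gamma\cap S_1)\equiv \#(\gamma\cap S_1)+\#(\gamma\cap S_2)=\#(\gamma\cap C)\,{\rm mod\,}2.
\]
In either subcase, $m(C)\equiv 0\,{\rm mod\,}2$.

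\textbf{Main obstacle.} The subtlest point is showing in (a) that $a$ cannot preserve each component of $\mathbb{S}^2\setminus C$; the Smith-theoretic argument above handles it cleanly, but an alternative is to descend to $\R P^2=\mathbb{S}^2/a$ and note that the image of $C$ must represent the nontrivial class in $H_1(\R P^2;\Z/2)$ precisely because its lift is a single circle, forcing that image to be non-separating and hence forcing $a$ to swap the two components upstairs. Case (ii) of (b) requires only the routine transversality check that $a\circ\gamma$ meets $S_1$ transversally, which is automatic since $a$ is a smooth involution of $\mathbb{S}^2$.
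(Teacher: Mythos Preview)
Your proof is correct, but the route differs from the paper's. The paper stereographically projects from a generic $\lambda\in\mathbb{S}^2\setminus C$ to $\R^2$ and counts intersections of rays through the origin with the projected curve; part (a) then reduces to the classical Jordan Curve Theorem (a ray from the inside of a Jordan curve meets it an odd number of times), and in case (ii) of (b) the paper observes that, since $a(S_1)=S_2$ and $-\lambda$ lies inside the bounded component determined by $S_1$, it follows that $\lambda=a(-\lambda)$ lies inside the bounded component determined by $S_2$, so each contributes $1$ modulo $2$. Your approach stays on $\mathbb{S}^2$ and uses $d_C(\lambda;-\lambda)$ directly: the fixed-point-free-involution/Euler-characteristic argument handles (a), and the null-homotopic concatenated loop together with homotopy invariance of the mod-$2$ intersection number handles case (ii). Your argument is conceptually clean and avoids the plane picture, at the cost of invoking slightly heavier tools (Smith theory, mod-$2$ intersection theory); the paper's argument is more elementary but depends on the specific geometry of the stereographic projection. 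One small citation point: the identity $m(C)\equiv d_C(\lambda;-\lambda)\,{\rm mod}\,2$ for an abstract Euler cycle is Lemma~\ref{mult_dist} rather than Lemmas~\ref{diameter_mult} or~\ref{path_independence}, which are stated for $C_X'$ with $X$ real analytic.
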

\begin{proof}
Let $\lambda\in\mathbb{S}^2\setminus C$ be a generic point. Thus, we consider the stereographic projection $p_{\lambda}:\mathbb{S}^2\setminus \{\lambda \}\to \R^2$ and, then, for a generic point $t\in \R^2$, $\pi^{-1}(t)$ is a ray starting at the origin and $\# \pi^{-1}(t)\cap p_{\lambda}(C)\equiv m(C) \,{\rm mod\,} 2$, where $\pi=\pi'_{\lambda}\circ p_{\lambda}^{-1}:\R^2\setminus \{0\}\to \R^2\setminus \{0\}$. For each $i=1,2$ let $B_i$ be the bounded connected component of $\R^2 \setminus p_{\lambda}(S_i)$. 

Let us choose $\lambda$ such that $0$ belongs to bounded component of $B_1$.
By the proof of Jordan Curve Theorem (see \cite{Tverberg:1980}), $\# \pi^{-1}(t)\cap p_{\lambda}(S_1)\equiv 1 \,{\rm mod\,} 2$. Therefore, we have proven the item (a), since $C=S_1$. 

For item (b), we have two cases:

\noindent (1) $a(S_1)=S_1$. In this case, $a(S_2)=S_2$ and by item (a), $m(S_1)\equiv 1 \,{\rm mod\,} 2$ and $m(S_2)\equiv 1 \,{\rm mod\,} 2$. Therefore, $m(C)\equiv 0 \,{\rm mod\,} 2$, since in this case $m(C)\equiv m(S_1) + m(S_2) \,{\rm mod\,} 2$;;

\noindent (2) $a(S_1)\not =S_1$. In this case, $a(S_1)=S_2$ and we have also that $0$ belongs to bounded component of $B_2$. By the proof of Jordan Curve Theorem once again, $\# \pi^{-1}(t)\cap p_{\lambda}(S_1)\equiv 1$ and $\# \pi^{-1}(t)\cap p_{\lambda}(S_2)\equiv 1$. But $\# \pi^{-1}(t)\cap p_{\lambda}(C)= \# \pi^{-1}(t)\cap p_{\lambda}(S_1)+ \# \pi^{-1}(t)\cap p_{\lambda}(S_2)$ and this finishes the proof.
\end{proof}

\begin{lemma}[\cite{Valette:2010}, Proposition 3.3]\label{mult_dist}
Let $C\subset \mathbb{S}^n$ be an Euler cycle, $\pi\colon\R^{n+1}\to \R^n$ be a generic projection and $\pi^{-1}(0)\cap \mathbb{S}^n=\{-\lambda, \lambda\}$. Then $m_{\pi}(C)\equiv \tilde d_C(-\lambda, \lambda)\,{\rm mod\,} 2$. 
\end{lemma}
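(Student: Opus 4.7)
The plan is to realise $\varphi_{\pi,C}(y)$ as the transverse intersection count of a canonical allowed path from $-\lambda$ to $\lambda$, and then deduce the congruence from the path-independence mod $2$ of this count on an Euler cycle.

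First, I would analyse the fibres of $\pi'\colon\mathbb{S}^n\setminus\{\pm\lambda\}\to\mathbb{S}^{n-1}$. Because $\ker\pi=\R\lambda$, for each $y\in\mathbb{S}^{n-1}$ the set $\pi^{-1}(\R_{>0}y)$ is an open half-plane whose boundary is the line $\R\lambda$, and so $(\pi')^{-1}(y)$ is the open semicircle in $\mathbb{S}^n$ whose limit endpoints are $-\lambda$ and $\lambda$. Choosing any continuous parameterization yields a $C^1$ subanalytic path $\gamma_y\colon[0,1]\to\mathbb{S}^n$ with $\gamma_y(0)=-\lambda$, $\gamma_y(1)=\lambda$ and $\gamma_y((0,1))=(\pi')^{-1}(y)$.

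Second, by standard subanalytic transversality applied to a Whitney stratification of $C$, for $y$ in an open full-measure subanalytic subset of $\mathbb{S}^{n-1}$ the path $\gamma_y$ avoids all strata of $C$ of dimension less than $n-1$ and crosses every $(n-1)$-stratum transversely inside $\mathbb{S}^n$; since $\pi$ has been chosen generically, $\pm\lambda\notin C$, so all intersections occur in the interior of $\gamma_y$. Thus $\gamma_y$ is an allowed path for $C$, and by definition
\[
\varphi_{\pi,C}(y)=\#\bigl((\pi')^{-1}(y)\cap C\bigr)=lg_C(\gamma_y).
\]

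Third, I would invoke the path-independence mod $2$ of $lg_C$ on allowed paths with fixed endpoints, which is precisely what the Euler cycle hypothesis delivers. Any two allowed paths from $-\lambda$ to $\lambda$ are joined by a generic one-parameter subanalytic family $\gamma_s$; the parity of $lg_C(\gamma_s)$ can change only at isolated values of $s$ where $\gamma_s$ passes through an $(n-2)$-dimensional stratum of $C$, and at such a moment the jump equals the number of $(n-1)$-strata incident to that stratum, which is even by the Euler hypothesis. In particular $lg_C(\gamma_y)\equiv \tilde d_C(-\lambda;\lambda)\,({\rm mod\,}2)$, since the right-hand side is realised by some allowed path joining the two points. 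This simultaneously shows that $\varphi_{\pi,C}(y)\,{\rm mod}\,2$ is independent of the generic $y$, so $m_\pi(C)$ is well defined and equals $\tilde d_C(-\lambda;\lambda)\,{\rm mod}\,2$. The main technical obstacle is the bordism argument for the path-independence statement (carried out in the complement of the codimension-two skeleton of $C$); once that is in place, the remainder of the proof is essentially formal.
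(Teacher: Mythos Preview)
The paper does not give its own proof of this lemma: it is simply quoted from \cite{Valette:2010}, Proposition 3.3, and used as a black box. So there is nothing in the present paper to compare your argument against.

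That said, your sketch is correct and is essentially the standard argument one would expect (and, to the extent one can infer from the citation, the one Valette gives). The identification of $(\pi')^{-1}(y)$ with the open great semicircle in $\mathbb{S}^n$ from $-\lambda$ to $\lambda$ is the right geometric observation, and it immediately gives $\varphi_{\pi,C}(y)=lg_C(\gamma_y)$ for a generic $y$. The remaining step---that $lg_C(\gamma)$ is independent of the allowed path $\gamma$ mod $2$ when $C$ is an Euler cycle---is exactly the content of Valette's Proposition 3.2, which the present paper also invokes (see Lemma~\ref{path_independence}); your bordism sketch of this step is accurate. One small remark: the lemma is stated with $\tilde d_C$ (the almost-allowed-path distance), while your argument produces $d_C$; but the paper's Remark~\ref{key_remark} records that $\tilde d_C=d_C$, so there is no discrepancy.
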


\section{Blow-spherical equivalence}\label{bs_equiv}
\begin{definition}
Let $X\subset\R^n$ and $Y\subset\R^p$ be two subsets containing respectively the origin of $\R^n$ and $\R^p$. 
\begin{itemize}[leftmargin=*]
\item A continuous mapping $\varphi\colon (X,0)\to (Y,0)$, with and $0\not\in \varphi(X\setminus \{0\})$, is a {\bf blow-spherical morphism} (shortened as {\bf blow-morphism}), if the mapping
$$
\beta_p^{-1}\circ \varphi\circ\beta_n\colon X'\setminus \partial X'\to Y'\setminus \partial Y'
$$
extends as a continuous mapping $\varphi'\colon X'\to Y'$.
\item A {\bf blow-spherical homeomorphism} (shortened as {\bf blow-isomorphism}) is a blow-morphism $\varphi\colon (X,0)\to (Y,0)$ such that the extension $\varphi'$ is a homeomorphism. In this case we say that the germs $(X,0)$ and $(Y,0)$ are said {\bf blow-spherical equivalent} or {\bf blow-spherical homeomorphic} (or {\bf blow-isomorphic}). 
\end{itemize}
When $\varphi\colon (X,0)\to (Y,0)$ is a  blow-spherical homeomorphism, we denote by $\nu_{\varphi}\colon C_X\to C_Y$ the homeomorphism such that $\varphi'(x,0)=(\nu_{\varphi}(x),0)$ for all $(x,0)\in \partial X'$.
\end{definition}

\begin{remark}
{\rm We have the following.
\begin{enumerate}
\item $id\colon X\to X$ is a blow-spherical homeomorphism for any $X\subset\R^n$ with $0\in X$;
\item Let $X\subset\R^n$, $Y\subset\R^p$ and $Z\subset\R^k$ be subsets containing respectively the origin of $\R^n$, $\R^p$ and $\R^k$. If $f\colon (X,0)\to (Y,0)$ and $g\colon (Y,0)\to (Z,0)$ are blow-spherical morphisms then $g\circ f\colon (X,0)\to (Z,0)$ is a blow-spherical morphism.
\end{enumerate}}
\end{remark}

Thus, we have a category, denoted here by ${\rm BS}_0$, such that their objects are all subsets of Euclidean spaces that contain the origin and their morphisms are all blow-spherical morphisms.

We have also the following result.
\begin{theorem}\label{multiplicities}
Let $\varphi\colon (X,0)\to (Y,0)$ be a blow-spherical homeomorphism. Then, $\varphi'(Smp(\partial X'))=Smp(\partial Y')$ and $k_X(v)=k_{Y}(\varphi'(v))$ for all $v\in Smp(\partial X')$. In particular, $\varphi'(C_X')=C_Y'$.
\end{theorem}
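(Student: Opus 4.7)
The plan is to reduce the statement to the fact that $\varphi'$ is a homeomorphism from the strict transform $X'$ onto $Y'$ that carries $\partial X'$ onto $\partial Y'$, and then to invoke topological invariance of ``manifold'' and ``manifold with boundary.''

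First I would verify that $\varphi'(\partial X') = \partial Y'$, so that $\varphi'$ restricts to a homeomorphism $X' \setminus \partial X' \to Y' \setminus \partial Y'$ and to the homeomorphism $(\nu_\varphi, 0)\colon \partial X' \to \partial Y'$. Using the formula $\beta_n^{-1}(x)=(x/\|x\|,\|x\|)$ and $\varphi(0)=0$, any sequence $(x_i,r_i)\in X'\setminus\partial X'$ with $r_i\to 0$ satisfies $\beta_n(x_i,r_i)=r_ix_i\to 0$, hence $\varphi'(x_i,r_i)=(y_i,s_i)$ with $s_i=\|\varphi(r_ix_i)\|\to 0$. By continuity the extension $\varphi'$ then sends the locus $\{r=0\}$ into the locus $\{s=0\}$, and the analogous argument for $(\varphi')^{-1}$ gives equality.

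Next I would observe that the simple-point condition at $v\in\partial X'$ is an intrinsic topological property of the pair $(X',\partial X')$ near $v$: if $U\subset\mathbb{R}^{n+1}$ witnesses conditions (a) and (b), then $W:=X'\cap U$ is a relative neighborhood of $v$ in $X'$ on which the components of $W\setminus\partial X'$ are topological manifolds of dimension $\dim X$ and each such component unioned with $\partial X'\cap W$ is a topological manifold with boundary; conversely, any relative neighborhood with these intrinsic properties can be realised as $X'\cap U$ for some open $U\subset\mathbb{R}^{n+1}$. Hence, for $v\in\mathrm{Smp}(\partial X')$, transporting $W$ by $\varphi'$ gives a relative neighborhood $\varphi'(W)$ of $\varphi'(v)$ in $Y'$ whose components modulo $\partial Y'$ and their unions with $\partial Y'\cap\varphi'(W)$ inherit the same manifold/manifold-with-boundary structure. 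Since dimension is a topological invariant of subanalytic sets (so $\dim Y = \dim Y' = \dim X' = \dim X$), this is precisely the simple-point property for $Y$ at $\varphi'(v)$. Symmetry with $(\varphi')^{-1}$ yields $\varphi'(\mathrm{Smp}(\partial X'))=\mathrm{Smp}(\partial Y')$.

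For the equality $k_X(v)=k_Y(\varphi'(v))$, I would use that $k_X(v)$ is by definition the number of connected components of the germ of $\beta_n^{-1}(X\setminus\{0\})=X'\setminus\partial X'$ at $v$, and that $\varphi'$ restricts to a germ-homeomorphism $(X'\setminus\partial X',v)\to (Y'\setminus\partial Y',\varphi'(v))$, which preserves this count. Finally, $\varphi'$ sends each connected component $C_j$ of $\mathrm{Smp}(\partial X')$ onto a connected component of $\mathrm{Smp}(\partial Y')$, and the locally constant functions satisfy $k_X=k_Y\circ\varphi'$; therefore odd-parity components correspond to odd-parity components, and since $\varphi'$ is a homeomorphism it preserves closures, giving $\varphi'(C_X')=C_Y'$.

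I expect the only delicate step to be the first paragraph, namely confirming that the continuous extension sends $\partial X'$ into $\partial Y'$ rather than into the interior of $Y'$; once the pair $(X',\partial X')$ is known to be homeomorphic to $(Y',\partial Y')$ as pairs, the remainder is a routine transfer of purely topological properties across the homeomorphism $\varphi'$.
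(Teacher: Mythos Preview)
Your approach is correct and is essentially the same as the paper's: once one knows that $\varphi'$ is a homeomorphism of pairs $(X',\partial X')\to(Y',\partial Y')$, the simple-point condition and the germ component count transfer topologically. The paper's own proof is terser---it takes for granted that $\varphi'|_{\partial X'}\colon\partial X'\to\partial Y'$ is a homeomorphism (this is implicit in the definition of $\nu_\varphi$) and does not spell out why the simple-point conditions (a) and (b) are intrinsic to the pair; you supply both of these details explicitly, which is an improvement in rigor rather than a difference in strategy.
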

\begin{proof}
Let $v\in Smp(\partial X')$ be a point and let $U \subset X'$ be a small neighborhood of $v$. Since $\varphi'\colon X'\to Y'$ is a homeomorphism, we have that $V=\varphi'(U)$ is a small neighborhood of $\varphi'(v)\in \partial Y'$. Moreover, $\varphi'(U\setminus \partial X')=V\setminus \partial Y'$, since $\varphi'|_{\partial X'}\colon\partial X'\to \partial Y'$ is a homeomorphism, as well. Using once more that $\varphi'$ is a homeomorphism, we obtain that the number of connected components of $U\setminus \partial X'$ is equal to the number of connected components of $V\setminus \partial Y'$, showing that $k_X(v)=k_{Y}(\varphi'(v))$ for all $v\in Smp(\partial X')$. 
In particular, we obtain that $C_Y'=\varphi'(v)(C_X')$. 
\end{proof}

In Sections \ref{section:regularity} and \ref{section:multiplicity}, we give several applications of Theorem \ref{multiplicities}.


\subsection{Blow-spherical equivalence as a new equivalence}\label{new_equiv}
In this Subsection, we show that Blow-Spherical equivalence is different from some other equivalences studied in Singularity Theory.

Let ${\rm Lip}_{0,outer}$ (resp. ${\rm Lip}_{0,inner}$) be the subcategory of ${\rm Lip}$, which their objects are all subsets of Euclidean spaces that contain the origin endowed with the induced metric (resp. intrinsic metric) and their morphisms are all Lipschitz mappings with respect the induced metric (resp. intrinsic metric) such that its inverse image of the origin of the target is only the origin of the source.

Let ${\rm Top}_0$ be the subcategory of ${\rm Top}$ which their objects are all subsets of Euclidean spaces that contain the origin and their morphisms are all continuous mappings such that its inverse image of the origin of the target is only the origin of the source.

\begin{example}
It is clear that ${\rm BS}_0$ is a subcategory of ${\rm Top}_0$.
$X=\{(x,y)\in\R^2;\, y=0\}$ and $Y=\{(x,y)\in\R^2;\, y^2=x^3\}$ are homeomorphic and by Theorem \ref{regular}, $X$ and $Y$ are not blow-spherical homeomorphic, since $X$ is not a $C^1$ submanifold of $\R^2$. Thus, ${\rm BS}_0\not= {\rm Top}_0$.
\end{example}

The above example also shows that blow-spherical equivalence is also different from blow-analytic equivalence (see the definition in \cite{Kuo:1985}), since $\{(x,y)\in\R^2;\, y=0\}$ and $\{(x,y)\in\R^2;\, y^2=x^3\}$ are blow-analytic equivalents (see \cite{KobayashiK:1998}).


%

\begin{definition}
For each rational number $\beta\in [1,+\infty )$, we define $X_{\beta}=\{(x,y,z)\in\R^3;\, x^2+y^2=z^{2\beta}\}$ and $X_{\beta}^{\pm}=\{(x,y,z)\in X_{\beta}; \pm z\geq 0\}$.
\end{definition}
We remark that for each $\beta\in [1,+\infty )$, $X_{\beta}^{+}$ is (outer) bi-Lipschitz homeomorphic to $X_{\beta}^{-}$. This implies that if $X_{\beta_1}$ and $X_{\beta_2}$ are inner bi-Lipschitz homeomorphic then $X_{\beta_1}^{+}$ and $X_{\beta_2}^{+}$ are inner bi-Lipschitz homeomorphic as well. However, it is known that $X_{\beta_1}^+$ and $X_{\beta_2}^+$ are not inner bi-Lipschitz homeomorphic whenever $\beta_1\not=\beta_2$ (see \cite{Birbrair:2008}).

\begin{example}\label{blow-iso_no_bi-Lip}
Let $\beta_1,\beta_2\in (1,+\infty )$ be two different rational numbers. Then, the mapping $\varphi\colon X_{\beta_1}\to X_{\beta_2}$ given by $\varphi(x,y,z)=(x,y,{\rm sign}(z) |z|^{\frac{\beta_1}{\beta_2}})$ is a blow-spherical homeomorphism, but $X_{\beta_1}$ and $X_{\beta_2}$ are not inner bi-Lipschitz homeomorphic. In particular, ${\rm BS}_0\not= {\rm Lip}_{0,inner}$ and ${\rm BS}_0\not= {\rm Lip}_{0,outer}$.
\end{example}
In fact, we can find an example with an embedded blow-spherical homeomorphism.
\begin{example}\label{blow-iso_no_bi-Lip_two}
By taking $\beta_1=\frac{5}{2}$ and $\beta_2=\frac{7}{2}$ in Example \ref{blow-iso_no_bi-Lip}, the mapping $\varphi\colon (\R^3,0)\to (\R^3,0)$ given by
$\varphi(x,y,z)=(xg(x,y,z),yg(x,y,z),z)$
is a blow-spherical homeomorphism such that $\varphi(X_{\beta_1})=X_{\beta_2}$,
where 
$$
g(x,y,z)=\left\{\begin{array}{ll}
                        1,&\mbox{ if } z^3\leq x^2+y^2\\
                        \frac{x^2+y^2 +z^4}{z^3(z+1)},&\mbox{ if } z^5<x^2+y^2<z^3\\
                        z,&\mbox{ if }  x^2+y^2\leq z^5.
                      \end{array}\right.
$$
\end{example}

In \cite{KoikeP}, the authors presented the following definition.
\begin {definition}\label{semiline}
We say that a homeomorphism $h : (\R^n,0) \to (\R^n,0)$ satisfies
condition {\em semiline}-(SSP), if $h(\ell)$ has a unique direction
for all semilines $\ell$.
\end{definition}
Here, we say that such an $h$ is a {\bf semiline homeomorphism}. 

\begin{example}
The definition of blow-spherical homeomorphism is intrinsic, but a semiline homeomorphism have to be defined in some open neighborhood of $0$. By Theorem 6.1 in \cite{Sampaio:2020}, $X=\{(x,y)\in \C^2;y^2=x^3\}$ and $Y=\{(x,y)\in \C^2;y^2=x^5\}$ are blow-spherical homeomorphic, however, there is no semiline homeomorphism $h\colon (\C^2,0)\to (\C^2,0)$ such that $h(X)=Y$. 
\end{example}

\section{Regularity and classification of real analytic curves}\label{section:regularity}
\subsection{Blow-spherical regularity of real analytic sets}

\begin{definition}\label{def:bs_regular}
A subset $X\subset\R^n$ is called {\bf blow-spherical regular} at $0\in X$ if there is an open neighborhood $U\subset\R^n$ of $0$ such that $X\cap U$ is blow-spherical homeomorphic to an Euclidean ball. 
\end{definition}

\begin{definition}\label{def:lip_regular}
A subset $X\subset\R^n$ is called {\bf Lipschitz regular} (resp. {\bf $C^k$ regular}) at $x_0\in X$ if there is an open neighborhood $U\subset\R^n$ of $x_0$ such that $X\cap U$ is bi-Lipschitz homeomorphic to an Euclidean ball (resp. $X\cap U$ is a $C^k$ submanifold of $\R^n$), where $k\in \mathbb N \cup \{\infty,\omega\}$. 
\end{definition}

\begin{definition}
Let $X\subset \mathbb{R}^n$ and $Y\subset \mathbb{R}^m$ be closed subsets and let $k\in \mathbb{N}\cup \{\infty,\omega\}$. We say that a mapping $f:X\to Y$ is $C^k$ (resp. differentiable at $x\in X$), if there exist an open $U\subset \mathbb{R}^n$ and a mapping $F:U\to \mathbb{R}^m$ such that $x\in U$, $F|_{X\cap U}=f|_{X\cap U}$ and $F$ is $C^k$ (resp. differentiable at $x$). 
\end{definition}

\begin{proposition}\label{analytic_equiv}
Let $X$ and $Y$ be two real analytic sets and let $\varphi\colon (X,0)\to (Y,0)$ be a real analytic diffeomorphism. Then $m(X)=m(Y)$.
\end{proposition}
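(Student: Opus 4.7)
The plan is to use the definition $m(X)=m(X_\C,0)$ via complexification. If I can show that a real analytic diffeomorphism $\varphi\colon (X,0)\to (Y,0)$ complexifies to an isomorphism of complex analytic germs $(X_\C,0)\simeq (Y_\C,0)$, then the result follows from the standard biholomorphic invariance of the multiplicity of complex analytic germs (the multiplicity is the Hilbert--Samuel multiplicity of the local analytic ring, which is intrinsic; cf.\ Chirka \cite{Chirka:1989}).

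To produce the biholomorphism, I would first extend $\varphi$ and $\varphi^{-1}$ from the sets to ambient real analytic maps $F\colon (\R^n,0)\to (\R^m,0)$ and $G\colon (\R^m,0)\to (\R^n,0)$, using the definition of real analyticity of a map on a closed set given in the preliminaries. Complexifying term-by-term in a power series neighborhood of $0$ yields holomorphic germs $\tilde F,\tilde G$. I then verify that $\tilde F$ sends $(X_\C,0)$ into $(Y_\C,0)$: since $Y_\C=V(\mathcal I_\R(Y,0))$, it suffices to check that $h\circ \tilde F$ vanishes on $X_\C$ for every generator $h$ of $\mathcal I_\R(Y,0)$. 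Such an $h$ satisfies $(h\circ \tilde F)|_X=h\circ\varphi\equiv 0$. Writing $(h\circ \tilde F)|_{\R^n}=u+iv$ with $u,v$ real analytic functions of real variables that both vanish on $X$, their complexifications $\tilde u,\tilde v$ lie in $\mathcal I_\R(X,0)$ and therefore vanish on $X_\C=V(\mathcal I_\R(X,0))$; by uniqueness of the holomorphic extension, $h\circ \tilde F=\tilde u+i\tilde v$ vanishes on $X_\C$. The symmetric argument gives $\tilde G(Y_\C)\subset X_\C$, and applying the same identity-principle reasoning to the coordinate components of $\tilde G\circ \tilde F-\mathrm{id}$ (which vanish on $X$) shows $\tilde F|_{X_\C}$ and $\tilde G|_{Y_\C}$ are mutually inverse holomorphic germ maps.

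The main technical step is exactly this identity-principle argument: one needs that $X$ is Zariski-dense in its complexification $X_\C$, i.e.\ that a holomorphic germ whose restriction to $\R^n$ vanishes on $X$ must lie in $\mathcal I_\R(X,0)$. This is really just the separation into real and imaginary parts along $\R^n$ together with the defining property of the complexification recalled from Whitney in the preliminaries, so it is not a serious obstacle. Once the biholomorphism of germs $(X_\C,0)\simeq (Y_\C,0)$ is established, the intrinsic nature of the multiplicity of a complex analytic germ gives $m(X_\C,0)=m(Y_\C,0)$, hence $m(X)=m(Y)$.
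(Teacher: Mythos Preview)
Your proposal is correct and follows exactly the same route as the paper: complexify the real analytic diffeomorphism to a biholomorphism $(X_\C,0)\to(Y_\C,0)$ and then invoke the biholomorphic invariance of multiplicity from Chirka \cite{Chirka:1989}. The paper's proof simply asserts in one line that the complexification $\varphi_\C$ is a complex diffeomorphism between $X_\C$ and $Y_\C$, whereas you have spelled out the mechanism (ambient extension, term-by-term complexification, and the identity-principle check that $\tilde F(X_\C)\subset Y_\C$ and $\tilde G\circ\tilde F|_{X_\C}=\mathrm{id}$), so your write-up is more detailed but not a different argument.
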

\begin{proof}
 We have that the complexification of $\varphi$, denoted by $\varphi_{\C}$, is a complex diffeomorphism between $X_{\C}$ and $Y_{\C}$. Thus, by Proposition in (\cite{Chirka:1989}, Section 11, p. 120), $m(X_{\C},0)=m(Y_{\C},0)$. Therefore, $m(X)=m(Y)$.
\end{proof}

In fact, we can obtain a stronger version of the above result.
\begin{proposition}\label{smooth_equiv}
Let $X$ and $Y$ be two real analytic sets and let $\varphi\colon (X,0)\to (Y,0)$ be a $C^{\infty }$ diffeomorphism. Then $m(X)=m(Y)$.
\end{proposition}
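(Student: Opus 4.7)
The plan is to reduce to Proposition~\ref{analytic_equiv} by passing to the tangent cones of the complexifications, using the invariance of complex multiplicity under the tangent-cone operation. This is a natural generalization because, although a $C^{\infty}$ diffeomorphism cannot be complexified directly (as a real analytic map can), its first-order jet at the origin is a linear map whose complexification is a biholomorphism between the tangent cones.

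\smallskip

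\noindent\textbf{Step 1 (Extension and linearization).} By the definition of $C^{\infty}$ map recalled in the section, $\varphi$ and $\varphi^{-1}$ extend to $C^{\infty}$ maps $\Phi\colon(U,0)\to(\R^m,0)$ and $\Psi\colon(V,0)\to(\R^n,0)$ on open neighborhoods of the origin, with $\Phi|_{X\cap U}=\varphi$ and $\Psi|_{Y\cap V}=\varphi^{-1}$. Set $L:=d\Phi(0)$ and $M:=d\Psi(0)$.

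\smallskip

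\noindent\textbf{Step 2 (Induced linear bijection of tangent cones).} A first order Taylor expansion shows that if $x_i\in X\setminus\{0\}$ tends to $0$ with $x_i/\|x_i\|\to v/\|v\|$, then $\Phi(x_i)=Lv\cdot \|x_i\|/\|v\|+o(\|x_i\|)$, so the images $\varphi(x_i)\in Y$ approach $0$ along the direction $Lv$. A symmetric argument using $\Psi$ shows $L|_{C(X,0)}$ has no kernel, otherwise $\Psi\circ\Phi$ could not recover $x_i$ to first order. Hence $L$ restricts to a linear bijection $C(X,0)\to C(Y,0)$ and therefore extends to an $\R$-linear isomorphism between the real linear spans of these cones.

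\smallskip

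\noindent\textbf{Step 3 (Complex biholomorphism of tangent cones of complexifications).} For a real analytic germ $(X,0)$, one has $C(X_{\C},0)=(C(X,0))_{\C}$, where the right side denotes the Zariski closure of the real tangent cone in $\C^n$ (equivalently, the complexification of its defining real analytic ideal); the same identification holds for $Y$. The complexification $L_{\C}$ of the linear map of Step 2 is a $\C$-linear isomorphism between the ambient complex linear spans that restricts to a biholomorphism of complex analytic germs
\begin{equation*}
L_{\C}\colon (C(X_{\C},0),0)\;\xrightarrow{\ \sim\ }\;(C(Y_{\C},0),0).
\end{equation*}

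\smallskip

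\noindent\textbf{Step 4 (Conclusion via invariance of multiplicity under tangent cone).} A linear biholomorphism preserves multiplicity of complex analytic germs (by the same Chirka reference used to prove Proposition~\ref{analytic_equiv}), so
\begin{equation*}
m(C(X_{\C},0),0)=m(C(Y_{\C},0),0).
\end{equation*}
On the other hand, for any complex analytic germ $(Z,0)$ the Hilbert--Samuel multiplicity is invariant under passing to the tangent cone: $m(Z,0)=m(C(Z,0),0)$ (this is a standard consequence of the identification of the associated graded ring of $\mathcal{O}_{Z,0}$ with the coordinate ring of $C(Z,0)$). Applying this twice and combining with the previous equality gives
\begin{equation*}
m(X)=m(X_{\C},0)=m(C(X_{\C},0),0)=m(C(Y_{\C},0),0)=m(Y_{\C},0)=m(Y).
\end{equation*}

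\smallskip

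\noindent\textbf{Main obstacle.} The delicate point is Step 3, specifically the identification $C(X_{\C},0)=(C(X,0))_{\C}$ and the need to ensure that the jet $L$ at $0$, which a priori depends on the extension $\Phi$, genuinely yields a well-defined, bijective linear map between the spans of the tangent cones independent of the chosen extension. This rests on the fact that two $C^{\infty}$ extensions of $\varphi$ differ by a map whose first-order Taylor polynomial vanishes on $C(X,0)$, which is a standard but technical verification using the subanalytic characterization of $C(X,0)$ recalled in the Preliminaries.
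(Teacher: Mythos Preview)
Your Step~3 contains a genuine error: the identification $C(X_{\C},0)=(C(X,0))_{\C}$ is false in general. Take $X=\{(x,y,z)\in\R^3:\,x^2+y^2=z^3\}$, a pure $2$-dimensional real analytic set. Its real tangent cone $C(X,0)$ is the half-line $\{(0,0,c):c\geq 0\}$, whose Zariski closure in $\C^3$ (or complexification in any reasonable sense) is the complex line $\{x=y=0\}$. On the other hand $X_{\C}=\{x^2+y^2=z^3\}\subset\C^3$ has tangent cone $C(X_{\C},0)=\{x^2+y^2=0\}$, a union of two complex $2$-planes. These are not even equidimensional. The underlying phenomenon is that the real tangent cone may have strictly smaller dimension than $X$ (here $1<2$), while the complex tangent cone never does. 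Consequently, the linear map $L$ you extract in Step~2 is determined only on the $\R$-span of $C(X,0)$ --- the $z$-axis in this example --- and there is no canonical extension of $L_{\C}$ to a map carrying $C(X_{\C},0)$ to $C(Y_{\C},0)$.

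The paper's proof bypasses this obstacle entirely: it invokes Ephraim's theorem (Proposition~1.1 in \cite{Ephraim:1973}) that two $C^{\infty}$-diffeomorphic real analytic germs are in fact real analytically diffeomorphic, and then applies Proposition~\ref{analytic_equiv}. Note that a first-order linearization at the origin is precisely the information exploited in Theorem~\ref{blow-spherical-diff} and Corollary~\ref{gen_real-gau-lipman}, and there it yields only $m(X)\equiv m(Y)\,{\rm mod}\,2$; obtaining the exact equality $m(X)=m(Y)$ genuinely requires more than the $1$-jet, and Ephraim's passage to an analytic equivalence is what supplies it.
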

\begin{proof}
By Proposition 1.1 in \cite{Ephraim:1973}, $(X,0)$ and $(Y,0)$ are real analytic diffeomorphic and by Proposition \ref{analytic_equiv}, $m(X)=m(Y)$.
\end{proof}
Thus, we obtain the following result.
\begin{proposition}\label{mult_um}
Let $X\subset \R^n$ be a real analytic set. Then, the below statements are equivalent. 
\begin{itemize}
\item [(1)] $m(X)=1$;
\item [(2)] $X$ is $C^{\omega }$ regular at $0$;
\item [(3)] $X$ is $C^{\infty }$ regular at $0$.
\end{itemize}
\end{proposition}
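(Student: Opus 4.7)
The plan is to establish the chain $(2) \Rightarrow (3) \Rightarrow (1) \Rightarrow (2)$. The implication $(2) \Rightarrow (3)$ is immediate, and $(3) \Rightarrow (1)$ follows directly from Proposition \ref{smooth_equiv}: a $C^{\infty}$-regular germ $(X,0)$ of dimension $d$ is $C^{\infty}$-diffeomorphic to the germ of an open ball in $\R^d$, which is a real analytic set of multiplicity one, so $m(X)=1$.

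For the substantive direction $(1) \Rightarrow (2)$, I would begin by passing to the complexification. By definition $m(X_{\C},0)=m(X)=1$, and the classical fact that a complex analytic germ of multiplicity one is smooth (see Chirka \cite{Chirka:1989}, Section 11) gives that $X_{\C}$ is a complex analytic submanifold of $\C^n$ at $0$, of complex dimension $d=\dim_{\R}X$. The task is then to descend this smoothness back to the real analytic set $X$.

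I would pick real analytic generators $g_1,\ldots,g_m$ of the ideal of germs of real analytic functions vanishing on $X$ at $0$; then their complexifications $\tilde g_1,\ldots,\tilde g_m$ generate $\mathcal{I}_{\R}(X,0)$ in $\C\{z\}$. Because $X_{\C}$ is a smooth complex germ of dimension $d$ at $0$, the conormal $\C$-vector space spanned by the differentials $d\tilde g_i|_0$ in $T_0^*\C^n$ has dimension exactly $n-d$, so by basic linear algebra I may assume, after relabeling, that $d\tilde g_1|_0,\ldots,d\tilde g_{n-d}|_0$ are $\C$-linearly independent. The elementary bridge to the real setting is the observation that any $\C$-linear functional on $\C^n$ vanishing on $\R^n$ vanishes identically; hence $\C$-linear independence of the complexified differentials forces $\R$-linear independence of the real differentials $dg_1|_0,\ldots,dg_{n-d}|_0$ viewed as functionals on $\R^n$.

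To conclude, the real analytic implicit function theorem gives that $M:=V(g_1,\ldots,g_{n-d})$ is a real analytic submanifold of $\R^n$ of dimension $d$ near $0$, and $X\subseteq M$ locally since the $g_j$ vanish on $X$. Because $X$ is pure $d$-dimensional (the standing assumption in Section \ref{preliminaries}) and $M$ is locally diffeomorphic to $\R^d$, hence locally irreducible of dimension $d$, $X$ must coincide with $M$ in a neighborhood of $0$, so $X$ is $C^{\omega}$ regular at $0$. The step I expect to be the main obstacle is the compatibility between the complex and real conormal data in the third paragraph, namely verifying that a basis of the complex conormal space of $X_{\C}$ can be extracted from complexifications of real generators, and that the resulting real differentials remain independent; both points rest on the definition of $\mathcal{I}_{\R}(X,0)$ as being generated by complexifications.
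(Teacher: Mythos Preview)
Your proof is correct, but the route for $(1)\Rightarrow(2)$ differs from the paper's. The paper disposes of this implication in one line by invoking Whitney's result \cite[Lemma~9]{Whitney:1957} that ${\rm Sing}(X)_{\C}={\rm Sing}(X_{\C})$: once $m(X_{\C},0)=1$ forces $X_{\C}$ to be smooth at $0$, the singular locus of $X_{\C}$ is empty near $0$, hence so is that of $X$, and $C^{\omega}$ regularity follows. Your argument instead unwinds this by hand via the implicit function theorem, extracting $n-d$ real generators with independent differentials from the smoothness of $X_{\C}$ and then identifying $X$ with the resulting manifold $M$ using pure-dimensionality.

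What each approach buys: the paper's proof is a clean one-line reduction to a known structural fact about complexification of singular loci, but it outsources the real work to Whitney. Your argument is more self-contained and makes the mechanism transparent. The obstacle you flag is genuine but easily dispatched: one checks that $\mathcal{I}_{\R}(X,0)=I(X_{\C},0)$ (and in particular is radical) by writing any $f\in\C\{z\}$ vanishing on $X_{\C}$ as $f=g+ih$ with $g,h$ having real coefficients; since $f$ vanishes on $X\subset\R^n$ and $g(x),h(x)\in\R$ for $x\in\R^n$, both $g$ and $h$ vanish on $X$, so $f\in\mathcal{I}_{\R}(X,0)$. This guarantees the differentials of your complexified generators span the full $(n-d)$-dimensional conormal space of $X_{\C}$, and the rest of your argument goes through.
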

\begin{proof}
It is clear that $(2)\Rightarrow (3)$ and by Propositions \ref{analytic_equiv} and \ref{smooth_equiv}, we have that $(2)\Rightarrow (1)$ and $(3)\Rightarrow (1)$.
Since ${\rm Sing}(X)_{\C}={\rm Sing}(X_{\C})$ (see \cite[Lemma 9]{Whitney:1957}) and $m(X_{\C},0)=1$ if and only if $X_{\C}$ is the germ of a complex analytic submanifold, we obtain that if $m(X):=m(X_{\C},0)=1$ then $({\rm Sing}(X_{\C}),0)=\emptyset $ and, thus, $({\rm Sing}(X),0)=\emptyset $, which implies that $X$ is $C^{\omega }$ regular at $0$. Therefore, $(1)\Rightarrow (2)$, which finishes the proof.
\end{proof}

However, Proposition \ref{mult_um} does not hold true when we consider $C^{1}$ instead $C^{\infty }$.
\begin{example}
Let $X=\{(x,y)\in\R^2;\, y^3=x^4\}$. Then, $X$ is $C^1$ diffeomorphic to $\{(x,y)\in\R^2;\, y=0\}$. Moreover, $X$ is $C^1$ regular at $0$, but it is not $C^{\infty }$ regular at $0$. 
\end{example}
The above example tells us also that Propositions \ref{analytic_equiv} and \ref{smooth_equiv} do not hold true when we consider $C^{1}$ instead $C^{\omega}$ or $C^{\infty }$, since $X=\{(x,y)\in\R^2;\, y^3=x^4\}$ is $C^1$ diffeomorphic to $Y=\{(x,y)\in\R^2;\, y=0\}$, but $m(X)=3$ and $m(Y)=1$.


\begin{definition}
Let $X\subset \mathbb{R}^n$ and $Y\subset \mathbb{R}^m$ be closed subsets such that $0\in X\times Y$. We say that $(X,0)$ and $(Y,0)$ are {\bf differentiable equivalent} if there exists a homeomorphism $\varphi\colon (X,0)\to (Y,0)$ such that $\varphi$ and $\varphi^{-1}$ are differentiable at $0$. In this case, we say that $\varphi$ is a {\bf differentiable equivalence} (between $(X,0)$ and $(Y,0)$).
\end{definition}

\begin{lemma}[Proposition 1 in \cite{Sampaio:2019}]\label{diffeo_blow}
Let $X, Y\subset \R^m$ be subsets. If $(X,0)$ and $(Y,0)$ are differentiable equivalent at the origin, then $(X,0)$ and $(Y,0)$ are blow-spherical homeomorphic.
\end{lemma}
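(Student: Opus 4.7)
The plan is to promote a differentiable equivalence at the origin to a blow-spherical homeomorphism by producing an explicit extension to the strict transforms. By the definition of differentiability on subsets given earlier, choose linear maps $L, M : \R^m \to \R^m$ (differentials at $0$ of local extensions of $\varphi$ and $\varphi^{-1}$ to full neighborhoods of the origin) so that $\varphi(x) = L(x) + o(\|x\|)$ for $x \in X$ near $0$ and $\varphi^{-1}(y) = M(y) + o(\|y\|)$ for $y \in Y$ near $0$. The central analytic step is to show $L$ does not vanish on $C(X,0) \setminus \{0\}$. Given $v \in C_X$, take a sequence $\{x_n\} \subset X \setminus \{0\}$ with $x_n \to 0$ and $x_n/\|x_n\| \to v$; then $\varphi(x_n)/\|x_n\| \to L(v)$, so if $L(v) = 0$ we would have $\|\varphi(x_n)\|/\|x_n\| \to 0$. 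Applying the differentiability of $\varphi^{-1}$ to $y_n := \varphi(x_n)$ yields $\|x_n\|/\|\varphi(x_n)\| \leq \|M\| + o(1)$, a contradiction.

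Next, I would use this non-degeneracy to define the extension. For any sequence $\{x_n\} \subset X \setminus \{0\}$ with $x_n \to 0$ and $x_n/\|x_n\| \to v \in C_X$, dividing numerator by $\|\varphi(x_n)\|$ yields $\varphi(x_n)/\|\varphi(x_n)\| \to L(v)/\|L(v)\|$, while $\|\varphi(x_n)\| \to 0$ by continuity of $\varphi$. So I would define $\varphi' : X' \to Y'$ to equal $\beta_m^{-1}\circ\varphi\circ\beta_m$ on $X'\setminus\partial X'$ and set $\varphi'(v,0) := (L(v)/\|L(v)\|,\, 0)$ on $\partial X'$. Away from the boundary $\varphi'$ is continuous because $\beta_m$ restricts to a homeomorphism off the origin; at a boundary point $(v,0)$, continuity is obtained by applying the limit above to $z_n := r_n x_n$ arising from any sequence $(x_n, r_n) \to (v,0)$ in $X'$, which shows $\beta_m^{-1}(\varphi(z_n)) \to (L(v)/\|L(v)\|,\, 0) = \varphi'(v,0)$.

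Finally, the same construction applied to $\varphi^{-1}$ with the linear map $M$ yields a continuous extension $(\varphi^{-1})' : Y' \to X'$. The two compositions $(\varphi^{-1})' \circ \varphi'$ and $\varphi' \circ (\varphi^{-1})'$ are continuous and coincide with the identity on the dense open subsets $X'\setminus\partial X'$ and $Y'\setminus\partial Y'$ respectively, so by continuity they are the identities on all of $X'$ and $Y'$. Therefore $\varphi'$ is a homeomorphism, which by definition means $\varphi$ is a blow-spherical homeomorphism between $(X,0)$ and $(Y,0)$.

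The main obstacle is the non-vanishing claim $L(v) \neq 0$ for $v \in C_X \setminus \{0\}$; this is the unique place where the hypothesis that $\varphi^{-1}$ is also differentiable at $0$ (and not merely the continuous inverse of a differentiable map) really enters, since otherwise the linearization could collapse whole tangent directions and the spherical factor of the extension would fail to be defined. Once this step is secured, everything else is a routine sequence-continuity argument together with the standard density argument on $X'$.
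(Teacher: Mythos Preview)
Your proof is correct. Note, however, that the paper does not actually prove this lemma: it is stated without proof and attributed to Proposition~1 of \cite{Sampaio:2019}, so there is no in-paper argument to compare against. Your approach---extracting the linearization $L$ from an ambient differentiable extension, using the two-sided differentiability hypothesis to rule out $L(v)=0$ for $v\in C_X$, extending $\beta_m^{-1}\circ\varphi\circ\beta_m$ to $\partial X'$ by $(v,0)\mapsto (L(v)/\|L(v)\|,0)$, and closing with the symmetric construction for $\varphi^{-1}$ plus density---is the natural and standard route, and your identification of the non-vanishing step as the one place where differentiability of the inverse genuinely enters is exactly right.
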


\begin{lemma}[Proposition 3.3 in \cite{Sampaio:2020}]\label{defi_diferencial}
If $X$ and $Y$ are blow-spherical homeomorphic, then $C(X,0)$ and $C(Y,0)$ are also blow-spherical homeomorphic.
\end{lemma}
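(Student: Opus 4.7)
The plan is to upgrade the tangent-link homeomorphism produced by a blow-spherical equivalence to a genuine blow-spherical equivalence between the tangent cones, using the fact that a tangent cone is a cone in the literal geometric sense. I would not attempt any deformation or rigidity argument; the construction is explicit.

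First I would extract data from $\varphi$. By hypothesis $\varphi\colon (X,0)\to (Y,0)$ is a blow-spherical homeomorphism, so by the definition recalled in Section~\ref{bs_equiv} the extension $\varphi'\colon X'\to Y'$ restricts to a homeomorphism $\nu_\varphi\colon C_X\to C_Y$ of the tangent links on the unit spheres. I would then define
\[
\Phi\colon C(X,0)\longrightarrow C(Y,0),\qquad
\Phi(0)=0,\quad \Phi(v)=\|v\|\,\nu_\varphi\!\left(\tfrac{v}{\|v\|}\right)\ \text{for }v\neq 0.
\]
This is well defined because $C(X,0)$ is the cone over $C_X$ and $C(Y,0)$ is the cone over $C_Y$ (this is exactly the content of the remark that $\partial X'=C_X\times\{0\}$, applied both to $X$ and to the cone $C(X,0)$ whose tangent link is again $C_X$). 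Since $\nu_\varphi$ is a homeomorphism, $\Phi$ has the obvious two-sided inverse $v\mapsto \|v\|\,\nu_\varphi^{-1}(v/\|v\|)$, and continuity at every nonzero point is immediate from continuity of $\nu_\varphi$; continuity at $0$ follows since $\|\Phi(v)\|=\|v\|$.

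Next I would verify the blow-spherical condition. The spherical blowing-up identifies $\beta_n^{-1}(C(X,0)\setminus\{0\})$ with $C_X\times(0,+\infty)$ and $\beta_p^{-1}(C(Y,0)\setminus\{0\})$ with $C_Y\times(0,+\infty)$. In these coordinates
\[
\beta_p^{-1}\circ\Phi\circ\beta_n\colon C_X\times(0,+\infty)\longrightarrow C_Y\times(0,+\infty),
\qquad (x,r)\longmapsto(\nu_\varphi(x),r),
\]
which manifestly extends continuously to $C_X\times[0,+\infty)\to C_Y\times[0,+\infty)$ by the same formula, and this extension is a homeomorphism with inverse $(y,r)\mapsto(\nu_\varphi^{-1}(y),r)$. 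Hence $\Phi$ is a blow-spherical homeomorphism, proving that $(C(X,0),0)$ and $(C(Y,0),0)$ are blow-spherical equivalent.

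There is no serious obstacle here; the only point that requires a moment of care is that the tangent link of the tangent cone coincides with $C_X$ (so that $\Phi$ really is a map $C(X,0)\to C(Y,0)$ and not merely a map between their cones on possibly different links), which is automatic from the definition of $C(X,0)$ as $\{rx:r\geq 0,\ x\in C_X\}$. If desired, one could bypass the construction entirely by composing the identity blow-spherical morphisms $X\leadsto C(X,0)$ and $C(Y,0)\leadsto Y$ coming from the spherical blowing-up together with $\varphi'$, but the explicit formula above is cleaner and makes the homeomorphism property transparent.
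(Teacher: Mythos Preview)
Your argument is correct. The paper does not supply its own proof of this lemma; it is stated as a citation of Proposition~3.3 in \cite{Sampaio:2020}. Your construction---extending the tangent-link homeomorphism $\nu_\varphi$ radially to $\Phi(v)=\|v\|\,\nu_\varphi(v/\|v\|)$ and then reading off the blow-spherical condition in product coordinates as $(x,r)\mapsto(\nu_\varphi(x),r)$---is the natural argument and is complete as written; the only point you flagged (that the strict transform of the cone $C(X,0)$ is $C_X\times[0,\infty)$) is indeed immediate from the fact that $C(X,0)$ is a closed cone over $C_X$.
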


\begin{lemma}[Milnor \cite{Milnor:1968}, Lemma 3.3]\label{branches}
Let $V\subset \R^{n}$ be a real analytic curve and $x_0\in V$ a non-isolated point. Then, there are an open neighborhood $U\subset \R^{n}$ of $x_0$ and $\Gamma_1,...,\Gamma_r\subset\R^{n}$ such that $\Gamma_i\cap\Gamma_j=\{x_0\}$ whenever $i\not=j$ and  
$$V\cap U=\bigcup\limits _{i=1}^r\Gamma_i.$$ 
Moreover, for each $i\in\{1,...,r\}$, there is an analytic homeomorphism $\gamma_i\colon(-\varepsilon ,\varepsilon )\to \Gamma_i$.
\end{lemma}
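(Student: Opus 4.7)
After translation, I assume $x_0=0$. The strategy is to pass to the complexification $V_{\C}\subset\C^n$ (as defined in Subsection \ref{subsection:mult_rel}), which is a $1$-dimensional complex analytic germ at the origin, then extract the real branches using the action of complex conjugation.

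First, I decompose $V_{\C}=\bigcup_{j=1}^s W_j$ into its irreducible components, each a $1$-dimensional irreducible complex analytic germ, and recall from the classical theory of complex analytic curves (Puiseux's theorem) that each $W_j$ admits a normalization, i.e., a complex analytic homeomorphism $\phi_j\colon (\C,0)\to(W_j,0)$. Complex conjugation $\sigma$ preserves $V_{\C}$, since $V$ is cut out by real equations, so $\sigma$ permutes the irreducible components $W_j$. For a $\sigma$-orbit of size two, say $\sigma(W_j)=W_k\neq W_j$, the intersection $W_j\cap W_k$ is a proper analytic subset of each, hence $0$-dimensional; consequently any real point of $W_j$ also lies in $\sigma(W_j)=W_k$, forcing $(W_j\cup W_k)\cap \R^n=\{0\}$ in a small neighborhood of the origin. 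Thus such orbits contribute nothing to $V$ near $0$.

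For a $\sigma$-fixed component (i.e., $\sigma(W_j)=W_j$), the map $t\mapsto \sigma(\phi_j(\bar t))$ is again a normalization of $W_j$, so it equals $\phi_j\circ u$ for some analytic isomorphism $u$ of $(\C,0)$; iterating gives $u\circ \bar u=\mathrm{id}$, where $\bar u(t):=\overline{u(\bar t)}$. A standard argument then allows me to reparametrize $\phi_j$ so that the new normalization satisfies $\sigma\circ\phi_j\circ\sigma=\phi_j$. The restriction $\gamma_j:=\phi_j|_{(-\varepsilon,\varepsilon)}$ is then an analytic homeomorphism onto $\Gamma_j:=\phi_j((-\varepsilon,\varepsilon))\subset W_j\cap\R^n$, and in fact $\Gamma_j=W_j\cap\R^n\cap U$ for a sufficiently small neighborhood $U$ of $0$. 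Collecting these real branches produces the decomposition $V\cap U=\bigcup_{j=1}^r\Gamma_j$; the property $\Gamma_i\cap\Gamma_j=\{0\}$ for $i\neq j$ follows because $\Gamma_i\cap\Gamma_j\subset W_i\cap W_j$ is $0$-dimensional, so it reduces to $\{0\}$ after shrinking $U$.

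The main obstacle is the equivariant adjustment of the Puiseux parametrization: one must show that the anti-holomorphic involution $\tau(t):=\overline{u^{-1}(\bar t)}$ of $(\C,0)$ is analytically conjugate to the standard conjugation $t\mapsto \bar t$, so that its fixed-point set becomes a real analytic arc through the origin. This is a classical linearization statement for anti-holomorphic involutions, solvable order by order in a formal power series and convergent since $|u'(0)|=1$; once it is granted, the verification that $\Gamma_j$ exhausts $W_j\cap\R^n$ near $0$ is immediate from the equivariance of $\phi_j$.
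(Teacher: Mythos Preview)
The paper does not give a proof of this lemma; it is quoted verbatim from Milnor's book with a citation. Your argument via complexification, Puiseux normalization of the irreducible components of $V_{\C}$, and the action of complex conjugation is a correct and standard route to this result, and it is somewhat more conceptual than Milnor's original argument (which proceeds by Weierstrass preparation and resultants to exhibit the branches directly as graphs of Puiseux series over a generic coordinate axis).

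Two small comments on your write-up. First, the justification ``convergent since $|u'(0)|=1$'' is misleading: for generic linearization problems the unit circle is precisely where small divisors obstruct convergence. What makes your case work is the involution, which allows an averaging trick: after a linear change bringing $\tau'(0)$ to $1$, the map $h(t)=\tfrac{1}{2}\bigl(t+\overline{\tau(t)}\bigr)$ is holomorphic with $h'(0)=1$ and satisfies $h\circ\tau=\sigma\circ h$ exactly, so convergence is automatic. Second, you silently use $(V,0)=(V_{\C}\cap\R^n,0)$ as germs, which is true because $V_{\C}$ is cut out by the complexifications of \emph{all} real analytic functions vanishing on $V$; it is worth saying this explicitly, since for non-coherent real analytic sets the relation between a set and its complexification can be delicate.
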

Each $\Gamma_i$ in the above proposition is called {\bf an analytic branch of $X$ at $x_0$} and $\Gamma_1,...,\Gamma_r$ is called {\bf a decomposition in analytic branches for $(X,x_0)$}.

\begin{theorem}\label{half-line}
Let $\gamma\colon (-\varepsilon,\varepsilon)\to \R^n$ be an analytic curve and $\Gamma=\gamma((-\varepsilon,\varepsilon))$. Suppose that $\gamma\colon (-\varepsilon,\varepsilon)\to \Gamma$ is a homeomorphism and $\gamma(0)=0$. Then the following statements are equivalent:
\begin{enumerate}
 \item $\Gamma$ is blow-spherical regular at $0$;
 \item $C(\Gamma,0)$ is homeomorphic to $\R$;
 \item $C(\Gamma,0)$ is a real line;
 \item ${\rm ord}_0 \gamma$ is an odd number and, in particular, 
 $
 \lim\limits_{t\to 0^+}\frac{\gamma(t)}{\|\gamma(t)\|}=-\lim\limits_{t\to 0^-}\frac{\gamma(t)}{\|\gamma(t)\|};
 $
 \item $\Gamma$ is $C^1$ regular at $0$;
 \item $\Gamma$ is $C^1$ Lipschitz regular at $0$.
\end{enumerate}
\end{theorem}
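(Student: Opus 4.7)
Expand the analytic parametrization as $\gamma(t) = t^k u(t)$ with $k = \mathrm{ord}_0 \gamma$, $u$ analytic, and $a := u(0) \neq 0$. My strategy is to prove the cycle $(1) \Rightarrow (2) \Leftrightarrow (3) \Leftrightarrow (4) \Rightarrow (5) \Leftrightarrow (6) \Rightarrow (1)$; the middle block comes from a direct tangent cone computation, while the main technical step is $(4) \Rightarrow (5)$, where a non-$C^1$ change of coordinates must produce a $C^1$ graph by a cancellation of vanishing orders.

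\textbf{Easy implications.} Since $\gamma(t)/\|\gamma(t)\| = (\mathrm{sign}\,t)^k\, u(t)/\|u(t)\|$, the one-sided limits at $0$ are $a/\|a\|$ and $(-1)^k a/\|a\|$, so $C(\Gamma,0)$ is the half-line $\R_+ \cdot a$ when $k$ is even and the line $\R \cdot a$ when $k$ is odd. Since the half-line is not homeomorphic to $\R$, this yields $(2) \Leftrightarrow (3) \Leftrightarrow (4)$. For $(1) \Rightarrow (2)$, Lemma \ref{defi_diferencial} implies that $C(\Gamma,0)$ is blow-spherical (in particular, topologically) equivalent to the tangent cone of an Euclidean interval at an interior point, which is $\R$. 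Conversely, a $C^1$ chart on $\Gamma$ is automatically locally bi-Lipschitz and is differentiable at $0$ with differentiable inverse, so $(5) \Leftrightarrow (6)$ and, by Lemma \ref{diffeo_blow}, $(5) \Rightarrow (1)$.

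\textbf{The main step $(4) \Rightarrow (5)$.} Rotate coordinates so that $a = \|a\|\, e_1$; then $\gamma_1(t) = t^k u_1(t)$ with $u_1(0) > 0$, and since $k$ is odd, $\gamma_1$ is strictly monotone near $0$. Let $\psi := \gamma_1^{-1}$. For $j \geq 2$, $u_j(0) = 0$ forces $\gamma_j(t) = t^{k+1} \tilde v_j(t)$ for some analytic $\tilde v_j$. Parametrize $\Gamma$ near $0$ as the graph $s \mapsto (s, f_2(s), \ldots, f_n(s))$ with $f_j := \gamma_j \circ \psi$, and compute for $s \neq 0$, with $t = \psi(s)$,
\[
f_j'(s) = \frac{\gamma_j'(t)}{\gamma_1'(t)} = t \cdot \frac{(k+1)\tilde v_j(t) + t\,\tilde v_j'(t)}{k\, u_1(t) + t\, u_1'(t)},
\]
which extends continuously to $s = 0$ with value $0$ since $t \to 0$ and the fraction stays bounded (its denominator tends to $k\, u_1(0) \neq 0$). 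Hence each $f_j$ is $C^1$ on a neighborhood of $0$, so $\Gamma$ is locally a $C^1$ graph, i.e., a $C^1$ submanifold. The main obstacle is precisely that $\psi$ is only H\"older (not $C^1$) at $0$ when $k > 1$; what saves the computation is that the vanishing order of $\gamma_j$ exceeds that of $\gamma_1$ by at least one, producing the extra factor of $t$ above that cancels the singularity of $\psi'$ and restores differentiability of $f_j$ at the origin.
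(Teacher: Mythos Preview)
Your cycle has a genuine gap at condition $(6)$: you assert $(5) \Leftrightarrow (6)$ but only argue $(5) \Rightarrow (6)$ (a $C^1$ chart is locally bi-Lipschitz). The reverse direction---that Lipschitz regularity forces $C^1$ regularity---is not obvious and in fact fails for non-analytic sets (the graph of $t \mapsto |t|$ in $\R^2$ is Lipschitz regular but not $C^1$). Without it, nothing links $(6)$ back to the rest of the cycle, so the equivalence is incomplete. The paper closes the loop differently: it proves $(6) \Rightarrow (2)$ by invoking the result of \cite{Sampaio:2016} that bi-Lipschitz homeomorphic subanalytic sets have (bi-Lipschitz) homeomorphic tangent cones, so Lipschitz regularity of $\Gamma$ forces $C(\Gamma,0)$ to be homeomorphic to $C(\R,0)=\R$. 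You need either this citation or an equivalent argument.

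Apart from that, your proof is correct, and your $(4) \Rightarrow (5)$ takes a different route from the paper's. You project to the tangent direction, invert $\gamma_1$, and show the remaining coordinates are $C^1$ graphs over the first---the extra order of vanishing in $\gamma_j$ for $j\ge 2$ supplies the factor of $t$ that cancels the blow-up of $\psi'$. The paper instead reparametrizes globally: since $k$ is odd, $\beta(s):=\gamma(s^{1/k})=s\,\alpha(s^{1/k})$ is well defined and is directly a $C^1$ immersion with $\beta'(0)=\alpha(0)\neq 0$. Both arguments work; the paper's is shorter, while yours makes the $C^1$-graph structure of $\Gamma$ explicit.
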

\begin{proof}
Since $\gamma'$ also is an analytic curve, by shrinking $\varepsilon$, if necessary, we can assume that $\gamma'(t)\not=0$ for all $t\in (-\varepsilon,\varepsilon)\setminus \{0\}$ and changing $\gamma$ by $\gamma\circ r$, where $r\colon(-1,1)\to (-\varepsilon,\varepsilon)$ is given by $r(t)=\varepsilon t$, we can suppose that $\varepsilon =1$. Moreover, for $k={\rm ord}_0 \gamma$, there exist $w\in\R^n$ such that $\gamma(t)=t^kw+o(t^k)$ and an analytic curve $\alpha \colon(-1 ,1 )\to \R^{n}$ satisfying $\alpha(0)\not=0$ and $\gamma(t)=t^k\alpha(t)$, for all $t\in (-1,1)$. Hence, we have the following
$$
u=\lim\limits _{t\to 0^+}\frac{\gamma(t)}{\|\gamma(t)\|}=\frac{w}{\|w\|}
$$
and
$$
v=\lim\limits _{t\to 0^-}\frac{\gamma(t)}{\|\gamma(t)\|}=(-1)^k\frac{w}{\|w\|}.
$$

\noindent $(1)\Rightarrow (2)$. If $\Gamma$ is blow-spherical regular at $0$ then $(\Gamma,0)$ and $(\R,0)$ are blow-spherical homeomorphic. Thus, by Lemma \ref{defi_diferencial}, $C(X,0)$ is homeomorphic to $\R$.

\noindent $(2)\Rightarrow (4)$ and $(3)\Rightarrow (4)$. Suppose that $C(\Gamma,0)$ is homeomorphic to $\R$.
Suppose that $k$ is an even natural number. Then by definitions of $u$ and $v$, we have $u=v$ and in this case $C(\Gamma,0)=\{\lambda u;\, \lambda \geq 0\}$. Therefore, it is clear that $C(\Gamma,0)$ is not homeomorphic to $\R$, which is a contradiction.

\noindent $(4)\Rightarrow (5)$. Suppose that $k$ is an odd number. Then, the curve $\beta\colon (-1,1)\to X$ given by $\beta(s)=\gamma(s^{\frac{1}{k}})$ is well defined. Moreover, $\beta\colon (-1,1)\to X\cap U$ is a homeomorphism, since the function $h\colon (-1,1)\to (-1,1)$ given by $h(s)=s^{\frac{1}{k}}$ and $\gamma$ are homeomorphism. Thus, we obtain that $\beta(s)=s\alpha(s^{\frac{1}{k}})$ and, in this form, we obtain that $\beta$ is a $C^1$ function with $\beta'(0)\not =0$. Therefore, $\Gamma$ is $C^1$ regular at $0$.

\noindent $(5)\Rightarrow (1)$. If $\Gamma$ is $C^1$ regular at $0$, then $(\Gamma,0)$ and $(\R,0)$ are $C^1$ diffeomorphic and by Lemma \ref{diffeo_blow}, $(\Gamma,0)$ and $(\R,0)$ are blow-spherical homeomorphic.

\noindent $(5)\Rightarrow (6)$. If $\Gamma$ is $C^1$ regular at $0$, then $(\Gamma,0)$ and $(\R,0)$ are $C^1$ diffeomorphic. Therefore, $(\Gamma,0)$ and $(\R,0)$ are bi-Lipschitz homeomorphic.

\noindent $(6)\Rightarrow (2)$. If $\Gamma$ is Lipschitz regular at $0$, then $(\Gamma,0)$ and $(\R,0)$ are bi-Lipschitz homeomorphic. By Theorem 3.2 in \cite{Sampaio:2016}, $C(X,0)$ is homeomorphic to $\R$.
\end{proof}
\begin{corollary}\label{regular}
Let $X\subset \R^n$ be a real analytic curve. Then, $X$ is blow-spherical regular at $0$ if and only if $X$ is $C^1$ regular at $0$.
\end{corollary}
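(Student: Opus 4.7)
My plan is to treat the two implications separately, leveraging Theorem \ref{half-line} (the equivalences for a single analytic branch parametrized by an analytic homeomorphism) together with Lemma \ref{branches} (Milnor's local decomposition of a real analytic curve into analytic branches).

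For the direction "$C^1$ regular $\Rightarrow$ blow-spherical regular", the argument is short: if $X$ is $C^1$ regular at $0$, then a neighborhood of $0$ in $X$ is $C^1$-diffeomorphic to an open interval $I \subset \mathbb{R}$ (which is a Euclidean ball in $\mathbb{R}$), so in particular $(X,0)$ and $(I,0)$ are differentiably equivalent. Applying Lemma \ref{diffeo_blow}, this differentiable equivalence produces a blow-spherical homeomorphism between $(X,0)$ and $(I,0)$, hence $X$ is blow-spherical regular at $0$.

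For the reverse direction, suppose $X$ is blow-spherical regular at $0$, i.e.\ $(X,0)$ is blow-spherical equivalent to a Euclidean ball. Since $X$ is $1$-dimensional (as a topological space) and a blow-spherical homeomorphism is in particular a homeomorphism of germs, the ball must be $1$-dimensional, namely an open interval $J = (-\delta, \delta) \subset \mathbb{R}$. I will show first that $X$ has only one analytic branch at $0$. By Lemma \ref{branches}, $(X,0) = \bigcup_{i=1}^{r} \Gamma_i$ with $\Gamma_i \cap \Gamma_j = \{0\}$ for $i \neq j$, and each $\Gamma_i$ admits an analytic homeomorphic parametrization $\gamma_i \colon (-\varepsilon, \varepsilon) \to \Gamma_i$. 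Since $\gamma_i$ is a homeomorphism, each $\Gamma_i \setminus \{0\}$ has exactly two connected components (the images of $(-\varepsilon, 0)$ and $(0, \varepsilon)$), and these are pairwise disjoint across distinct indices. Therefore, in a small neighborhood of $0$, $X \setminus \{0\}$ has $2r$ connected components. Because a blow-spherical equivalence is in particular a germ homeomorphism, $X \setminus \{0\}$ and $J \setminus \{0\}$ must have the same number of connected components near $0$, namely $2$. Hence $r = 1$.

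Thus $X = \Gamma_1 = \gamma_1((-\varepsilon, \varepsilon))$ with $\gamma_1$ an analytic homeomorphism, placing us exactly under the hypotheses of Theorem \ref{half-line}. The blow-spherical regularity of $X$ at $0$ is condition (1) of that theorem, so the equivalence $(1) \Leftrightarrow (5)$ yields the $C^1$ regularity of $X$ at $0$, completing the proof. The main step is the component-counting that forces $r = 1$; the rest reduces to invoking Theorem \ref{half-line} and Lemma \ref{diffeo_blow}.
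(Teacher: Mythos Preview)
Your proof is correct and follows essentially the same approach as the paper: both directions invoke Lemma~\ref{branches}, Theorem~\ref{half-line}, and Lemma~\ref{diffeo_blow} in the same way. You actually supply more detail than the paper on why $r=1$ (counting connected components of the punctured germ), whereas the paper simply asserts it.
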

\begin{proof}
Suppose that $X$ is blow-spherical regular at $0$.
By Lemma \ref{branches}, there are an open neighborhood $U\subset \R^{n}$ of $0$ and $\Gamma_1,...,\Gamma_r\subset\R^{n}$ such that $\Gamma_i\cap\Gamma_j=\{0\}$, if $i\not=j$ and  
$$X\cap U=\bigcup\limits _{i=1}^r\Gamma_i.$$ 
Moreover, for each $i\in\{1,...,r\}$, there is an analytic homeomorphism $\gamma_i\colon(-\varepsilon ,\varepsilon )\to \Gamma_i$.
Since $X$ is blow-spherical regular at $0$, then $r=1$, $X\cap U= \Gamma_1$ and $\gamma_1\colon(-\varepsilon ,\varepsilon )\to X\cap U$ is an analytic homeomorphism. Since $X$ is blow-spherical regular at $0$, then by Theorem \ref{half-line}, $X$ is $C^1$ regular at $0$.

Reciprocally, if $X$ is $C^1$ regular at $0$, by Lemma \ref{diffeo_blow}, we obtain that $X$ is blow-spherical regular at $0$.
\end{proof}
This result is sharp. Firstly, the hypothesis of $X$ to be blow-spherical regular at $0$ cannot be removed.
\begin{example}
Let $X=\{(x,y)\in\R^2;\, y^2=x^3\}$. Then $X$ is homeomorphic to $\R$, but is not $C^1$. In fact, $X$ is a topological submanifold of $\R^2$.
\end{example}

Secondly, the hypothesis of $X$ to be a curve (i.e. $\dim X=1$) also cannot be removed.
\begin{example}\label{naosuave_blow}
Let $V=\{(x,y,z)\in\R^3;\, z^3=x^5y+xy^5\}$. Then $C(V,0)=\{z=0\}$ is a plane and $V$ is a topological submanifold of $\R^3$. Moreover, $V$ is graph of a differentiable function at the origin and, thus, $V$ is blow-spherical regular at $0$. However, $V$ is not $C^1$ regular at $0$.
\end{example}
Finally, the hypothesis of $X$ to be an analytic set cannot be removed as well.
\begin{example}
The set $V=\{(x,y)\in\R^2;\, y=|x|\}$ is semi-algebraic and the mapping $\varphi:\R\to V$ given by $\varphi(x)=(x,|x|)$ is a blow-spherical homeomorphism and, in particular, $V$ is blow-spherical regular at $0$, but clearly $V$ is not $C^1$ regular at $0$.
\end{example}



\subsection{Classification of the real analytic curves module blow-isomorphisms}
Let $p_1\colon \mathbb Z_{>0}\times \mathcal F(\mathbb Z_3;\mathbb Z_{\geq 0})\to \mathbb Z_{>0}$ and $p_2\colon \mathbb Z_{>0}\times \mathcal F(\mathbb Z_3;\mathbb Z_{\geq 0})\to \mathcal F(\mathbb Z_3;\mathbb Z_{\geq 0})$ be the canonical projections, where $\mathcal F(\mathbb Z_3;\mathbb Z_{\geq 0})$ denotes the set of all non-null functions from $\mathbb Z_3\cong\{-1,0,1\}$ to $\mathbb Z_{\geq 0}$.
Let $\mathcal A$ be the subset of $\mathbb Z_{>0}\times \mathcal F(\mathbb Z_3;\mathbb Z_{\geq 0})$ formed by finite and non-empty subsets $A$ satisfying the following:
\begin{itemize}
\item [i)] $p_1(A)=\{1,...,N\}$ for some $N\in \mathbb Z_{>0}$; 
\item [ii)] $p_2(p_1^{-1}(\ell )\cap A)=\{r_{\ell}\}$ and $r_{\ell }(-1)\leq r_{\ell}(1)$ for all $\ell\in \{1,...,N\}$;
\item [iii)] $r_{\ell }(0)\leq r_{\ell+1}(0)$ for all $\ell \in \{1,...,N-1\}$. Moreover, if $r_{\ell }(0)= r_{\ell+1}(0)$ then $\sum\limits_{i=-1}^1r_{\ell }(i)\leq \sum\limits_{i=-1}^1r_{\ell+1}(i)$.
\end{itemize}
For a set $A\in \mathcal A$ as above, we define the following curves: For $j\in \{-1,0, 1\}$ and $r_{\ell }(j)>0$

$$
X_{A,j}=\bigcup\limits_{\ell=1}^N \{(x,y)\in \R^2; \displaystyle\prod \limits_{r=1}^{r_{\ell }(j)}((y-\ell x)^{2}-jr(y+\ell x)^{3})=0\},
$$
when $j\in \{-1, 1\}$ and 
$$
X_{A,j}=\bigcup\limits_{\ell=1}^N \{(x,y)\in \R^2; \displaystyle\prod \limits_{r=1}^{r_{\ell }(0)}((y-\ell x)^{3}-r(y+\ell x)^{4})=0\}
$$
when $j=0$.
Moreover, if $r_{\ell }(j)=0$ we define $X_{A,j}=\{0\}$.
We define the realization of $A$ to be the curve $X_A:=X_{A,-1}\cup X_{A,0}\cup X_{A,1}$.

\begin{remark}\label{homeo_to_cusp}
Let $\gamma\colon (-\varepsilon,\varepsilon)\to \R^n$ be an analytic curve and $\Gamma=\gamma((-\varepsilon,\varepsilon))$. Suppose that $\gamma\colon (-\varepsilon,\varepsilon)\to \Gamma$ is a homeomorphism and $\gamma(0)=0$. Then, by Theorem \ref{half-line} and the definition of blow-spherical equivalence, $\Gamma$ and $\{(x,y)\in \R^2;\, y^2=x^3\}$ are blow-spherical homeomorphic if and only if $C(\Gamma,0)$ is a half-line. 
\end{remark}

\begin{definition}
Let $X\subset \R^n$ and $\tilde X\subset \R^m$ be two analytic sets. We say that $(X,0)$ is {\bf branch by branch blow-spherical homeomorphic} to $(\tilde X,0)$ if there are decompositions in analytic branches $\Gamma_1,...,\Gamma_r$ for $(X,0)$ and $\tilde\Gamma_1,...,\tilde \Gamma_r$ for $(\tilde X,0)$ and there is a blow-spherical homeomorphism $\varphi\colon (X,0)\to (\tilde X,0)$ such that $\varphi((\Gamma_i,0))=(\tilde\Gamma_{i},0)$ for all $i\in \{1,...,r\}$.
\end{definition}

Therefore we obtain the following classification.
\begin{theorem}\label{curve_classif}
For each real analytic curve $X\subset \R^n$ such that $0\in X$ there exists a unique set $A\in \mathcal A$ such that $(X_A,0)$ is branch by branch blow-spherical homeomorphic to $(X,0)$.
\end{theorem}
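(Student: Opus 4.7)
The plan is to extract from $X$ a complete blow-spherical invariant, encode it as an element of $\mathcal{A}$, and verify both existence of the equivalence and uniqueness of the encoding. First, I would invoke Lemma \ref{branches} to write $(X,0)=\bigcup_{i=1}^{s}(\Gamma_i,0)$ with $\Gamma_i\cap\Gamma_j=\{0\}$ for $i\neq j$ and each $\Gamma_i$ analytically parametrized by $\gamma_i\colon(-\varepsilon,\varepsilon)\to\Gamma_i$. By Theorem \ref{half-line} and Remark \ref{homeo_to_cusp}, each branch is blow-spherical equivalent either to the line $\R$ (when ${\rm ord}_0\gamma_i$ is odd, so $C(\Gamma_i,0)$ is a full line) or to the cusp $\{y^2=x^3\}$ (when ${\rm ord}_0\gamma_i$ is even, so $C(\Gamma_i,0)$ is a half-line). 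Each branch therefore carries two pieces of discrete data: its tangent line $L_i\subset\R^n$ and, in the cusp case, which of the two half-lines in $L_i$ it occupies.

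To build $A$, I would enumerate the distinct tangent lines appearing among the $L_i$ as $L^{(1)},\ldots,L^{(N)}$, fix one unit direction in each $L^{(k)}$ as the preferred one, and let $r^{(k)}(0)$ be the number of smooth branches of $X$ with tangent $L^{(k)}$ and $r^{(k)}(\pm1)$ the numbers of cusp branches of each orientation along $L^{(k)}$. Swapping the preferred direction in each $L^{(k)}$ when necessary arranges $r^{(k)}(-1)\leq r^{(k)}(1)$, which is condition (ii). Finally, I relabel the lines by $\ell\in\{1,\ldots,N\}$ so that $\ell\mapsto r_\ell(0)$ is non-decreasing with ties broken by $\ell\mapsto\sum_i r_\ell(i)$, yielding condition (iii), and I set $A=\{(\ell,r_\ell)\}_{\ell=1}^N\in\mathcal A$.

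To exhibit a branch-by-branch blow-spherical homeomorphism $\varphi\colon(X,0)\to(X_A,0)$, I pair each branch of $X$ with the unique branch of $X_A$ of matching type (smooth or cusp) and matching orientation along the corresponding line $y=\ell x$. Theorem \ref{half-line} supplies a blow-spherical homeomorphism on each smooth pair and Remark \ref{homeo_to_cusp} on each cusp pair. Since the $\Gamma_i$ meet only at the origin, the lifts $\Gamma_i'$ are otherwise disjoint in $X'$; and since branches of $X$ sharing a tangent direction have been paired with branches of $X_A$ sharing the matched tangent direction, the induced map on $\partial X'$ is single-valued and continuous, so the branchwise maps assemble into a blow-spherical homeomorphism $\varphi$.

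For uniqueness, given $A,A'\in\mathcal A$ both realizing $X$, composing yields a branch-by-branch blow-spherical homeomorphism $\psi\colon(X_A,0)\to(X_{A'},0)$. By Theorem \ref{half-line} $\psi$ sends smooth branches to smooth and cusps to cusps, and the induced bijection $\nu_\psi\colon C_{X_A}\to C_{X_{A'}}$ on the finite link sets must send the two endpoints of any tangent line of $X_A$ to the two endpoints of some tangent line of $X_{A'}$, because smooth branches connect these endpoint pairs. This forces $N=N'$ and the existence of a permutation $\sigma$ with $r_\ell$ equal to $r'_{\sigma(\ell)}$ as functions on $\mathbb{Z}_3$ up to a possible swap of $\pm 1$ within a line, a swap that condition (ii) excludes. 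Condition (iii) then pins down $\sigma$, so $A=A'$. I expect the main obstacle to be the gluing step in existence: making the independently chosen branchwise equivalences agree on the shared points of $\partial X'$ requires careful bookkeeping of cusp partner assignments so that the induced link map $C_X\to C_{X_A}$ is single-valued.
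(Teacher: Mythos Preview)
Your overall strategy coincides with the paper's: decompose $X$ into analytic branches via Lemma~\ref{branches}, classify each branch by its tangent cone using Theorem~\ref{half-line} and Remark~\ref{homeo_to_cusp}, group branches by the line spanned by their tangent direction, record the counts $r_\ell(-1),r_\ell(0),r_\ell(1)$, and normalize the labeling according to conditions (ii) and (iii). The paper is much terser than you are---it simply declares that the branch-by-branch equivalence is ``not hard to verify'' and that uniqueness ``follows directly from the definition of $\mathcal A$''---so your explicit discussion of the gluing on $\partial X'$ and of why the normalization pins down $A$ is a genuine addition, not a deviation.

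There is, however, a real gap in your uniqueness argument. You assert that $\nu_\psi$ must carry the two antipodal link points of each tangent line of $X_A$ to the two link points of a single tangent line of $X_{A'}$, ``because smooth branches connect these endpoint pairs.'' But a tangent line need not carry any smooth branch: if $r_\ell(0)=0$, nothing in $X_A'$ joins the two antipodal boundary points over $L_\ell$, and $\nu_\psi$ is free to send them to points lying over \emph{different} lines of $X_{A'}$. Concretely, take $A\in\mathcal A$ with $N=1$ and $r_1=(1,0,1)$ (two cusps pointing in opposite directions along one line), and $A'\in\mathcal A$ with $N=2$ and $r_1=r_2=(0,0,1)$ (one cusp on each of two distinct lines). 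Both satisfy (i)--(iii), yet $X_A'$ and $X_{A'}'$ are each a disjoint pair of half-arcs with $k\equiv 1$ at every boundary point, so by Theorem~\ref{curve_classif_two} (or by direct inspection) $(X_A,0)$ and $(X_{A'},0)$ are branch-by-branch blow-spherical homeomorphic. Your antipodal-pairing step, and with it the conclusion $N=N'$, fails precisely here. The paper's one-line appeal to ``the definition of $\mathcal A$'' does not address this case either, so the gap is shared with the source; your more explicit argument merely makes it visible.
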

\begin{proof}
 Given a real analytic curve $X\subset \R^2$, by Lemma \ref{branches}, there are an open neighborhood $U\subset \R^{n}$ of $0$ and $\Gamma_1,...,\Gamma_r\subset\R^{n}$ such that $\Gamma_i\cap\Gamma_j=\{0\}$ whenever $i\not=j$ and  
$$X\cap U=\bigcup\limits _{i=1}^r\Gamma_i,$$
where for each $i\in\{1,...,r\}$, there is an analytic homeomorphism $\gamma_i\colon(-\varepsilon ,\varepsilon )\to \Gamma_i$. Let $L_1,...,L_N$ be all the lines such that each $L_j$ contains the tangent cone of some $\Gamma_i$. By reordering the indices, if necessary, we may assume that for each $\ell\in \{1,...,N-1\}$ we have
\begin{equation}\label{eq_ordering}
\#\{i;C(\Gamma_i,0)= L_{\ell}\}\leq \#\{i;C(\Gamma_i,0)= L_{\ell+1}\}
\end{equation}
and, moreover, if happens the equality in (\ref{eq_ordering}) then 
$$
\#\{i;C(\Gamma_i,0)\subset L_{\ell}\}\leq \#\{i;C(\Gamma_i,0)\subset L_{\ell+1}\}.
$$
Thus, for each $\ell\in \{1,...,N\}$ we write $L_{\ell}=L_{\ell}^-\cup L_{\ell}^+$, where $L_{\ell}^-$ and $ L_{\ell}^+$ are half-lines such that $L_{\ell}^-\cap L_{\ell}^+=\{0\}$ and
$$
\#\{i;C(\Gamma_i,0)= L_{\ell}^-\}\leq \#\{i;C(\Gamma_i,0)= L_{\ell}^+\}.
$$
and, thus, we define the function $r_{\ell}\colon \{-1,0,1\}\to \mathbb{Z}_{\geq 0}$ given by
$r_{\ell}(-1)=\#\{i;C(\Gamma_i,0)= L_{\ell}^-\}$, $r_{\ell}(0)=\#\{i;C(\Gamma_i,0)= L_{\ell}\}$ and $r_{\ell}(1)=\#\{i;C(\Gamma_i,0)= L_{\ell}^+\}$. 

Then $A=\bigcup\limits_{\ell=1}^N\{(\ell,r_{\ell})\}\in \mathcal{A}$ and it is not hard to verify that $(X_A,0)$ is branch by branch blow-spherical homeomorphic to $(X,0)$. The uniqueness of $A$ follows directly from the definition of $\mathcal{A}$.
%
\end{proof}

\begin{definition}
The {\bf real blow-spherical tree at $0$ of $X$} is the rooted tree with a root corresponding to the curve, with vertices $L_k^1$'s corresponding to the tangent half-lines at $0$ $\ell_k$'s and for each vertex $L_j$, we put vertices $V_j^{1},...,V_j^{k_j}$  corresponding to the branches at infinity $X_j^{i}$'s which are tangent at $0$ to $\ell_j$. We put an edge joining the vertices $L_j$ and $V_j^{i}$. We put also edges joining each vertex $L_j$ with the root.
 \end{definition}

%

See some examples of real blow-spherical trees at $0$ in Figures \ref{fig:realbstree} and \ref{fig:realbstree_two}.

\begin{figure}[h!]
     \centering
 \begin{tikzpicture}[line cap=round,line join=round,>=triangle 45,x=0.7cm,y=0.7cm]
\clip(-4,-0.5) rectangle (4,5);
\draw [line width=1pt] (0,0)-- (0,2);
\draw [line width=1pt] (0,2)-- (-1,4);
\draw [line width=1pt] (0,2)-- (1,4);
\draw [line width=1pt] (0,0) circle (4pt);
\begin{scriptsize}
\draw [fill=black] (0,0) circle (2pt);
\draw [fill=black] (0,2) circle (2pt);
\draw [fill=black] (-1,4) circle (2pt);
\draw [fill=black] (1,4) circle (2pt);
\end{scriptsize}
\end{tikzpicture}
\caption{Real blow-spherical tree at $0$ of the curve $y^2-x^3=0$.}
     \label{fig:realbstree}
\end{figure}
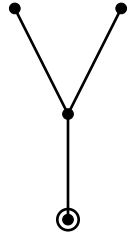

 \begin{figure}[h!]
     \centering
 \begin{tikzpicture}[line cap=round,line join=round,>=triangle 45,x=0.7cm,y=0.7cm]
\clip(-4,-0.5) rectangle (4,5);
\draw [line width=1pt] (0,0)-- (2,2);
\draw [line width=1pt] (0,0)-- (-2,2);
\draw [line width=1pt] (2,2)-- (1,4);
\draw [line width=1pt] (2,2)-- (2,4);
\draw [line width=1pt] (2,2)-- (3,4);
\draw [line width=1pt] (-2,2)-- (-3,4);
\draw [line width=1pt] (0,0) circle (4pt);
\begin{scriptsize}
\draw [fill=black] (0,0) circle (2pt);
\draw [fill=black] (2,2) circle (2pt);
\draw [fill=black] (-2,2) circle (2pt);
\draw [fill=black] (1,4) circle (2pt);
\draw [fill=black] (2,4) circle (2pt);
\draw [fill=black] (3,4) circle (2pt);
\draw [fill=black] (-3,4) circle (2pt);
\end{scriptsize}
\end{tikzpicture}
\caption{Real blow-spherical tree at $0$ of the curve $y(y^2-x^3)=0$.}
     \label{fig:realbstree_two}
\end{figure}

It is a direct consequence from Theorem \ref{multiplicities} and the definitions of blow-spherical homeomorphisms and blow-spherical trees at 0 of curves, the following result:
\begin{theorem}\label{curve_classif_two}
Let $X,\widetilde X\subset \R^2$ be two real analytic curves. Let ${\rm Smp}(\partial X')=\{a_1, a_2, ... , a_r\}$ and ${\rm Smp}(\partial \widetilde X')=\{\widetilde a_1, \widetilde a_2, ... , \widetilde a_s\}$. 
Then the following statements are equivalents:
\begin{enumerate}
\item [(1)] $(X,0)$ and $(\widetilde X,0)$ are blow-spherical homeomorphic;
\item [(2)] There is a bijection $\sigma\colon \{1,...,r\}\to \{1,...,s\}$ 
such that for each $i\in \{1,...,r\}$, $k_{X}(a_i)=k_{\widetilde X}(\widetilde a_{\sigma(i)})$;
\item [(3)] There is an isomorphism between the real blow-spherical trees at $0$ of $X$ and $\widetilde{X}$.
\end{enumerate}
\end{theorem}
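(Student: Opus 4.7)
My plan is to reduce all three conditions to the same combinatorial datum: the pair $(\partial X', k_X)$, i.e.\ the finite set of tangent directions of $(X,0)$ together with the valence function. First I would collect the structural facts that make this reduction work. Since $X\subset\R^2$ is a $1$-dimensional real analytic set, Lemma~\ref{branches} expresses $X$ near $0$ as a finite union of analytic branches $\Gamma_1,\dots,\Gamma_N$, each of which has tangent cone equal to either a half-line or a full line through $0$. Consequently $C_X$ is a finite subset of $\mathbb{S}^1$, $\partial X'=C_X\times\{0\}$ is a finite set, and Remark~\ref{density_top} forces ${\rm Smp}(\partial X')=\partial X'$. Moreover, shrinking the representative and using the analytic parametrizations $\gamma_i$, the strict transform $X'$ is a disjoint union (over $a_j\in\partial X'$) of ``stars'' $S_j$, where $S_j$ is the one-point union of $k_X(a_j)$ arcs meeting at $a_j$: each half-branch tangent to the direction $a_j$ lifts to exactly one such arc, and by definition of $k_X(a_j)$ these are precisely the local components of $X'\setminus\partial X'$ near $a_j$.

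For $(1)\Rightarrow(2)$, Theorem~\ref{multiplicities} applied to a blow-spherical homeomorphism $\varphi\colon(X,0)\to(\widetilde X,0)$ yields that the extension $\varphi'$ restricts to a bijection ${\rm Smp}(\partial X')\to {\rm Smp}(\partial \widetilde X')$ preserving $k$; since both sides are the finite sets $\{a_1,\dots,a_r\}$ and $\{\widetilde a_1,\dots,\widetilde a_s\}$, this is the desired $\sigma$ (and in particular $r=s$).

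For $(2)\Rightarrow(1)$, I would build the equivalence directly from $\sigma$ and the star structure. For each $j$ the equality $k_X(a_j)=k_{\widetilde X}(\widetilde a_{\sigma(j)})$ means that the stars $S_j$ and $\widetilde S_{\sigma(j)}$ are homeomorphic, so one fixes a homeomorphism $S_j\to\widetilde S_{\sigma(j)}$ sending $a_j$ to $\widetilde a_{\sigma(j)}$ and arms to arms. Patching these together produces a homeomorphism $\varphi'\colon X'\to\widetilde X'$ that sends $\partial X'$ to $\partial\widetilde X'$. Then $\beta_2\circ\varphi'\circ\beta_2^{-1}$ is a homeomorphism $X\setminus\{0\}\to\widetilde X\setminus\{0\}$ which extends continuously by $0\mapsto 0$ (since $\varphi'$ carries $\partial X'$ to $\partial \widetilde X'$) to a homeomorphism $\varphi\colon(X,0)\to(\widetilde X,0)$; its associated extension to the strict transforms is by construction $\varphi'$, hence $\varphi$ is blow-spherical.

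For $(2)\Leftrightarrow(3)$ it suffices to unpack the definition of the real blow-spherical tree at $0$: it is the two-level rooted tree whose children of the root are in bijection with the points of $\partial X'$ (the tangent half-lines $\ell_k$) and where the vertex associated with $a_j$ has exactly $k_X(a_j)$ children. A rooted-tree isomorphism between two such trees is precisely a bijection of first-level vertices preserving the number of children at each vertex, which is (2). The only real technical point in the whole argument is the star description of $X'$ near $\partial X'$ in the opening step; once that is in place the rest is bookkeeping following the lines of Theorem~\ref{multiplicities}.
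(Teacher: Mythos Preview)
Your proposal is correct and follows essentially the same approach as the paper, which simply states that the result is a direct consequence of Theorem~\ref{multiplicities} and the definitions of blow-spherical homeomorphisms and blow-spherical trees. You have carefully unpacked those definitions---most notably supplying the explicit construction of the blow-spherical homeomorphism for $(2)\Rightarrow(1)$ via the star decomposition of $X'$---which the paper leaves entirely to the reader.
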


\section{Invariance of the multiplicity by blow-isomorphisms}\label{section:multiplicity} 
\subsection{Blow-spherical differential invariance of the multiplicity}\label{subsec:blow-spherical-diff}
\subsubsection{Real case}\label{subsubsec:blow-spherical-diff-real}

\begin{definition}
Let $X,Y\subset \R^n$ be two sets with $0\in X\cap Y$ and let $\varphi\colon X\to Y$ be a blow-spherical homeomorphism. We say that $\varphi$ is blow-spherical differentiable (at $0$) if there is a linear isomorphism $\phi\colon \R^n\to  \R^n$ such that $\nu_{\varphi}(x)=\frac{\phi(x)}{\|\phi(x)\|}$ for all $x\in C_X$.
\end{definition}

\begin{theorem}\label{blow-spherical-diff}
Let $X,Y\subset \R^n$ be two analytic sets with $0\in X\cap Y$. If there exists a blow-spherical differentiable mapping $\varphi\colon X\to Y$, then $m(X)\equiv m(Y)\,{\rm mod\,} 2$
\end{theorem}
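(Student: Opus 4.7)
The strategy is to express both $m(X)$ and $m(Y)$ modulo $2$ as generic projection counts on the odd parts $C_X'$ and $C_Y'$ via Proposition \ref{multcone}, and then use the linear isomorphism $\phi$ underlying $\nu_{\varphi}$ to identify the two counts. First I would set $d = \dim X = \dim Y$ (the equality of dimensions being forced by the blow-spherical homeomorphism, since $\varphi'$ induces a homeomorphism between $C_X$ and $C_Y$, cf.\ Lemma \ref{defi_diferencial}). I would then pick a generic orthogonal projection $\pi \colon \R^n \to \R^d$ with $\ker(\pi) \cap C(Y_{\C},0) = \{0\}$, such that simultaneously $\tilde{\pi} := \pi \circ \phi$ satisfies $\ker(\tilde{\pi}) \cap C(X_{\C},0) = \phi^{-1}(\ker \pi) \cap C(X_{\C},0) = \{0\}$; such $\pi$ exist because $\phi$ is a fixed linear isomorphism and both genericity conditions are open and dense.

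Next, by Proposition \ref{multcone} applied to $Y$ one has
$$
m(Y) \equiv \#(\pi'^{-1}(y) \cap C_Y') \pmod{2}
$$
for a generic $y \in \mathbb{S}^{d-1}$. The key geometric step is to rewrite this count in terms of $C_X'$. By Theorem \ref{multiplicities}, $\varphi'(C_X') = C_Y'$, and blow-spherical differentiability gives $\varphi'|_{\partial X'} = \nu_{\varphi}$ with $\nu_{\varphi}(x) = \phi(x)/\|\phi(x)\|$. Therefore, for $u \in C_Y'$, writing $u = \phi(x)/\|\phi(x)\|$ with $x \in C_X'$, the condition $\pi'(u) = y$ is equivalent to $\pi(\phi(x))/\|\pi(\phi(x))\| = y$, i.e.\ $\tilde{\pi}'(x) = y$. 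Since $\nu_{\varphi}$ is a bijection between $C_X'$ and $C_Y'$, this yields
$$
\#(\pi'^{-1}(y) \cap C_Y') = \#(\tilde{\pi}'^{-1}(y) \cap C_X').
$$

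Finally, I would invoke (a generic-linear-surjection version of) Proposition \ref{multcone} applied to $X$ with the linear surjection $\tilde{\pi}$. The proof of Proposition \ref{multcone} only uses that the projection is a linear surjection whose kernel meets $C(X_{\C},0)$ trivially, together with the construction of the truncated cones $C_{\eta,\varepsilon}(y)$ avoiding the polar locus; none of these ingredients requires $\tilde{\pi}$ to be orthogonal. Alternatively, one may argue by a change of coordinates in the target $\R^d$, noting that the multiplicity $m(X)$ is unchanged and that composing with a linear automorphism of $\R^d$ induces a homeomorphism on $\mathbb{S}^{d-1}$ that bijects generic fibres. Either way, for generic $y$,
$$
\#(\tilde{\pi}'^{-1}(y) \cap C_X') \equiv m(X) \pmod{2},
$$
and combining the displayed congruences yields $m(X) \equiv m(Y) \pmod 2$.

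The step I expect to be the main obstacle is the last one: justifying that the mod~$2$ count formula of Proposition \ref{multcone} remains valid when $\tilde{\pi}$ is a generic linear surjection rather than an orthogonal projection. This should be routine because multiplicity is invariant under linear automorphisms of the ambient space (so one can replace $X$ by $\phi(X)$ and $\tilde{\pi}$ by $\pi$), but one must check that this replacement correctly carries $C_X'$ to $C_{\phi(X)}'$, which follows since the homeomorphism $\phi$ preserves tangent cones and the functions $k_X$, $k_{\phi(X)}$ via its induced action on simple points of the strict transforms.
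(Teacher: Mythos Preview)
Your proposal is correct, and the alternative you sketch at the end is exactly the route the paper takes. Your primary line of argument --- comparing fibre counts for $\pi$ on $C_Y'$ with those for $\tilde\pi=\pi\circ\phi$ on $C_X'$ --- works, but it forces you to justify that Proposition~\ref{multcone} applies to the non-orthogonal surjection $\tilde\pi$. The paper sidesteps this entirely: it sets $A=\phi(X)$, uses Proposition~\ref{analytic_equiv} (a linear isomorphism is analytic) to get $m(X)=m(A)$ exactly, and then observes that $\psi=\phi\circ\varphi^{-1}\colon(Y,0)\to(A,0)$ is a blow-spherical homeomorphism with $\nu_\psi=\mathrm{id}$. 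Theorem~\ref{multiplicities} then gives $C_Y'=C_A'$ as \emph{identical} subsets of $\mathbb{S}^{n-1}$, so a single orthogonal generic projection and one application of Proposition~\ref{multcone} finish the job. The payoff of the paper's route is that no extension or re-examination of Proposition~\ref{multcone} is needed; your primary route reaches the same conclusion but with extra bookkeeping on the projection side that the substitution $X\mapsto\phi(X)$ absorbs in one stroke.
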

\begin{proof}
Since $\varphi$ is a blow-spherical differentiable mapping, then there exists an $\R$-linear isomorphism $\phi\colon\R^{n} \to \R^{n}$, such that $\nu_{\varphi}(x)=\frac{\phi(x)}{\|\phi(x)\|}$ for all $x\in C_X$. Then $A=\phi(X)$ is a real analytic set and by Proposition \ref{analytic_equiv}, $m(X)=m(A)$.

Thus, it is enough to show that $m(Y)\equiv m(A)\, {\rm mod\,} 2$. In order to do this, we consider the mapping $\psi\colon (Y,0)\to (A,0)$ given by $\psi=\phi\circ \varphi^{-1}$. Thus, $\psi$ is a blow-spherical homeomorphism such that $\nu_{\psi}=id$, i.e., the mapping $\psi': Y'\to A'$ is given by
$$
\psi'(x,t)=\left\{\begin{array}{ll}
\left(\frac{\psi(tx)}{\|\psi(tx)\|},\|\psi(tx)\|\right),& t\not=0\\
(x,0),& t=0.
\end{array}\right.
$$
Then, we obtain that $C_Y'=\psi'(C_Y')=C_{A}'$ and, thus, $m(C_Y')\equiv m(C_{A}')\,{\rm mod\,} 2$. By Proposition \ref{multcone}, we obtain $m(Y) \equiv m(A)\,{\rm mod\,} 2$, which finishes the proof.
\end{proof}

As a consequence, we obtain the main result of \cite{Sampaio:2020c}.
\begin{corollary}[Theorem 3.1 in \cite{Sampaio:2020c}]\label{gen_real-gau-lipman}
Let $X, Y \subset  \R^n$ be two real analytic sets  with $0\in X\cap Y$. Assume that there exists a mapping $\varphi\colon (\R^n,0)\to (\R^n,0)$ such that $\varphi\colon (X,0)\to (Y,0)$ is a homeomorphism. If $\varphi$ has a derivative at the origin and $D\varphi_0\colon \R^n\to \R^n$ is an isomorphism, then $m(X)\equiv m(Y)\, {\rm mod\,} 2$.
\end{corollary}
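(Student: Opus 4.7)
\medskip

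\noindent\textbf{Proof proposal for Corollary \ref{gen_real-gau-lipman}.}

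The plan is to reduce the statement to Theorem \ref{blow-spherical-diff} by exhibiting $\varphi$ as a blow-spherical differentiable homeomorphism with associated linear isomorphism $\phi := D\varphi_0$. The argument splits into three forward steps.

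First, I would verify that $\varphi^{-1}\colon (Y,0)\to (X,0)$ is also differentiable at the origin, with derivative $\phi^{-1}$. Writing $\varphi(x)=\phi(x)+\epsilon(x)$ with $\|\epsilon(x)\|=o(\|x\|)$ and using the fact that $\phi$ is an isomorphism, a standard estimate gives $\|y\|\geq c\|x\|$ for $y=\varphi(x)$ with $x$ near $0$, and hence
\[
\varphi^{-1}(y)-\phi^{-1}(y)=-\phi^{-1}(\epsilon(x))=o(\|x\|)=o(\|y\|).
\]
Therefore $(X,0)$ and $(Y,0)$ are differentiable equivalent in the sense of Section \ref{section:regularity}, and Lemma \ref{diffeo_blow} guarantees that $\varphi$ is a blow-spherical homeomorphism $(X,0)\to(Y,0)$.

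Second, I would compute the induced map on tangent links explicitly. Fix $v\in C_X$ and let $(x_k)\subset X\setminus\{0\}$ be any sequence with $x_k\to 0$ and $x_k/\|x_k\|\to v$. Dividing $\varphi(x_k)=\phi(x_k)+\epsilon(x_k)$ by $\|x_k\|$ yields
\[
\frac{\varphi(x_k)}{\|x_k\|}=\phi\!\left(\frac{x_k}{\|x_k\|}\right)+\frac{\epsilon(x_k)}{\|x_k\|}\longrightarrow \phi(v),
\]
which is nonzero since $\phi$ is an isomorphism. Hence $\|\varphi(x_k)\|/\|x_k\|\to \|\phi(v)\|>0$ and
\[
\frac{\varphi(x_k)}{\|\varphi(x_k)\|}\longrightarrow \frac{\phi(v)}{\|\phi(v)\|}.
\]
By definition of the extension $\varphi'$ to the strict transform, this forces $\nu_{\varphi}(v)=\phi(v)/\|\phi(v)\|$ for every $v\in C_X$, so $\varphi$ is blow-spherical differentiable in the sense of Subsection \ref{subsec:blow-spherical-diff}.

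Third, applying Theorem \ref{blow-spherical-diff} immediately yields $m(X)\equiv m(Y)\,{\rm mod}\,2$.

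The bookkeeping step I expect to be the only mild obstacle is the first one, namely ensuring that differentiability of $\varphi$ together with invertibility of $D\varphi_0$ forces differentiability of $\varphi^{-1}$ at the origin as a map between the \emph{subsets} $(Y,0)$ and $(X,0)$; once this is set up, the remainder is a routine limit computation together with a direct invocation of Theorem \ref{blow-spherical-diff}, and no further analytic or topological input is needed.
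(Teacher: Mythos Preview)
Your proposal is correct and follows precisely the route the paper (implicitly) intends: show that $\varphi$ is a blow-spherical differentiable homeomorphism with linear part $\phi=D\varphi_0$, and then invoke Theorem~\ref{blow-spherical-diff}. Regarding your flagged ``mild obstacle,'' it is not a genuine issue: the little-$o$ estimate you derive in step~1, together with the limit computation of step~2 applied symmetrically to $\varphi^{-1}$, already shows that both $\varphi'$ and $(\varphi^{-1})'$ extend continuously to the strict transforms with $\nu_{\varphi^{-1}}=\phi^{-1}(\cdot)/\|\phi^{-1}(\cdot)\|$, so one never needs to manufacture an ambient differentiable extension of $\varphi^{-1}$ nor to rely on the precise formulation of Lemma~\ref{diffeo_blow}.
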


In particular, we obtain the real version of Gau-Lipman's Theorem in \cite{Gau-Lipman:1983}.
\begin{corollary}[Corollary 3.2 in \cite{Sampaio:2020c}]\label{real-gau-lipman}
Let $X,Y\subset \R^n$ be two analytic sets  with $0\in X\cap Y$. If there exists a homeomorphism $\varphi\colon (\R^n,X,0)\to (\R^n,Y,0)$ such that $\varphi$ and $\varphi^{-1}$ are differentiable at $0$, then $m(X)\equiv m(Y)\,{\rm mod\,} 2$
\end{corollary}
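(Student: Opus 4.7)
The plan is to deduce this directly from Corollary \ref{gen_real-gau-lipman}. The only substantive content that must be checked is that, under the hypothesis of the statement, the derivative $D\varphi_0$ is automatically a linear isomorphism of $\R^n$; once this is established, the conclusion $m(X)\equiv m(Y)\,{\rm mod\,}2$ is immediate from Corollary \ref{gen_real-gau-lipman} applied to $\varphi\colon(\R^n,0)\to(\R^n,0)$ restricted as a homeomorphism $(X,0)\to(Y,0)$.

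To verify that $D\varphi_0$ is an isomorphism, I will invoke the chain rule for differentiability at a single point. Since $\varphi$ is a homeomorphism with $\varphi(0)=0$ and $\varphi^{-1}(0)=0$, the compositions $\varphi^{-1}\circ\varphi$ and $\varphi\circ\varphi^{-1}$ coincide with the identity of $\R^n$ on some neighborhood of $0$. The hypotheses give that $\varphi$ is differentiable at $0$ and $\varphi^{-1}$ is differentiable at $\varphi(0)=0$, so the standard chain rule applies to each composition and yields
\[
D(\varphi^{-1})_0\circ D\varphi_0 \;=\; \mathrm{id}_{\R^n} \;=\; D\varphi_0\circ D(\varphi^{-1})_0.
\]
Therefore $D\varphi_0\colon\R^n\to\R^n$ is a linear bijection with two-sided inverse $D(\varphi^{-1})_0$, i.e.\ a linear isomorphism.

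With this, the hypotheses of Corollary \ref{gen_real-gau-lipman} are met verbatim, so $m(X)\equiv m(Y)\,{\rm mod\,}2$. I do not anticipate any genuine obstacle here, since the entire content is the chain-rule reduction; the deeper work has already been absorbed into Theorem \ref{blow-spherical-diff} (via the blow-spherical tangent map $\nu_\varphi$ induced by the isomorphism $\phi=D\varphi_0$ and the invariance of the odd part $C_X'$ of the tangent cone under the induced map on strict transforms).
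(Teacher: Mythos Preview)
Your proof is correct and is precisely the intended deduction: the paper presents this corollary immediately after Corollary~\ref{gen_real-gau-lipman} with the phrase ``In particular,'' giving no separate proof, so the only missing step is the chain-rule observation that differentiability of both $\varphi$ and $\varphi^{-1}$ at $0$ forces $D\varphi_0$ to be invertible, exactly as you argue.
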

\vspace{0.1cm}

\subsubsection{Complex case}\label{subsubsec:blow-spherical-diff-complex}\, 

When $X$ is a complex analytic set, there is a complex analytic set $\Sigma$ with $\dim \Sigma <\dim X$ such that for each irreducible component $X_j$ of tangent cone $C(X,0)$, $X_j\setminus \Sigma$ intersects only one connected component $C_i$ of $Smp(\partial X')$ (see \cite{Chirka:1989}, pp. 132-133). Then, we define $k_X(X_j):=k_X(C_i)$.

\begin{remark}[{\cite[p. 133, Proposition]{Chirka:1989}}]\label{multip-rel}
{\rm Let $X$ be a complex analytic set of $\C^n$ with $0\in X$ and let $X_1,...,X_r$ be the irreducible components of $C(X,0)$. Then} 
\begin{equation*}\label{mult_rel}
m(X,0)=\sum_{j=1}^rk_X(X_j)\cdot m(X_j,0).
\end{equation*}
\end{remark}
\begin{theorem}\label{blow-spherical-diff-complex}
Let $X,Y\subset \C^n$ be two complex analytic sets with $0\in X\cap Y$. If there exists a blow-spherical differentiable mapping $\varphi\colon X\to Y$, then $m(X,0)= m(Y,0)$.
\end{theorem}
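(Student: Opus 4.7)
The plan is to adapt the strategy of Theorem \ref{blow-spherical-diff} to the complex setting, replacing the mod $2$ argument (Proposition \ref{multcone}) with the exact complex relative multiplicity formula of Remark \ref{multip-rel}. Concretely, I would decompose the tangent cones $C(X,0)=X_1\cup\cdots\cup X_r$ and $C(Y,0)=Y_1\cup\cdots\cup Y_s$ into irreducible complex components and aim to produce a bijection $\sigma$ between these component sets preserving both the connected-component multiplicity $k_X(X_j)=k_Y(Y_{\sigma(j)})$ and the local multiplicity $m(X_j,0)=m(Y_{\sigma(j)},0)$ of each piece. Once this is achieved, Remark \ref{multip-rel} immediately gives
$$
m(X,0)=\sum_{j=1}^{r}k_X(X_j)\,m(X_j,0)=\sum_{j=1}^{r}k_Y(Y_{\sigma(j)})\,m(Y_{\sigma(j)},0)=m(Y,0).
$$

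First I would use that the real-linear isomorphism $\phi$ restricts, via the relation $\nu_{\varphi}(x)=\phi(x)/\|\phi(x)\|$, to a homeomorphism $\phi\colon C(X,0)\to C(Y,0)$. Since each complex irreducible variety is also irreducible as a real analytic set (its regular locus is connected), $\phi$ sends each $X_j$ onto some $Y_k$, producing a bijection $\sigma\colon\{1,\ldots,r\}\to\{1,\ldots,s\}$ with $\phi(X_j)=Y_{\sigma(j)}$, and in particular $r=s$. Next I would invoke Theorem \ref{multiplicities}, which gives $\varphi'(\mathrm{Smp}(\partial X'))=\mathrm{Smp}(\partial Y')$ with $k_X=k_Y\circ\varphi'$. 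In the complex analytic setting, the discussion immediately preceding Remark \ref{multip-rel} pairs each irreducible component $X_j$ of $C(X,0)$ with a unique connected component of $\mathrm{Smp}(\partial X')$; because $\varphi'|_{\partial X'}$ is induced by $\phi$, this pairing matches the correspondence coming from Theorem \ref{multiplicities} with $\sigma$, yielding $k_X(X_j)=k_Y(Y_{\sigma(j)})$.

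The crux of the proof is the remaining equality $m(X_j,0)=m(Y_{\sigma(j)},0)$ for each $j$. Here $\phi|_{X_j}\colon X_j\to Y_{\sigma(j)}$ is a real-linear isomorphism between irreducible complex cones, and in particular an outer bi-Lipschitz homeomorphism. I would conclude via the bi-Lipschitz invariance of multiplicity for complex analytic germs (see \cite{Sampaio:2016} and the complex analogues developed in \cite{Sampaio:2020}); for cones this is particularly tractable because the multiplicity equals the degree of the projectivization and can be recovered from the Lipschitz geometry of the link in $\mathbb{S}^{2n-1}$.

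The principal obstacle is precisely this last step: one must invoke (or, in the cone case, re-establish) that the complex multiplicity is a bi-Lipschitz invariant. The first two steps are essentially bookkeeping once the $\phi$-induced bijection between irreducible components of the tangent cones and the $k$-value compatibility from Theorem \ref{multiplicities} have been set up; the final summation then promotes the mod $2$ conclusion of the real case (Theorem \ref{blow-spherical-diff}) to an exact equality, exactly as Remark \ref{multip-rel} does for the multiplicity formula itself.
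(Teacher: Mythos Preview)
Your outline is precisely the paper's strategy: use the real-linear isomorphism $\phi$ to set up a bijection $\sigma$ between the irreducible components of the tangent cones, invoke Theorem~\ref{multiplicities} for $k_X(X_j)=k_Y(Y_{\sigma(j)})$, establish $m(X_j,0)=m(Y_{\sigma(j)},0)$ componentwise, and sum via Remark~\ref{multip-rel}.

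The one place where your justification diverges is the componentwise multiplicity equality. You appeal to bi-Lipschitz invariance of the complex multiplicity, citing \cite{Sampaio:2016} and \cite{Sampaio:2020}, but \cite{Sampaio:2016} concerns bi-Lipschitz invariance of tangent cones, not of multiplicity, and the general bi-Lipschitz invariance of multiplicity for complex analytic germs in arbitrary codimension is not available as a black box. What you actually have is much stronger than a bi-Lipschitz map: $\phi|_{X_j}\colon X_j\to Y_{\sigma(j)}$ is the restriction of a \emph{real-linear isomorphism} of $\R^{2n}$ sending one complex cone onto another. The paper uses exactly this extra structure, citing Proposition~4 of \cite{Sampaio:2019} for $m(X_j,0)=m(Y_j,0)$, and Lemma~A.8 of \cite{Gau-Lipman:1983} (equivalently Proposition~2 of \cite{Sampaio:2019}) for the bijection of irreducible components. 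Your degree-of-projectivization heuristic is in the right direction, but it is the linearity of $\phi$, not merely a Lipschitz bound, that makes it go through cleanly; replace the bi-Lipschitz citation with the linear-map result and your argument is complete.
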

\begin{proof}
Since $\varphi$ is a blow-spherical differentiable mapping, then there exists an $\R$-linear isomorphism $\phi\colon\R^{2n} \to \R^{2n}$ such that $\nu_{\varphi}(x)=\frac{\phi(x)}{\|\phi(x)\|}$ for all $x\in C_X$. Since $\nu_{\varphi}$ is a homeomorphism between $C_X$ and $C_Y$, then $\phi$ is a homeomorphism between $C(X,0)$ and $C(Y,0)$, which implies that $\phi$ maps bijectively the irreducible components of $C(X,0)$ over the irreducible components of $C(Y,0)$ (see Lemma A.8 in \cite{Gau-Lipman:1983} or Proposition 2 in \cite{Sampaio:2019}). Let $X_1,...,X_r$ and $Y_1,...,Y_r$ be the irreducible components of $C(X,0)$ and $C(Y,0)$, respectively, such that $Y_j=\phi(X_j)$, $j=1,...,r$. 
By Proposition 4 in \cite{Sampaio:2019}, $m(X_j,0)=m(Y_j,0)$, for all $j=1,...,r$.
By Theorem \ref{multiplicities}, we obtain $k_X(X_j)=k_Y(Y_j)$, for all $j=1,...,r$. The proof follows from Remark \ref{multip-rel}.
\end{proof}

\begin{corollary}[Theorem 4.1 in \cite{Sampaio:2020c}]\label{generalization_gau-lipman_thm}
Let $X, Y \subset  \C^n$ be two complex analytic sets with $0\in X\cap Y$. Assume that there exists a mapping $\varphi\colon (\C^n,0)\to (\C^n,0)$ such that $\varphi|_X\colon (X,0)\to (Y,0)$ is a homeomorphism. If $\varphi$ has a derivative at the origin (as a mapping from $(\R^{2n},0)$ to $(\R^{2n},0)$) and $D\varphi_0\colon \R^{2n}\to \R^{2n}$ is an isomorphism, then $m(X,0)= m(Y,0)$.
\end{corollary}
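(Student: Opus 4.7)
The plan is to reduce the statement to Theorem \ref{blow-spherical-diff-complex} by checking that the restriction $\varphi|_X\colon X\to Y$ is a blow-spherical differentiable mapping, with associated $\R$-linear isomorphism $\phi := D\varphi_0\colon\R^{2n}\to\R^{2n}$. Writing $\varphi(x)=\phi(x)+R(x)$ with $R(x)=o(\|x\|)$, and using that $\phi$ is an isomorphism, there is $c>0$ with $\|\phi(x)\|\geq c\|x\|$ for all $x$.

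First I would verify that the inverse $\psi := (\varphi|_X)^{-1}\colon Y\to X$ is differentiable at $0$ with $D\psi_0=\phi^{-1}$. For $y=\varphi(x)\in Y$ with $x\in X$ close to $0$, the estimate $\|y\|\geq \|\phi(x)\|-\|R(x)\|\geq \tfrac{c}{2}\|x\|$ forces $o(\|x\|)=o(\|y\|)$; applying $\phi^{-1}$ to $y=\phi(x)+R(x)$ yields
\[
\psi(y)=\phi^{-1}(y)-\phi^{-1}(R(x))=\phi^{-1}(y)+o(\|y\|),
\]
which is the required differentiability.

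Next I would show that $\varphi|_X$ is a blow-spherical homeomorphism. For $(v,t)\in X'$ with $t>0$, the composition $\beta_{2n}^{-1}\circ\varphi\circ\beta_{2n}$ sends $(v,t)$ to $\bigl(\varphi(tv)/\|\varphi(tv)\|,\|\varphi(tv)\|\bigr)$. Since $\varphi(tv)=t\phi(v)+o(t)$ and $\phi(v)\neq 0$ for $v\in C_X\subset\mathbb{S}^{2n-1}$, this extends continuously as $t\to 0^+$ to $\bigl(\phi(v)/\|\phi(v)\|,0\bigr)$. The identical computation applied to $\psi$ using the previous step shows that $\beta_{2n}^{-1}\circ\psi\circ\beta_{2n}$ admits a continuous extension, and these two extensions are mutual inverses. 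Consequently $\varphi|_X$ is a blow-spherical homeomorphism with $\nu_\varphi(v)=\phi(v)/\|\phi(v)\|$ for every $v\in C_X$, so by definition $\varphi|_X$ is blow-spherical differentiable. Theorem \ref{blow-spherical-diff-complex} now yields $m(X,0)=m(Y,0)$.

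The main technical obstacle lies in the first step: one must leverage the invertibility of $D\varphi_0$ to compare the orders of magnitude of $\|x\|$ and $\|\varphi(x)\|$, so that the error term $R(x)$ transfers from $o(\|x\|)$ to $o(\|y\|)$ under the inverse. After that, the identification of $\nu_\varphi$ as the spherical projection of $D\varphi_0$ is routine, and the application of Theorem \ref{blow-spherical-diff-complex} is immediate.
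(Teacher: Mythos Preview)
Your proposal is correct and follows exactly the route the paper intends: the corollary is stated immediately after Theorem~\ref{blow-spherical-diff-complex} with no separate proof, so the implicit argument is precisely to check that $\varphi|_X$ is a blow-spherical differentiable homeomorphism with $\nu_\varphi(v)=D\varphi_0(v)/\|D\varphi_0(v)\|$ and then invoke that theorem. Your write-up supplies the details (comparable-norms estimate, differentiability of the inverse at $0$, continuous extension of the lifted maps) that the paper leaves to the reader, and in fact this is essentially the content of Lemma~\ref{diffeo_blow} specialized to the present situation.
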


\begin{corollary}[Gau-Lipman's Theorem \cite{Gau-Lipman:1983}]\label{complex_gau-lipman_thm}
Let $X ,Y\subset \C^n$ be two complex analytic sets. If there exists a homeomorphism $\varphi\colon (\C^n,X,0)\to (\C^n,Y,0)$ such that $\varphi$ and $\varphi^{-1}$ have a derivative at the origin (as mappings from $(\R^{2n},0)$ to $(\R^{2n},0)$), then $m(X,0)= m(Y,0)$.
\end{corollary}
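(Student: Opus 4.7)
The plan is to deduce this Corollary immediately from the preceding Corollary \ref{generalization_gau-lipman_thm}, which only requires a single mapping $\varphi\colon(\C^n,0)\to(\C^n,0)$ restricting to a homeomorphism $(X,0)\to(Y,0)$, having a derivative at the origin, with $D\varphi_0$ an isomorphism of $\R^{2n}$. The hypotheses here give us all of these almost for free; only the invertibility of $D\varphi_0$ requires a one-line justification.

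First I would observe that the given homeomorphism $\varphi\colon(\C^n,X,0)\to(\C^n,Y,0)$ itself is a mapping of pairs and hence in particular $\varphi\colon(\C^n,0)\to(\C^n,0)$ is a homeomorphism of germs sending $(X,0)$ onto $(Y,0)$. Thus the restriction $\varphi|_X\colon(X,0)\to(Y,0)$ is a homeomorphism, which handles the first hypothesis of Corollary \ref{generalization_gau-lipman_thm}. The second hypothesis (differentiability at $0$ as a map $\R^{2n}\to\R^{2n}$) is explicitly assumed.

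Next I would verify that $D\varphi_0$ is a linear isomorphism of $\R^{2n}$. Since both $\varphi$ and $\varphi^{-1}$ have derivatives at the origin (viewed as mappings between germs at $0$ of $\R^{2n}$), and $\varphi\circ\varphi^{-1}=\mathrm{id}$ on a neighborhood of $0$, the chain rule for mappings that are merely differentiable at a single point yields
\[
D\varphi_0\circ D(\varphi^{-1})_0=D(\mathrm{id})_0=I_{\R^{2n}}.
\]
Therefore $D\varphi_0$ is invertible, with inverse $D(\varphi^{-1})_0$. This is the only step that really uses the differentiability of $\varphi^{-1}$; the rest of the statement only needs the existence of a differentiable $\varphi$ with $\varphi(X)=Y$.

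With all three hypotheses of Corollary \ref{generalization_gau-lipman_thm} verified, I would apply it directly to conclude $m(X,0)=m(Y,0)$. There is no genuine obstacle here; the content of this Corollary over the previously proved one is the purely formal remark that a differentiable inverse automatically makes the derivative at the origin invertible. Everything substantial, in particular the invariance of multiplicity under a blow-spherical differentiable map in the complex case (Theorem \ref{blow-spherical-diff-complex}), has already been established upstream.
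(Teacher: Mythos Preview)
Your proposal is correct and matches the paper's approach: the paper states this as an immediate corollary of Corollary~\ref{generalization_gau-lipman_thm} without further proof, and your chain-rule verification that $D\varphi_0$ is invertible is exactly the one-line check needed to pass from the hypotheses here to those of that corollary.
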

In the next example, we show that Theorem \ref{blow-spherical-diff-complex} is really a generalization of Gau-Lipman's Theorem. 
\begin{example}
Let $X=\{(x,y)\in \C^2;y^2=x^3\}$ and $Y=\{(x,y)\in \C^2;y^2=x^5\}$.
By Theorem 6.1 and its proof in \cite{Sampaio:2020} there exists a blow-spherical homeomorphism $\varphi\colon X\to Y$ such that $\nu_{\varphi}=id_{C_X}$. Thus, $\varphi$ is a blow-spherical differentiable mapping. However, there is no homeomorphism $\psi\colon (\C^2,X,0)\to (\C^2,Y,0)$.
\end{example}

\subsection{Invariance of the multiplicity for real curves}\label{subsec:curves}

We start this Subsection stating that the multiplicity ${\rm mod\,}\,2$ is not a topological invariant even in the case of real analytic curves,  even in a topologically trivial family of real analytic curves, as we can see in the next example.
\begin{example}
For each $t\in \R$, we consider $X_t=\{(x,y)\in\R^2;\, t^2y^2=x^3\}$ and $\varphi_t: \R^2\to \R^2$ given by $\varphi_t(x,y)=((x^3-t^2y^2)^{\frac{1}{3}},y)$. Then, $\varphi_t$ is homeomorphism such that $\varphi_t^{-1}(x,y)=((x^3+t^2y^2)^{\frac{1}{3}},y)$ and $\varphi_t(X_t)=X_0$ for all $t\in \R$. However, $m(X_0)\equiv 1\, {\rm mod\,}\,2$ and $m(X_t)\equiv 0\, {\rm mod\,}\,2$ for all $t\in \R\setminus \{0\}$. 
\end{example}
In the reality, if $\varphi_t$ is given by as in the above example and $f_t:\R^2\to \R$ is given by $f_t(x,y)=x^3-t^2y^2$, we have $f_t=f_0\circ \varphi_t$ for all $t\in \R$.

\begin{proposition}\label{curve}
The multiplicity ${\rm mod\,}\,2$ is an invariant for real analytic curves blow-spherical homeomorphic.
\end{proposition}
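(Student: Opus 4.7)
The plan is to combine Proposition \ref{multcone} (specialized to $d=1$) with Theorem \ref{multiplicities}, after first establishing an antipodal symmetry $C_X'=-C_X'$ that holds for every real analytic curve.

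I would begin with a decomposition into analytic branches. By Lemma \ref{branches}, one writes $(X,0)=\bigcup_{i=1}^{r}(\Gamma_i,0)$ with analytic parametrizing homeomorphisms $\gamma_i\colon(-\varepsilon,\varepsilon)\to\Gamma_i$ of orders $k_i:=\mathrm{ord}_0\gamma_i$. As in the proof of Theorem \ref{half-line}, writing $\gamma_i(t)=t^{k_i}\alpha_i(t)$ with $\alpha_i(0)\neq 0$, the two tangent half-directions of $\Gamma_i$ at $0$ are $u_i:=\alpha_i(0)/\|\alpha_i(0)\|$ and $(-1)^{k_i}u_i$. Consequently, for every $v\in C_X$,
\[
k_X(v)=a_v+2b_v,
\]
where $a_v$ counts the odd-order branches with $v\in\{u_i,-u_i\}$ and $b_v$ counts the even-order branches with $u_i=v$. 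Since an odd-order branch contributes $1$ to each of $a_v$ and $a_{-v}$, one has $a_v=a_{-v}$, and therefore $k_X(v)\equiv k_X(-v)\pmod 2$, giving the antipodal symmetry $C_X'=-C_X'$.

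Next, I would apply Proposition \ref{multcone} with $d=1$: for a generic linear projection $\pi\colon\R^n\to\R$ whose kernel avoids the finite set $C_X'$, one has $\pi'^{-1}(1)\cap C_X'=\{v\in C_X':\pi(v)>0\}$, and by the antipodal symmetry this subset has exactly $|C_X'|/2$ elements. The proposition then yields
\[
m(X)\equiv\tfrac{1}{2}|C_X'|\pmod 2,
\]
and the analogous identity holds for $Y$. A blow-spherical homeomorphism $\varphi\colon(X,0)\to(Y,0)$ induces, by Theorem \ref{multiplicities}, a bijection $\nu_\varphi|_{C_X'}\colon C_X'\to C_Y'$; thus $|C_X'|=|C_Y'|$ and $m(X)\equiv m(Y)\pmod 2$, as desired.

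The one delicate point is the antipodal symmetry $C_X'=-C_X'$: it requires a clean bookkeeping of odd- versus even-order branches when several of them share a tangent half-direction, together with the observation that only odd-order contributions affect the parity of $k_X(v)$. Once this is in hand, Proposition \ref{multcone} and Theorem \ref{multiplicities} deliver the conclusion in a single line.
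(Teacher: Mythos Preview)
Your proof is correct and follows essentially the same route as the paper's: both reduce to the formula $m(X)\equiv\frac{1}{2}\,\#C_X'\pmod 2$ via Proposition~\ref{multcone} and then conclude by the bijection $C_X'\to C_Y'$ furnished by Theorem~\ref{multiplicities}.

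The one genuine addition in your argument is the explicit branch-by-branch verification of the antipodal symmetry $C_X'=-C_X'$. The paper's proof writes $\#C_X'=\#(\pi'^{-1}(1)\cap C_X')+\#(\pi'^{-1}(-1)\cap C_X')$, observes via Proposition~\ref{multcone} that both summands are $\equiv m(X)\pmod 2$, and then asserts $m(X)\equiv\frac{1}{2}\#C_X'\pmod 2$. That last step does not follow from the two summands merely being congruent mod~$2$; it needs them to be \emph{equal}, which in turn is exactly the antipodal symmetry (or, equivalently, the exact constancy of $\varphi_{\pi,C_X'}$ claimed in the statement of Proposition~\ref{multcone}, applied at both points of $\mathbb{S}^0$). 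Your bookkeeping of odd- versus even-order branches makes this point transparent and thereby tightens the argument.
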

\begin{proof}
Let $X, Y\subset \R^n$ be two real analytic curves. Suppose that $X$ and $Y$ are blow-spherical homeomorphic. By Theorem \ref{multiplicities}, $C_X'$ and $C_Y'$ are homeomorphic.

We have that  
$$\# C_X'=\#({\pi'}^{-1}(1)\cap C_X')+\#({\pi'}^{-1}(-1)\cap C_X')$$
and by Proposition \ref{multcone}, we obtain that
$$
m(C_X')\equiv \#({\pi'}^{-1}(1)\cap C_X')\equiv \#({\pi'}^{-1}(-1)\cap C_X')\equiv m(X)\,{\rm mod\,} 2.
$$
Then $m(X)\equiv \frac{1}{2}\# C_X'\,{\rm mod\,} 2$. Similarly, we obtain that $m(Y)\equiv \frac{1}{2}\# C_Y'\,{\rm mod\,} 2.$
Since $C_X'$ and $C_Y'$ are homeomorphic, we have that $m(X)\equiv m(Y)\,{\rm mod\,} 2$.
\end{proof}


\subsection{Invariance of the multiplicity for analytic foliations in the plane}\label{subsec:foliations}
As a consequence of Proposition \ref{curve}, we obtain a result of invariance of the multiplicity for analytic foliations in $\R^2$ (in order to know more about the notation and definitions of the next result, see \cite{Risler:2001}).

Let $Z$ and $Z'$ be real analytic germs of plane vector fields at $0\in \mathbb{R}^2$ inducing germs of foliations $\mathcal F$ and $\mathcal F'$. Assume that in the resolution process of $Z$ (resp. $Z'$) there is no real dicritical component and that $Z$ and $Z'$ are real generalized curves. We denote by $\nu$ (resp. $\nu'$) to be the order of $Z$ (resp. $Z'$) at $0$. 

\begin{corollary}\label{risler_result}
Let $h\colon (\mathbb{R}^2,0)\to (\mathbb{R}^2,0)$ be a homeomorphism which is a topological equivalence between $\mathcal F$ and $\mathcal F'$. If $h$ is a blow-spherical homeomorphism then $\nu\equiv \nu'  \,{\rm mod}\, 2$.
\end{corollary}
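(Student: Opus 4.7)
The plan is to reduce the statement to the curve case already handled by Proposition \ref{curve}, via the separatrix curve of the foliation. Under the standing assumptions that the vector fields $Z$ and $Z'$ are real generalized curves with no real dicritical component in their resolution, Risler's work (see \cite{Risler:2001}) shows that the order $\nu$ of $Z$ at the origin is congruent modulo $2$ to the multiplicity at $0$ of the germ of the real separatrix curve $S$ of the foliation $\mathcal F$, and analogously $\nu' \equiv m(S') \pmod{2}$, where $S'$ is the real separatrix curve of $\mathcal F'$. This reduction is the starting point.

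Next I would use that $h$ is a topological equivalence between $\mathcal F$ and $\mathcal F'$: by definition this means $h$ sends leaves of $\mathcal F$ to leaves of $\mathcal F'$, and since the separatrices are distinguished leaves (they are precisely the leaves whose closure at $0$ is an analytic curve germ through the origin), $h$ must carry the germ of $S$ to the germ of $S'$. Thus $(S,0)$ and $(S',0)$ are homeomorphic germs, and in fact $h|_S\colon (S,0)\to (S',0)$ is a homeomorphism.

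Now I would invoke the blow-spherical hypothesis: since $h\colon (\mathbb{R}^2,0)\to (\mathbb{R}^2,0)$ is a blow-spherical homeomorphism of the ambient germs and $h(S)=S'$, the restriction $h|_S$ is automatically a blow-spherical homeomorphism between $(S,0)$ and $(S',0)$ (the extension $h'$ to the spherical blow-up restricts to an extension of $h|_S$ on the strict transform of $S$). Applying Proposition \ref{curve} to the two real analytic curves $S$ and $S'$ then gives $m(S)\equiv m(S') \pmod 2$. Combining with Risler's formula stated above yields $\nu\equiv \nu' \pmod 2$, as required.

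The only real obstacle is the first step, namely appealing to Risler's identification $\nu \equiv m(S) \pmod 2$ for real generalized curves with no real dicritical component; but this is exactly the framework set up before the corollary, so the ingredient is available. The rest of the argument is essentially the transfer of a curve-invariance result through the separatrix correspondence.
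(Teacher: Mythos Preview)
Your proposal is correct and follows essentially the same route as the paper: pass to the real separatrix curves $S$ and $S'$, use that a topological equivalence $h$ of the foliations sends $(S,0)$ to $(S',0)$ and hence (being an ambient blow-spherical homeomorphism) restricts to a blow-spherical homeomorphism of the curves, then apply Proposition~\ref{curve} and Risler's relation between the order of the vector field and the multiplicity of the separatrix. The only minor imprecision is that the paper obtains the relation via the complexification as $m(S)\equiv m(S_{\C},0)\equiv \nu+1\pmod 2$ rather than $m(S)\equiv \nu\pmod 2$; since the same shift by $1$ occurs for $\nu'$, this does not affect your conclusion.
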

\begin{proof}
Let $Z_{\C}$ (resp. $Z_{\C}'$) be the complexification of $Z$ (resp. $Z'$) and let $S_{\C}$ (resp. $S_{\C}'$) be the separatrix of $Z_{\C}$ (resp. $Z_{\C}'$).
As it was remarked in the proof of Proposition 4.4 in \cite{Risler:2001}, the real separatrix is the union of the real components of the complex separatrix. Then, 
$$
m(S)\equiv m(S_{\C},0)\, {\rm mod}\, 2
$$ 
and 
$$
m(S')\equiv m(S_{\C}',0)\, {\rm mod}\, 2,
$$ 
where $S$ (resp. $S'$) is the separatrix of $Z$ (resp. $Z'$). Moreover, by remark after Lemma 3.4 in \cite{Risler:2001}, we have
$$
 m(S_{\C},0)\equiv v +1 \,{\rm mod}\, 2
$$
and
$$
 m(S_{\C}',0)\equiv v' +1 \,{\rm mod}\, 2.
$$
By Proposition \ref{curve}, $m(S)\equiv m(S')\,{\rm mod}\, 2$, since the hypotheses imply that $S$ and $S'$ are blow-spherical homeomorphic.
Therefore, $\nu\equiv \nu'  \,{\rm mod}\, 2$.
\end{proof}

Thus, we obtain also a real version of Theorem 1.1 in \cite{Rosas:2009}.
\begin{corollary}\label{rudy_result}
Let $h\colon (\mathbb{R}^2,0)\to (\mathbb{R}^2,0)$ be a homeomorphism which is a topological equivalence between $\mathcal F$ and $\mathcal F'$. If $h$ has a derivative at the origin and $Dh_0\colon\mathbb{R}^2\to \mathbb{R}^2$ is an isomorphism, then $\nu\equiv \nu'  \,{\rm mod}\, 2$.
\end{corollary}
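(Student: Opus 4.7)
The plan is to deduce Corollary \ref{rudy_result} directly from Corollary \ref{risler_result} by showing that every homeomorphism $h\colon (\mathbb{R}^2,0)\to(\mathbb{R}^2,0)$ which is differentiable at the origin with $Dh_0$ an isomorphism is automatically a blow-spherical homeomorphism. Since a topological equivalence between $\mathcal F$ and $\mathcal F'$ is already in the hypothesis, the new content is only this differentiable-implies-blow-spherical step.

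First I would verify that $h^{-1}$ is also differentiable at the origin, so that $h$ qualifies as a differentiable equivalence in the sense defined in Section \ref{section:regularity}. Writing $h(x)=Dh_0(x)+r(x)$ with $\|r(x)\|=o(\|x\|)$ and using that $Dh_0$ is a linear isomorphism, there are constants $0<c\leq C$ satisfying $c\|x\|\leq \|Dh_0(x)\|\leq C\|x\|$; the $o(\|x\|)$ estimate then gives $\tfrac{c}{2}\|x\|\leq \|h(x)\|\leq 2C\|x\|$ for $x$ sufficiently close to $0$. Setting $y=h(x)$, the algebraic identity
\[
h^{-1}(y)=(Dh_0)^{-1}(y)-(Dh_0)^{-1}\!\bigl(r(h^{-1}(y))\bigr)
\]
combined with the comparability $\|h^{-1}(y)\|\asymp \|y\|$ just obtained shows that $\|r(h^{-1}(y))\|/\|y\|\to 0$ as $y\to 0$. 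Hence $h^{-1}$ is differentiable at $0$ with derivative $(Dh_0)^{-1}$.

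With both $h$ and $h^{-1}$ differentiable at the origin, $h$ is a differentiable equivalence between $(\mathbb{R}^2,0)$ and $(\mathbb{R}^2,0)$, so by Lemma \ref{diffeo_blow} it is a blow-spherical homeomorphism. Applying Corollary \ref{risler_result} then delivers $\nu\equiv\nu'\pmod 2$. The only technical obstacle is the differentiability of $h^{-1}$, which is a classical inverse-function-type observation but is not quite automatic from $h$ being a topological inverse: the invertibility of $Dh_0$ is crucial in order to obtain the norm comparison that makes the remainder negligible. Everything else is a direct concatenation of Lemma \ref{diffeo_blow} and Corollary \ref{risler_result}.
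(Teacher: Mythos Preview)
Your proposal is correct and follows the route the paper intends: the corollary is stated without proof immediately after Corollary~\ref{risler_result}, and the only missing step is to check that an $h$ with invertible derivative at the origin is a blow-spherical homeomorphism, which you do by verifying that $h^{-1}$ is differentiable at $0$ and then invoking Lemma~\ref{diffeo_blow}. Your inverse-function argument for the differentiability of $h^{-1}$ is the standard one and is sound; this is exactly the detail one needs to apply Lemma~\ref{diffeo_blow}, whose hypothesis requires \emph{both} $h$ and $h^{-1}$ to be differentiable at $0$.
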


\subsection{Invariance of the multiplicity by blow-isomorphism for real surfaces: The embedded case}\label{subsec:surfaces}

In this Subsection, we show that the multiplicity (${\rm mod}\, 2$) of surfaces in $\mathbb{R}^3$ is invariant by embedded blow-spherical homeomorphisms.

\begin{theorem}\label{surface}
Let $X, Y\subset \R^3$ be two real analytic surfaces. If there exists a blow-spherical homeomorphism $\varphi\colon (\R^3,X,0)\to (\R^3,Y,0)$, then $m(X)\equiv m(Y)\,{\rm mod\,}\,2$.
\end{theorem}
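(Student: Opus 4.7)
The plan is to reduce to Valette's formula $m(X)\equiv \delta_{C_X'}\,{\rm mod\,}\,2$ from Lemma \ref{diameter_mult}, and then to show that $\delta_{C_X'}$ is a purely topological invariant of the pair $(\mathbb{S}^2,C_X')$, hence preserved by the sphere homeomorphism induced by $\varphi$.

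First, since $\varphi\colon(\R^3,0)\to(\R^3,0)$ is an ambient blow-spherical homeomorphism with $\varphi(X)=Y$, the induced map on tangent links $\nu_\varphi\colon\mathbb{S}^2\to\mathbb{S}^2$ is itself a homeomorphism, and applying Theorem \ref{multiplicities} to the restriction $\varphi|_X\colon(X,0)\to(Y,0)$ yields $\nu_\varphi(C_X')=C_Y'$. By Lemma \ref{diameter_mult}, $C_X'$ and $C_Y'$ are Euler cycles in $\mathbb{S}^2$ and
$$
m(X)\equiv\delta_{C_X'}\,{\rm mod\,}\,2,\qquad m(Y)\equiv\delta_{C_Y'}\,{\rm mod\,}\,2,
$$
so it suffices to prove $\delta_{C_X'}=\delta_{C_Y'}$.

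The key step is a combinatorial reinterpretation of $\delta_C$. To a closed $1$-dimensional subanalytic set $C\subset\mathbb{S}^2$ associate the \emph{component-adjacency graph} $\Gamma_C$: its vertices are the connected components of $\mathbb{S}^2\setminus C$, and two components $U,V$ are joined by an edge whenever $\overline{U}\cap\overline{V}$ has topological dimension $\geq 1$. My claim is that for $\lambda,\mu\in\mathbb{S}^2\setminus C$ with components $[\lambda],[\mu]$ one has $d_C(\lambda,\mu)=d_{\Gamma_C}([\lambda],[\mu])$. For the inequality $\geq$, an allowed path $\gamma$ crosses $C$ transversally only at $C^1$ regular points, and each such crossing takes $\gamma$ from one complement component into an adjacent one, so $\#I_\gamma$ is a lower bound for the length of a walk in $\Gamma_C$ from $[\lambda]$ to $[\mu]$. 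For the inequality $\leq$, given a shortest sequence $[\lambda]=U_0,U_1,\ldots,U_k=[\mu]$ of pairwise adjacent components, and using that the non-$C^1$-regular locus of the subanalytic set $C$ is $0$-dimensional, one picks $C^1$ regular points $p_i\in\overline{U_i}\cap\overline{U_{i+1}}$, joins $p_{i-1}$ to $p_i$ by a subanalytic $C^1$ arc lying in $\overline{U_i}$ and meeting $C$ transversally only at its endpoints, and joins $\lambda$ to $p_0$ inside $U_0$ and $p_{k-1}$ to $\mu$ inside $U_k$; smoothing the concatenation produces an allowed path with exactly $k$ crossings of $C$.

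With this identification, $\delta_C$ equals the graph-theoretic diameter of $\Gamma_C$, which is manifestly topological in $(\mathbb{S}^2,C)$. Since $\nu_\varphi\colon\mathbb{S}^2\to\mathbb{S}^2$ is a homeomorphism sending $C_X'$ onto $C_Y'$, it induces a bijection between the connected components of $\mathbb{S}^2\setminus C_X'$ and those of $\mathbb{S}^2\setminus C_Y'$ that preserves adjacency (topological dimension of a closure intersection is a homeomorphism invariant), yielding an isomorphism $\Gamma_{C_X'}\cong\Gamma_{C_Y'}$ and hence $\delta_{C_X'}=\delta_{C_Y'}$. Combined with the opening display this gives $m(X)\equiv m(Y)\,{\rm mod\,}\,2$. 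The main obstacle I anticipate is the subanalytic $C^1$ construction in the inequality $d_C(\lambda,\mu)\leq d_{\Gamma_C}([\lambda],[\mu])$: concretely, one must check that the pieces of the path can be arranged to be subanalytic, $C^1$ at the junctions, and transverse to $C$ exactly at the chosen $p_i$'s. Everything else follows directly from the cited results.
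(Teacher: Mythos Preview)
Your argument is correct and follows the same overall strategy as the paper: use the ambient blow-spherical homeomorphism to get a sphere homeomorphism $\nu_\varphi\colon(\mathbb{S}^2,C_X')\to(\mathbb{S}^2,C_Y')$, then invoke Lemma \ref{diameter_mult} and show that $\delta_{C_X'}=\delta_{C_Y'}$. The difference lies only in how this last equality is established.

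The paper works directly with paths: it pushes a minimizing allowed path $\gamma$ for $C_X'$ through $\nu_\varphi$ to obtain a path $\beta$ that is only \emph{topologically} transverse to $C_Y'$ at $C^0$ regular points, and then uses density of ${\rm Reg}_1(C_Y')$ to perturb $\beta$ into a genuine allowed path with the same number of crossings. This gives $d_{C_Y'}(\nu_\varphi(\lambda),\nu_\varphi(\mu))\leq d_{C_X'}(\lambda,\mu)$, and the reverse inequality follows by symmetry. Your route instead factors through the component-adjacency graph $\Gamma_C$, proving $d_C=d_{\Gamma_C}$ once and for all and then observing that $\Gamma_C$ is manifestly a homeomorphism invariant of the pair $(\mathbb{S}^2,C)$. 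The two arguments are essentially dual: the perturbation step in the paper (from ``almost allowed'' to allowed) is exactly your inequality $d_C\leq d_{\Gamma_C}$ specialized to the walk coming from $\beta$, and your inequality $d_C\geq d_{\Gamma_C}$ is the trivial direction that the paper uses implicitly. Your formulation has the advantage of isolating a clean combinatorial invariant, while the paper's has the advantage of avoiding the bookkeeping of defining $\Gamma_C$ and checking the adjacency criterion. The technical obstacle you flag (building a subanalytic $C^1$ path through the chosen regular points $p_i$, transverse only there) is precisely the content of the paper's perturbation step, which is handled there by the same density argument you invoke; in dimension two it is routine since ${\rm Sing}_1(C)$ is finite and each $U_i$ is an open connected planar domain. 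One minor wording issue: in your $\geq$ direction you write ``$\#I_\gamma$ is a lower bound for the length of a walk,'' but what you mean (and use) is that $\#I_\gamma$ is the length of \emph{some} walk, hence an upper bound for the graph distance.
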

\begin{proof}
By Theorem \ref{multiplicities}, we have a homeomorphism $\psi\colon (\mathbb{S}^2,C_X')\to (\mathbb{S}^2,C_{Y}')$. 

Given $\lambda,\mu\in \mathbb{S}^2\setminus C_X'$, let $\gamma \colon [0, 1] \to \mathbb{S}^n$ be an allowed path for $C_X'$ connecting $\lambda$ and $\mu$ such that $lg_{C_{Y}'}(\gamma)=d_{C_X'}(\lambda;\mu)$. Let $\beta=\psi\circ \gamma$.
Thus, the set $I_{\beta} := \{t \in [0, 1];\, \beta(t) \in C_{Y}'\}$ is finite and for every $t\in I_{\beta},$ the point $\beta(t)$ is a $C^0$ regular point of $C_{Y}'$ at which the mapping $\beta$ is topological transverse to $C_{Y}'$ (i.e., for each point $t\in I_{\beta}$, $p=\beta(t)$ is a $C^0$ regular point of $C$ and there exist open subsets $U\subset \mathbb{S}^2$ and $V\subset \R^2$ such that $(p,0)\in U\times V$ and there exists a homeomorphism $\varphi\colon U\to V$ satisfying $\varphi(p)=0$, $\varphi(U\cap C_{Y}')=V\cap \{(x_1,x_2)\in \R^2;x_2=0\}$ and $\varphi\circ \beta(s)=(0,s)$ in $(t-\delta,t+\delta)$ for some $\delta>0$).

Since ${\rm Reg}_{1}(C_{Y}')$ is dense in $C_{Y}'$, we can find a subanalytic path $\alpha \colon [0, 1] \to \mathbb{S}^2$ connecting $\tilde\lambda=\psi(\lambda)$ and $\tilde\mu=\psi(\mu)$, which is an allowed path for $C_{Y}'$ and $lg_{C_{Y}'}(\alpha)=\# I_{\beta}$ (see Figures \ref{fig1} and \ref{fig2} below).
\begin{figure}[h]
\begin{tabular}{cc}
\begin{minipage}[c][8cm][c]{6cm}
    \centering \includegraphics[scale=0.3]{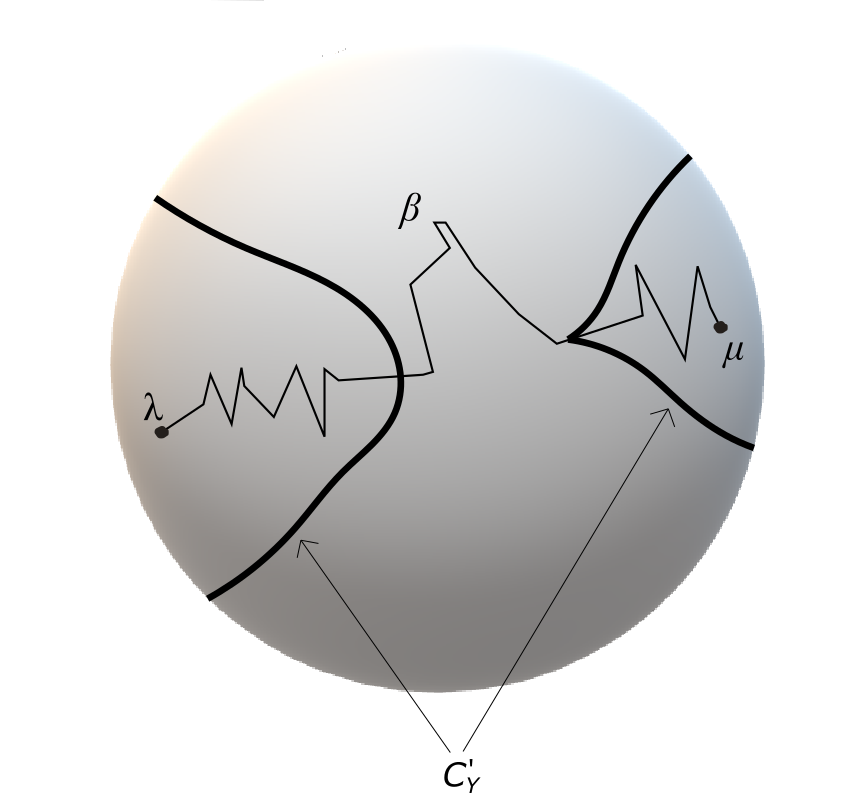}
    \caption{Almost allowed path}\label{fig1}
  \end{minipage} 
  & 
  \begin{minipage}[c][8cm][c]{6cm}
    \centering \includegraphics[scale=0.3]{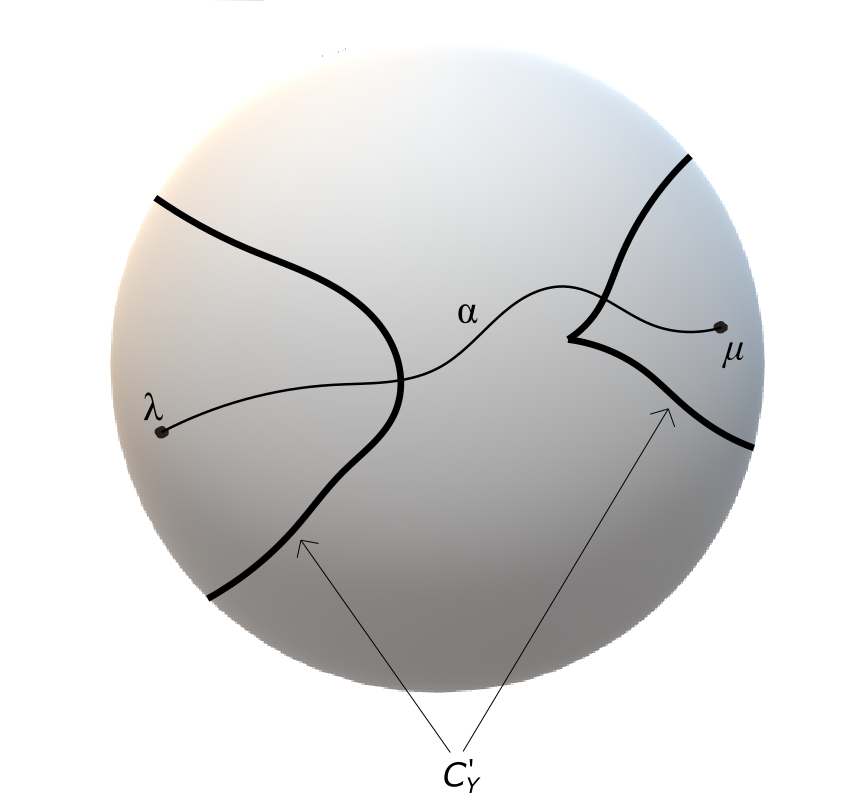}
    \caption{Allowed path}\label{fig2}
  \end{minipage}
\end{tabular}
\end{figure}

Thus, we obtain that $d_{C_X'}(\lambda;\mu)\geq d_{C_{Y}'}(\psi(\lambda);\psi(\mu))$. Similarly, we obtain also $d_{C_X'}(\lambda;\mu)\leq d_{C_{Y}'}(\psi(\lambda);\psi(\mu))$. Therefore, $\delta_{C_X'}=\delta_{C_{Y}'}$ and by Lemma \ref{diameter_mult}, $m(X)\equiv m(Y)\,{\rm mod\,}\,2$.
\end{proof}

\begin{remark}\label{rem:gen_valette_one}
In Theorem 5.1 in \cite{Valette:2010}, G. Valette showed that if $X, Y\subset \R^3$ are two real analytic surfaces and $\varphi\colon (\R^3,0)\to (\R^3,0)$ is a subanalytic outer bi-Lipschitz homeomorphism such that $\varphi(X)=Y$ then $m(X)\equiv m(Y)\,{\rm mod\,}\,2$. Theorem \ref{surface} is more general than Theorem 5.1 in \cite{Valette:2010}, since any subanalytic outer bi-Lipschitz homeomorphism is a blow-spherical homeomorphism (see Proposition 3.8 in \cite{Sampaio:2020}) and by Examples \ref{blow-iso_no_bi-Lip} and \ref{blow-iso_no_bi-Lip_two}, we know that $X_{\frac{5}{2}}$ and $X_{\frac{7}{2}}$ are not outer bi-Lipschitz homeomorphic, but they have the same multiplicity and there exists a blow-spherical homeomorphism $\varphi\colon (\R^3,0)\to (\R^3,0)$ such that $\varphi(X_{\frac{5}{2}})=X_{\frac{7}{2}}$. 
\end{remark}

\subsection{Invariance of the multiplicity by blow-isomorphism for real surfaces: The non-embedded case}\label{subsec:surfaces-non-embedded}

In this Subsection, we show that the multiplicity (${\rm mod}\, 2$) of surfaces in $\mathbb{R}^3$ is invariant by blow-spherical homeomorphisms which are not necessary embedded, but with an additional hypothesis.

\begin{theorem}\label{surface-non-embedded}
Let $X, \tilde X\subset \R^3$ be two real analytic surfaces. Assume that there exists a blow-spherical homeomorphism $\varphi\colon (X,0)\to (\tilde X,0)$ such that $\nu_{\varphi}(-v)=-v$ whenever $v\in C_X'$. Then $nac(X)=nac(\tilde X)$ and $m(X)\equiv m(\tilde X)\,{\rm mod\,}\,2$.
\end{theorem}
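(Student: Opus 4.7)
The plan is to push the allowed-circle structure on $C_X'$ forward to $C_{\tilde X}'$ via $\nu:=\nu_{\varphi}$, and then to read off both $nac$ and the multiplicity modulo $2$ from that structure.

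\textbf{Step 1 (setup).} By Theorem \ref{multiplicities}, $\nu$ restricts to a homeomorphism between $C_X'$ and $C_{\tilde X}'$. The odd part $C_X'$ is $a$-invariant because $k_X(v)=k_X(-v)$ always holds for a real analytic surface, so the symmetry hypothesis translates into the statement that $\nu$ commutes with the antipodal map on $C_X'$, that is $\nu\circ a=a\circ \nu$ on $C_X'$. In particular, for any embedding $e\colon\mathbb{S}^1\to C_X'$ the composite $\nu\circ e$ is an embedding into $C_{\tilde X}'$ satisfying $a\circ(\nu\circ e)=\nu\circ(a\circ e)$, and $\nu$ preserves finiteness of pairwise intersections.

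\textbf{Step 2 (equality of $nac$).} Given an allowed set $A\subset C_X'$ with family $E(A)$, set $A':=\nu(A)\subset C_{\tilde X}'$ and $E':=\{\nu\circ e\colon e\in E(A)\}$. I check the three axioms of an allowed family: (i) $A'=\bigcup_{e\in E(A)}(\nu\circ e)(\mathbb{S}^1)$ is immediate; (ii) if $e_i\neq e_j$, then $(\nu\circ e_i)(\mathbb{S}^1)\cap(\nu\circ e_j)(\mathbb{S}^1)=\nu\bigl(e_i(\mathbb{S}^1)\cap e_j(\mathbb{S}^1)\bigr)$ is finite; (iii) an image $(\nu\circ e)(\mathbb{S}^1)$ is $a$-invariant exactly when $e(\mathbb{S}^1)$ was, and otherwise is paired with $(\nu\circ(a\circ e))(\mathbb{S}^1)\in E'$ by antipodal equivariance. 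Hence $E'$ is an allowed family with $\#E'=\#E(A)$, so $nac(X)\leq nac(\tilde X)$, and by symmetry $nac(X)=nac(\tilde X)$.

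\textbf{Step 3 (multiplicity modulo $2$).} By Lemma \ref{diameter_mult}, $C_X'$ is an Euler cycle, so in any compatible subanalytic stratification every $0$-stratum meets an even number of $1$-strata. An Eulerian-circuit argument on the incidence graph decomposes $C_X'$ as $\bigcup_{i=1}^r S_i$, with each $S_i$ a subanalytic Jordan curve and $S_i\cap S_j$ finite for $i\neq j$. Using $a$-invariance of $C_X'$ one may further pair the circuits so that each $S_i$ is either $a$-invariant or comes with a partner $S_j=a(S_i)$. Then $\{S_1,\dots,S_r\}$ realizes an allowed family exhausting $C_X'$, and for a generic projection $\pi$ and a generic $y\in\mathbb{S}^1$ the semicircle $\pi'^{-1}(y)$ avoids all pairwise intersections, so Proposition \ref{multcone} together with Lemma \ref{mult_allowed} yields
\[
m(X)\equiv \#\bigl(\pi'^{-1}(y)\cap C_X'\bigr)\equiv \sum_{i=1}^{r}m(S_i)\equiv n_{\mathrm{inv}}\pmod 2,
\]
where $n_{\mathrm{inv}}$ counts the $a$-invariant $S_i$: each such contributes $1$ by Lemma \ref{mult_allowed}(a), and each antipodal pair contributes $0$ by Lemma \ref{mult_allowed}(b). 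Pushing the decomposition forward by $\nu$ produces an analogous allowed decomposition of $C_{\tilde X}'$ with the same value of $n_{\mathrm{inv}}$, so $m(\tilde X)\equiv n_{\mathrm{inv}}\pmod 2$ and therefore $m(X)\equiv m(\tilde X)\pmod 2$.

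\textbf{Main obstacle.} The delicate point is Step 3: producing an $a$-equivariant decomposition of the Euler cycle $C_X'$ into subanalytic Jordan curves with finitely many pairwise intersections, in a way that respects the free-or-trivial action of $a$ on the circuits. Once that decomposition is in hand, the multiplicity count follows by a routine generic-projection transversality argument from Lemma \ref{mult_allowed}, and the equivariance of $\nu$ transports everything to $C_{\tilde X}'$.
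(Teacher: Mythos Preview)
Your overall architecture matches the paper's: reduce everything to an $a$-equivariant decomposition of $C_X'$ into subanalytic Jordan curves with pairwise finite intersection, then read off both $nac$ and $m(X)\bmod 2$ from that decomposition via Lemma~\ref{mult_allowed}, and transport by $\nu_\varphi$. Your Step~2 is essentially the paper's Claim~4, and your multiplicity count in Step~3 is the same computation the paper performs at the very end.

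The gap is exactly the one you flag: you \emph{assert} that ``using $a$-invariance of $C_X'$ one may further pair the circuits so that each $S_i$ is either $a$-invariant or comes with a partner $S_j=a(S_i)$'', but this does not follow from a plain Eulerian-circuit decomposition. If $\{S_1,\dots,S_r\}$ is obtained by an arbitrary Euler decomposition, then $\{a(S_1),\dots,a(S_r)\}$ is \emph{another} Euler decomposition of $C_X'$, and there is no reason for $a(S_i)$ to coincide with any $S_j$; you cannot simply ``pair'' them after the fact. This is precisely the content of the paper's Claims~1--3. Claim~1 shows that starting from a single circle $S_1$ and $S_2=a(S_1)$ one can surgically rebuild an allowed pair when $S_1\cap S_2$ is infinite --- already a nontrivial construction. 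Claim~3 is the heart of the matter: taking a \emph{maximal} allowed set $A\subset C_X'$, one shows $A=C_X'$ by a contradiction argument that compares two allowed paths from $\lambda$ to $-\lambda$, one meeting a leftover arc of $L=\overline{C_X'\setminus A}$ and one avoiding $L$ entirely, and invokes path-independence of $lg\bmod 2$ (Lemma~\ref{path_independence}, i.e.\ Valette's Propositions~2.4, 3.2, 3.3) for both Euler cycles $A$ and $C_X'$. Without this, you have not established that an allowed family can exhaust $C_X'$, and your formula $m(X)\equiv n_{\mathrm{inv}}$ has no decomposition to be applied to.

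A way to close the gap while keeping your more direct line: pass to the quotient $q\colon\mathbb{S}^2\to\mathbb{RP}^2$. Since $a$ is free, $q(C_X')$ is again a subanalytic Euler cycle; run the Eulerian-circuit decomposition \emph{there}, and lift each Jordan curve. A null-homotopic loop lifts to two disjoint circles swapped by $a$, and a homotopically nontrivial loop lifts to a single $a$-invariant circle (the complementary discs are swapped by $a$ because a free involution has no fixed disc, by Brouwer). This yields the required $a$-equivariant exhausting allowed family directly, without the maximality/contradiction step. Note also, for the pushforward in your last line, that $\nu_\varphi(S_i)$ is automatically subanalytic: any Jordan curve inside the $1$-dimensional subanalytic set $C_{\tilde X}'$ is a finite union of closures of $1$-strata. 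With either fix, your argument goes through and yields the same conclusion as the paper.
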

\begin{proof}
It follows from Theorem \ref{multiplicities} that $C_X'\not=\emptyset$ if and only if $C_{\tilde X}'\not=\emptyset$. 
By hypothesis $\psi=\nu_{\varphi}\colon C_X'\to C_{\tilde X}'$ is a homeomorphism such that $\nu_{\varphi}(-v)=-v$ for all $v\in C_X'$.
Thus, there is no embedding $e\colon\mathbb{S}^1\to C_X'$ if and only if there is no embedding $\tilde e\colon\mathbb{S}^1\to C_{\tilde X}'$. 

%
Therefore, we may assume that there are embeddings $e\colon\mathbb{S}^1\to C_X'$ and $\tilde e\colon\mathbb{S}^1\to C_{\tilde X}'$.

\begin{claim}\label{exists_allowed}
There is an allowed set $S$ of $C_X'$.
\end{claim}
\begin{proof}[Proof of Claim \ref{exists_allowed}]
We are assuming that there exists a set $S_1\subset C_X'$ which is homeomorphic to $\mathbb{S}^1$. Let $S_2=a(S_1)$.
If $S_2=S_1$ or $S_1\cap S_2 =\emptyset$,  then $S=S_1\cup S_2$ is an allowed set of $C_X'$. Thus, let us suppose that $S_1\cap S_2 \not =\emptyset$ and $S_2\not=S_1$ and fix $x_0\in S_1\cap S_2$. In particular, $-x_0\in S_1\cap S_2$. For each $i=1,2$ we consider a parametrization $e_i\colon [0,1]\to S_i$ such that $e_i(0)=e_i(1)=x_0$. Then, there exist open intervals $I_j=(a_j,b_j)$ (resp. $J_j=(c_j,d_j)$) such that 
$e_1^{-1}(S_1\setminus (S_1\cap S_2))=\bigsqcup \limits_{j=1}^k I_j$ (resp. $e_2^{-1}(S_2\setminus (S_1\cap S_2))=\bigsqcup \limits_{j=1}^k J_j$), where $ b_j\leq \inf a_{j+1}$ (resp. $\sup d_j\leq \inf c_{j+1}$) for all $j=1,\cdots, k-1$. By changing the orientation of $e_2$, if necessary, we can suppose that $e_1(a_j)=e_2(c_j)$ and $e_1(b_j)=e_2(d_j)$ for all $j=1,\cdots, k$. For each $j$, we consider a positive oriented homeomorphism $\phi_j\colon [a_j,b_j]\to [c_j,d_j]$.
We define $\tilde e_{1}, \tilde e_{2}\colon [0,1] \to C_X'$ in the following way
$$
\tilde e_{1}(t)=\left\{\begin{array}{ll}
                          e_{1}(t),& \mbox{ if } t\not\in I_j, \,\,\forall j;\\
                          e_{1}(t),& \mbox{ if } t\in [a_j,b_j] \mbox{ and } a(e_1(I_j))\subset e_{1}([0, 1]);\\
                          e_{2}\circ \phi_j(t),& \mbox{ if } t\in [c_j,d_j] \mbox{ and } a(e_1(I_j))\not\subset e_{1}([0, 1])
                         \end{array}\right.
$$
and
$$
\tilde e_{2}(t)=\left\{\begin{array}{ll}
                          e_{2}(t),& \mbox{ if } t\not\in J_j, \,\,\forall j;\\
                          e_{2}(t),& \mbox{ if } t\in [c_j,d_j] \mbox{ and } a(e_2(J_j))\subset e_{2}([0, 1]);\\
                          e_{1}\circ \phi_j^{-1}(t),& \mbox{ if } t\in [c_j,d_j] \mbox{ and } a(e_2(J_j))\not\subset e_{2}([0, 1]).
                         \end{array}\right.
$$
Now, we note that $\tilde S_1=\tilde e_1([0,1])$ and $\tilde S_2=\tilde e_2([0,1])$ are allowed sets of $C_X'$.
\end{proof}

\begin{claim}\label{decomposition}
There is a decomposition $C:=C_X'=L\cup A$ satisfying the following:
\begin{enumerate}
 \item $A$ and $L$ are $a$-invariant closed subanalytic sets and $L\cap A$ is a finite set; 
 \item $A$ is an allowed set with $nac(A)=nac(C)$;
 \item $\mathbb{S}^2\setminus L$ is path connected;
\end{enumerate}
\end{claim}
\begin{proof}[Proof of Claim \ref{decomposition}]
Since $nac(C)<+\infty$, there is a subset $A\subset C$ such that it is a maximal allowed set, let us write $E(A)$ as $E(A)=\{e_1,\cdots, e_{2k},$  $e_{2k+1},\cdots, e_{2k+m}\}$ such that $nac(C)=2k+m$, $a(Im(e_{k+j}))=Im(e_j)$ for $j\in\{1,\cdots,k\}$ and $a(Im(e_r))=Im(e_r)$ for $r\in\{2k+1,\cdots,2k+m\}$. Since $C$ and $A$ are $a$-invariant subanalytic sets and $A$ is maximal, there is no embedding $e\colon \mathbb{S}^1\to \overline{C\setminus A}$. Then, we define $L:=\overline{C\setminus A}$. Now, it is easy to verify that $L$ and $A$ satisfy the claim.
\end{proof}

\begin{claim}\label{tangent_allowed}
$A=C_X'$.
\end{claim}
\begin{proof}[Proof of Claim \ref{tangent_allowed}]
Suppose that $L\not\subset A$. Since $C_X'$ has no isolated point, there exists a subset $I\subset L\setminus A$ that is homeomorphic to $(0,1)$. 
Let $\lambda\in\mathbb{S}^2\setminus C_X'$ and let $\gamma : [0, 1] \to \mathbb{S}^3$ be an allowed path for $C_X'$ such that $\gamma(0)=\lambda$ and $\gamma(1)=-\lambda$. Since $\mathbb{S}^2\setminus L$ is path connected, we can assume that $\Gamma\subset \mathbb{S}^2\setminus (L\setminus I)$ and $\gamma$ meets transversally $I$ at exactly one point, where $\Gamma=\gamma ([0, 1])$.
Let $\beta : [0, 1] \to \mathbb{S}^n$ be an allowed path for $C_X'$ such that $\beta(0)=\lambda$, $\beta(1)=-\lambda$ and $\beta ([0, 1])\subset \mathbb{S}^2\setminus L$. By Lemma \ref{euler_cycle}, $A$ is an Euler cycle and, then, $\gamma$ and $\beta$ are also allowed paths for $A$ and by Lemma \ref{path_independence}, 
$$\tilde d_{A}(-\lambda,\lambda)\equiv lg_{A}(\gamma) \equiv lg_{A}(\beta) \,{\rm mod\,} 2$$
and 
$$\tilde d_{C_X'}(-\lambda,\lambda)\equiv lg_{C_X'}(\gamma) \equiv lg_{C_X'}(\beta) \,{\rm mod\,} 2.$$
However, by the choice of $\gamma$ and $\beta$, we have $lg_{C_X'}(\gamma)\equiv lg_{A}(\gamma) +1\,{\rm mod\,} 2$ and  $lg_{C_X'}(\beta) \equiv lg_{A}(\beta) \,{\rm mod\,} 2$, which is a contradiction. Therefore $L\subset A$ and this finish the proof of Claim \ref{tangent_allowed}.
\end{proof}
Similarly, $\tilde A=C_{\tilde X}'$ is a maximal allowed set of $C_{\tilde X}'$.
\begin{claim}\label{nac_equal}
$nac(X)= nac(\tilde X)$.
\end{claim}
\begin{proof}[Proof of Claim \ref{nac_equal}]
We write $C_X'=\bigcup\limits_{r=1}^{2k+m} A_r$ (resp. $C_{\tilde X}'=\bigcup\limits_{r=1}^{2\tilde k+\tilde m} \tilde A_r$) such that each $A_i$ (resp. $\tilde A_i$) is homeomorphic to $\mathbb{S}^1$, $A_r\cap A_{r'}$ (resp. $\tilde A_r\cap \tilde A_{r'}$) is a finite set whenever $r\not =r'$ (resp. $\tilde r\not =\tilde r'$), $a(A_r)=A_{k+r}$ whenever $r\in\{1,\cdots,k\}$ (resp. $a(\tilde A_{\tilde r})=\tilde A_{\tilde k+\tilde r}$ whenever $\tilde r\in\{1,\cdots,\tilde k\}$), $a(A_r)=A_{r}$ whenever $r\in\{2k+1,\cdots,2k+m\}$ (resp. $a(\tilde A_{\tilde r})=\tilde A_{\tilde r}$ whenever $\tilde r\in\{2\tilde k+1,\cdots,2\tilde k+m\}$) and $nac(X)=2k+m$ (resp. $nac(\tilde X)=2\tilde k+\tilde m$). 

Since $a\circ \psi=\psi\circ a$, $\bigcup\limits_{r=1}^{2k+m} \psi(A_r)$ is an allowed set of $C_{\tilde X}'$, which implies that $nac(X)\leq nac(\tilde X)$. By using $\psi^{-1}$ instead of $\psi$, we obtain $nac(\tilde X)\leq nac(X)$. Therefore, $nac(X)= nac(\tilde X)$.
\end{proof}
Let $\lambda\in\mathbb{S}^2\setminus C_X'$ and let $\gamma : [0, 1] \to \mathbb{S}^2$ be an allowed path for $C_X'$ such that $\gamma(0)=\lambda$ and $\gamma(1)=-\lambda$. Since $\mathbb{S}^2\setminus L$ is path connected, we can assume that $\gamma ([0, 1])\subset \mathbb{S}^2\setminus L$. By Lemma \ref{euler_cycle}, $A$ is an Euler cycle and, then, by Lemma \ref{mult_dist}, $\tilde d_{A}(-\lambda,\lambda)\equiv \tilde d_{C_X'}(-\lambda,\lambda)\equiv m(X)\,{\rm mod\,} 2$ and $\gamma$ is an allowed path for $A$. 
Then, by Lemma \ref{mult_allowed}, we have
$$
\tilde d_{A}(-\lambda,\lambda)\equiv \sum\limits_{i=1}^{2k+m}\tilde d_{A_i}(-\lambda,\lambda)\equiv m\, \,{\rm mod\,} 2.
$$
Thus, $nac(X)\equiv m(X)\,{\rm mod\,} 2$, since $nac(X)=2k+m$. Similarly, we obtain $nac(\tilde X)\equiv m(\tilde X)\,{\rm mod\,} 2$.
Since $nac(X)=nac(\tilde X)$, we have that $m(X)\equiv m(\tilde X)\,{\rm mod\,} 2$.
\end{proof}

A consequence of above result is presented in the next section (see Corollary \ref{appl_teo_surface}).

Next example shows that Theorem \ref{surface-non-embedded} is not a direct consequence of the results in \cite{Valette:2010}.
\begin{example}
Let $X=\{x,y,z)\in\mathbb{R}^3; (x^2+y^2-z^2)(x^2+z^2-\frac{y^2}{4})=0\}$ and $Y=\{x,y,z)\in\mathbb{R}^3; (x^2+y^2-z^2)(x^2+y^2-\frac{z^2}{4})=0\}$. The mapping $\psi\colon X\to Y$ given by
$$
\psi(x,y,z)=\left\{\begin{array}{ll}
(x,y,z),&\mbox{ if } x^2+y^2-z^2=0\\
(x,z,y),&\mbox{ if } x^2+z^2-\frac{y^2}{4}=0
\end{array}\right.
$$
is a bow-spherical homeomorphism such that $\nu_{\psi}(-v)=-\nu_{\psi}(v)$ for all $v\in X$. However, there is no bi-Lipschitz homeomorphism $\varphi\colon (\R^3,0)\to (\R^3,0)$ such that $\varphi(X)=Y$.
\end{example}

\subsection{Invariance of the multiplicity by arc-analytic blow-isomorphism}\label{subsec:arc-analytic}

\begin{definition}
Let $C\subset \mathbb{S}^n$ be an Euler cycle. 
A path $\gamma \colon [0, 1] \to \mathbb{S}^n$ is said to be an {\bf almost allowed path for} $C$ if the set $I_{\gamma} = \{t \in [0, 1];\, \gamma(t) \in C\}$ is finite and for every $t\in I_{\gamma},$ the point $\gamma(t)$ is a $C^0$ regular point of $C$ at which the mapping $\gamma$ is topological transverse to $C$ (i.e., for each point $t\in I_{\gamma}$, $p=\gamma(t)$ is a $C^0$ regular point of $C$ and there exist open subsets $U\subset \mathbb{S}^n$ and $V\subset \R^n$ such that $(p,0)\in U\times V$ and there exists a homeomorphism $\varphi\colon U\to V$ satisfying $\varphi(p)=0$, $\varphi(U\cap C)=V\cap \{(x_1,...,x_n)\in \R^n;x_n=0\}$ and $\varphi\circ \gamma(s)=(0,...,0,s)$ in $(t-\delta,t+\delta)$ for some $\delta>0$).
\end{definition}
\begin{remark}
If $\gamma \colon [0, 1] \to \mathbb{S}^n$ is an almost allowed path (for $C$) then it is an allowed path (for $C$).
\end{remark}

If $\gamma$ is an almost allowed path (for $C$), we define also
$$
lg_C(\gamma)=\#I_{\gamma}.
$$

For $\lambda, \mu\in \mathbb{S}^n\setminus C$, we define
$$
\widetilde{d}_C(\lambda;\mu)=\min\{lg_C(\gamma);\, \gamma \mbox{ is an almost allowed path joining }\lambda\mbox{ and }\mu\}
$$

%
\begin{remark}\label{key_remark}
Let $C\subset \mathbb{S}^n$ be an Euler cycle and let $\lambda, \mu\in \mathbb{S}^n\setminus C$. It is clear that $\widetilde{d}_C(\lambda, \mu)\leq d_C(\lambda, \mu)$. If $\gamma \colon [0, 1] \to \mathbb{S}^n$ is an almost allowed path for $C$ connecting $\lambda$ and $\mu$ such that $\widetilde{d}_C(\lambda, \mu)=lg_C(\gamma)$, since ${\rm Reg}_{1}(C)$ is dense in $C$, we can find a path $\tilde\gamma \colon [0, 1] \to \mathbb{S}^n$ connecting $\lambda$ and $\mu$, which is an allowed path for $C$ and $lg_C(\tilde\gamma)=lg_C(\gamma)$. Therefore, $\widetilde{d}_C=d_C$.
\end{remark}


\begin{definition}
Let $f\colon \R^n\to \R^m$ be a function.
We say that $f$ is {\bf arc-analytic} if for any analytic curve $\alpha\colon (-1,1)\to \R^n$ we have that there exists $0<\varepsilon\leq 1$ such that $f\circ \alpha$ is analytic in $(-\varepsilon,\varepsilon)$.
We say that $f$ is {\bf image arc-analytic} if for any analytic curve $\alpha\colon (-1,1)\to \R^n$ we have that there exist $0<\varepsilon\leq 1$ and an analytic curve $\gamma \colon (-\varepsilon,\varepsilon) \to\R^n$ such that $\gamma(0)=f\circ \alpha (0)$ and $(Im(\gamma),0)=(Im(f\circ \alpha),0)$.
\end{definition}
\begin{remark}
 It is clear that if $f$ is arc-analytic then $f$ is image arc-analytic. However, the converse it is not true in general. For example, it is easy to see that $f\colon \R\to \R$ given by $f(t)=t^{\frac{1}{3}}$ is image arc-analytic, but it is not arc-analytic.
\end{remark}

Moreover, there are image arc-analytic and bi-Lipschitz mappings that are not arc-analytic, as we can see in the next example.
\begin{example}\label{non-arc-analytic}
Let $\phi\colon (\R^2,0)\to (\R^2,0)$ be the mapping given by $\phi(x,y)=(x,y+x^{\frac{4}{3}})$. It is clear that $\phi$ is bi-Lipschitz around the origin and $t\mapsto\phi(t,0)=(t,t^{\frac{4}{3}})$ is not analytic. Then $\phi$ is not arc-analytic. However, for any analytic curve $\alpha\colon (-1,1)\to \R^2$, we have that $\gamma(t):=\phi\circ \alpha (t^3)$ is analytic in $(-\varepsilon,\varepsilon)$ for some $\varepsilon>0$.
\end{example}

\begin{remark}\label{lip-blow}
Let $\varphi\colon (\R^n,0)\to (\R^n,0)$ be an arc-analytic and bi-Lipschitz homeomorphism. Then, $\varphi$ is a blow-spherical homeomorphism such that $\nu_{\varphi}(-v)=-\nu_{\varphi}(v)$ for any $v\in\mathbb{S}^{n-1}$.
\end{remark}

Next result is a generalization of Theorem 3.6 in \cite{Valette:2010}.
\begin{theorem}\label{image-arc-Lip}
Let $X$ and $Y$ be two real analytic hypersurfaces in $\R^n$. If there exists a blow-spherical homeomorphism $\varphi\colon (\R^n,X,0)\to (\R^n,Y,0)$ such that $\varphi$ is image arc-analytic, then $m(X)\equiv m(Y)\,{\rm mod\,} 2$.
\end{theorem}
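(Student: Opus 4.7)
My proof plan is to reduce the statement, via Lemma \ref{path_independence}, to an identity mod $2$ between lengths of paths on $\mathbb{S}^{n-1}$, and then to transport an allowed path for $C_X'$ across the induced homeomorphism $\nu_\varphi\colon\mathbb{S}^{n-1}\to\mathbb{S}^{n-1}$ to obtain an almost allowed path for $C_Y'$ of the same length. The strategy breaks into two parts: (i) show that the image arc-analytic hypothesis forces $\nu_\varphi$ to be antipodal, i.e.\ $\nu_\varphi(-v)=-\nu_\varphi(v)$ for all $v\in\mathbb{S}^{n-1}$; (ii) use this symmetry together with Theorem \ref{multiplicities} to identify the distances that compute the two multiplicities modulo $2$.

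For (i), I would fix $v\in\mathbb{S}^{n-1}$ and apply image arc-analyticity to the analytic arc $\alpha(t)=tv$. There exist $\varepsilon>0$ and an analytic curve $\sigma\colon(-\varepsilon,\varepsilon)\to\R^n$ with $\sigma(0)=0$ and $(\mathrm{Im}(\sigma),0)=(\mathrm{Im}(\varphi\circ\alpha),0)$ as set germs. Since $\varphi$ is a homeomorphism fixing the origin, $\varphi\circ\alpha(s)\neq 0$ for $s\neq 0$ while $\varphi\circ\alpha(s)\to 0$ as $s\to 0$, so $\sigma$ is not identically zero. Writing $\sigma(t)=t^k w+o(t^k)$ with $k=\mathrm{ord}_0\sigma$ and $w\neq 0$, the tangent cone $C(\mathrm{Im}(\sigma),0)\cap\mathbb{S}^{n-1}$ consists of the limits of $\sigma(t)/\|\sigma(t)\|$ as $t\to 0^{\pm}$, namely $\{w/\|w\|,(-1)^k w/\|w\|\}$. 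On the other hand, by continuity of the blow-spherical extension $\varphi'$, $\varphi(sv)/\|\varphi(sv)\|\to\nu_\varphi(v)$ as $s\to 0^+$ and $\varphi(sv)/\|\varphi(sv)\|\to\nu_\varphi(-v)$ as $s\to 0^-$, so $C(\mathrm{Im}(\varphi\circ\alpha),0)\cap\mathbb{S}^{n-1}=\{\nu_\varphi(v),\nu_\varphi(-v)\}$. These two vectors are distinct because $\nu_\varphi$ is a homeomorphism and $v\neq -v$; equating the two tangent cones therefore forces $k$ to be odd and $\nu_\varphi(-v)=-\nu_\varphi(v)$.

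For (ii), I would choose $\lambda\in\mathbb{S}^{n-1}$ in the open dense subset of directions that are generic for $X$ in the sense of Lemma \ref{path_independence} and such that $\nu_\varphi(\lambda)$ is generic for $Y$; such $\lambda$ exists because $\nu_\varphi$ is a homeomorphism. Let $\gamma\colon[0,1]\to\mathbb{S}^{n-1}$ be an allowed path for $C_X'$ from $\lambda$ to $-\lambda$ realizing $d_{C_X'}(\lambda,-\lambda)$, and set $\tilde\gamma=\nu_\varphi\circ\gamma$, which goes from $\nu_\varphi(\lambda)$ to $-\nu_\varphi(\lambda)$ by the antipodal property. By Theorem \ref{multiplicities}, $\nu_\varphi(C_X')=C_Y'$, so $I_{\tilde\gamma}=\nu_\varphi(I_\gamma)$ is finite and $\tilde\gamma$ meets $C_Y'$ only at the images of the crossings of $\gamma$. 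At each crossing $p=\gamma(t_0)$, the $C^1$-transverse chart $\phi\colon U_p\to V$ sending $C_X'\cap U_p$ to $V\cap\{x_n=0\}$ and $\gamma$ to the $x_n$-axis can be conjugated by $\nu_\varphi^{-1}$ to give a topological chart $\phi\circ\nu_\varphi^{-1}$ near $\nu_\varphi(p)$ realizing the same local model for $(C_Y',\tilde\gamma)$. Hence $\tilde\gamma$ is an almost allowed path for $C_Y'$ with $lg_{C_Y'}(\tilde\gamma)=lg_{C_X'}(\gamma)=d_{C_X'}(\lambda,-\lambda)$.

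Finally, by the density-based perturbation that underlies Remark \ref{key_remark} (applied near each crossing using density of $\mathrm{Reg}_1(C_Y')$), the parity of the length of any almost allowed path depends only on its endpoints and agrees with $d_{C_Y'}\bmod 2$. Combining this with Lemma \ref{path_independence} applied at the generic pair $\pm\nu_\varphi(\lambda)$ for $Y$ gives
\begin{equation*}
m(Y)\equiv d_{C_Y'}(\nu_\varphi(\lambda),-\nu_\varphi(\lambda))\equiv lg_{C_Y'}(\tilde\gamma)=lg_{C_X'}(\gamma)=d_{C_X'}(\lambda,-\lambda)\equiv m(X)\pmod 2.
\end{equation*}
The main obstacle I expect is step (i): the image arc-analytic hypothesis is quite weak (it is satisfied, for instance, by the non-arc-analytic map of Example \ref{non-arc-analytic}), so extracting antipodal symmetry from it relies delicately on matching the tangent cone of a single analytic arc with that of the two-sided radial image. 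Once (i) is in hand, step (ii) is essentially bookkeeping between $C_X'$ and $C_Y'$.
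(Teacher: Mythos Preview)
Your proposal is correct and follows essentially the same approach as the paper: first use the image arc-analytic hypothesis on radial arcs $t\mapsto tv$ to deduce the antipodal symmetry $\nu_\varphi(-v)=-\nu_\varphi(v)$ (the paper phrases this via Theorem~\ref{half-line} rather than a direct tangent-cone comparison, but the content is the same), and then combine Theorem~\ref{multiplicities} with Lemma~\ref{path_independence} to equate the two multiplicities mod $2$. Your step~(ii) spells out the path-transport argument more explicitly than the paper, which simply asserts the equality $d_{C_X'}(v;-v)=d_{C_Y'}(\nu_\varphi(v);-\nu_\varphi(v))$ and leaves its justification (the content of Remark~\ref{key_remark} and the proof of Theorem~\ref{surface}) implicit.
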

\begin{proof}
Fixed $v\in \R^n\setminus\{0\}$, by the hypotheses, there exists an analytic curve $\gamma\colon (-\varepsilon,\varepsilon)\to \R^n$ such that $(Im(\gamma),0)=(Im(\beta),0)$, where $\beta(t):=\varphi(tv)$. Thus, there exists $\delta>0$ such that $\beta((-\delta,\delta))\subset \Gamma:=Im(\gamma)$. Moreover, since $\varphi$ is a homeomorphism, we can assume that $\gamma$ is injective, since $(Im(\beta),0)\not=(\{0\},0)$ and also $(Im(\beta),0)$ is not homeomorphic to $([0,1),0)$. In particular, $\Gamma$ is blow-spherical regular at $0$, then by Theorem \ref{half-line}, 
$$
w:=\lim\limits_{t\to 0^+}\frac{\gamma(t)}{\|\gamma(t)\|}=-\lim\limits_{t\to 0^-}\frac{\gamma(t)}{\|\gamma(t)\|}.
$$
Let $\Gamma_1$ and $\Gamma_2$ be the connected components of $\Gamma\setminus \{0\}$ such that $\beta((-\delta,0))\subset \Gamma_1$ and $\beta((0,\delta))\subset \Gamma_2$. We define $a=1$ and $b=0$ if $\Gamma_1=\gamma((-\varepsilon,0))$ and, $a=0$ and $b=1$ if $\Gamma_1=\gamma((0,\varepsilon))$.
Then, for each sequence $\{s_k\}\subset (0,\delta)$ such that $\lim s_k=0$ there exists a sequence $\{\tilde s_k\}\subset \Gamma_2$ such that $\lim \tilde s_k=0$ and $\gamma(\tilde s_k)=\beta(s_k)$ for $k>>1$. Then,
$$
\lim\limits_{k\to +\infty}\frac{\beta(s_k)}{\|\beta(s_k)\|}=\lim\limits_{k\to +\infty}\frac{\gamma(\tilde s_k)}{\|\gamma(\tilde s_k)\|}=(-1)^aw.
$$
Therefore $
\lim\limits_{t\to 0^+}\frac{\beta(t)}{\|\beta(t)\|}=(-1)^aw
$
and by the same reason, $
\lim\limits_{t\to 0^-}\frac{\beta(t)}{\|\beta(t)\|}=(-1)^bw.
$
In particular, 
$
\lim\limits_{t\to 0^+}\frac{\beta(\lambda t)}{\|\beta(\lambda t)\|}=(-1)^aw
$
and
$
\lim\limits_{t\to 0^-}\frac{\beta(\lambda t)}{\|\beta(\lambda t)\|}=(-1)^bw
$
for any $\lambda \in (0,+\infty)$. This implies that the homeomorphism $\nu_{\varphi}:\mathbb{S}^{n-1}\to \mathbb{S}^{n-1}$  
satisfies $\nu_{\varphi}(-x)=-\nu_{\varphi}(x)$ for any $x\in\mathbb{S}^{n-1}$. Thus, for a generic $v\in \mathbb{S}^{n-1}$, by Lemma \ref{path_independence}, we have
$$
m(X)\equiv d_{C_X'}(v;-v)=d_{C_Y'}(\nu_{\varphi}(v);-\nu_{\varphi}(v))\equiv m(Y)\,{\rm mod\,} 2.
$$
\end{proof}

In fact, we have the following.
\begin{corollary}\label{simmetric-blow}
Let $X$ and $Y$ be two real analytic hypersurfaces in $\R^n$. If $\varphi\colon (\R^n,X,0)\to (\R^n,Y,0)$ a blow-spherical homeomorphism such that $\nu_{\varphi}(-v)=-\nu_{\varphi}(v)$ for any $v\in\mathbb{S}^{n-1}$, then $m(X)\equiv m(Y)\,{\rm mod\,} 2$.
\end{corollary}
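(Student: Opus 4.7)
\medskip

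\noindent\textbf{Proof proposal.} My plan is to observe that this corollary is essentially the concluding portion of the proof of Theorem \ref{image-arc-Lip}: in that theorem, the hypothesis of image arc-analyticity was used solely to \emph{derive} the antipodal identity $\nu_{\varphi}(-x)=-\nu_{\varphi}(x)$ on all of $\mathbb{S}^{n-1}$, after which the final two lines invoked Lemma \ref{path_independence} to conclude the congruence. Here that antipodal identity is given to us directly, so I would simply repeat the final argument.

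In detail, by Theorem \ref{multiplicities} the map $\nu_{\varphi}\colon \mathbb{S}^{n-1}\to \mathbb{S}^{n-1}$ is a homeomorphism carrying $C_X'$ onto $C_Y'$ and, by hypothesis, it commutes with the antipodal map $a$. I would first choose $v\in\mathbb{S}^{n-1}\setminus C_X'$ so that the orthogonal projection with kernel $\R v$ is generic for $X$ in the sense of Lemma \ref{path_independence} and, simultaneously, the orthogonal projection with kernel $\R\nu_{\varphi}(v)$ is generic for $Y$. Such a $v$ exists because both ``generic'' conditions cut out dense open subsets of $\mathbb{S}^{n-1}$ and $\nu_{\varphi}^{-1}$ of a dense open set is dense open. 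For such $v$, Lemma \ref{path_independence} gives
\[
m(X)\equiv d_{C_X'}(v;-v)\,{\rm mod\,} 2, \qquad m(Y)\equiv d_{C_Y'}(\nu_{\varphi}(v);-\nu_{\varphi}(v))\,{\rm mod\,} 2,
\]
where the hypothesis $\nu_{\varphi}(-v)=-\nu_{\varphi}(v)$ ensures that $-\nu_{\varphi}(v)$ really is the antipode relevant to the chosen projection for $Y$.

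Next, I would equate the two distances by transporting allowed paths through $\nu_{\varphi}$, exactly as in the proof of Theorem \ref{surface}. Starting from an allowed path $\gamma$ for $C_X'$ connecting $v$ and $-v$ with $lg_{C_X'}(\gamma)=d_{C_X'}(v;-v)$, the image $\nu_{\varphi}\circ\gamma$ is an almost allowed path for $C_Y'$ (topological transversality is preserved by the homeomorphism), and by Remark \ref{key_remark} together with the density of $C^1$ regular points in $C_Y'$, it can be replaced by a genuine allowed path of the same length. This yields $d_{C_Y'}(\nu_{\varphi}(v);-\nu_{\varphi}(v))\leq d_{C_X'}(v;-v)$; the reverse inequality is obtained by applying the same construction to $\nu_{\varphi}^{-1}$. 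Combining with the displayed congruences then gives $m(X)\equiv m(Y)\,{\rm mod\,} 2$.

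The main obstacle, such as it is, is the genericity step: Lemma \ref{path_independence} is stated for ``generic'' projections and ``generic'' endpoints, and one must check that these conditions on the $X$-side and the $Y$-side can be satisfied simultaneously under the topological map $\nu_{\varphi}$. As noted above this is immediate from the fact that $\nu_{\varphi}$ is a homeomorphism, but it is the one place where some care is required. The rest of the argument is a direct transcription of the concluding lines of the proof of Theorem \ref{image-arc-Lip}, combined with the path-transport technique from Theorem \ref{surface}.
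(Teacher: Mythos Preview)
Your proposal is correct and matches the paper's approach exactly. The paper gives no separate proof of this corollary; it simply notes ``In fact, we have the following'' after Theorem \ref{image-arc-Lip}, since the final displayed line of that proof,
\[
m(X)\equiv d_{C_X'}(v;-v)=d_{C_Y'}(\nu_{\varphi}(v);-\nu_{\varphi}(v))\equiv m(Y)\,{\rm mod\,} 2,
\]
uses only the antipodal identity, which is here assumed. Your extra care in justifying the middle equality via the path-transport argument of Theorem \ref{surface} and Remark \ref{key_remark}, and in handling simultaneous genericity, fills in details the paper leaves implicit but does not depart from its method.
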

A consequence of Theorem \ref{surface-non-embedded} and the proof of Theorem \ref{image-arc-Lip} is the following:
\begin{corollary}\label{appl_teo_surface}
Let $X$ and $Y$ be two real analytic surfaces in $\R^3$. If there exists an image arc-analytic bi-Lipschitz homeomorphism $\varphi\colon (X,0)\to (Y,0)$, then $m(X)\equiv m(Y)\,{\rm mod\,} 2$.
\end{corollary}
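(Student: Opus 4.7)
The plan is to reduce the corollary to Theorem~\ref{surface-non-embedded} by verifying its key hypothesis, namely that the induced map $\nu_\varphi\colon C_X\to C_Y$ satisfies $\nu_\varphi(-v)=-\nu_\varphi(v)$ for every $v\in C_X'$, and then quoting that theorem directly.

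First, the bi-Lipschitz hypothesis (together with the image arc-analytic hypothesis) ensures that $\nu_\varphi\colon C_X\to C_Y$ is a well-defined continuous map, and hence that $\varphi$ is a blow-spherical homeomorphism in the sense of the paper. The core of the argument is to adapt the proof of Theorem~\ref{image-arc-Lip} from the ambient setting to the present non-ambient one. Given $v$ in the smooth $1$-dimensional part of $C_X'$ with $-v\in C_X$, the Curve Selection Lemma applied to the pure $2$-dimensional real analytic surface $X$ near $0$ produces an injective analytic arc $\alpha\colon(-\varepsilon,\varepsilon)\to X$ with $\alpha(0)=0$ of odd order of vanishing at $0$, whose one-sided tangent directions $\lim_{t\to 0^\pm}\alpha(t)/\|\alpha(t)\|$ are $v$ and $-v$ respectively. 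Applying the image arc-analyticity hypothesis to $\alpha$ yields an analytic arc $\gamma\colon(-\delta,\delta)\to\R^3$ with $\gamma(0)=0$ and $({\rm Im}\,\gamma,0)=({\rm Im}(\varphi\circ\alpha),0)$. By Theorem~\ref{half-line}, the one-sided limits $\lim_{s\to 0^\pm}\gamma(s)/\|\gamma(s)\|$ exist and are antipodal. Since $\varphi\circ\alpha$ is injective, continuous and sends $0$ to $0$, for small $|t|$ it visits the two local branches of ${\rm Im}\,\gamma\setminus\{0\}$ on opposite sides of $t=0$, which transfers the antipodal relation from $\gamma$ to $\varphi\circ\alpha$ and yields $\nu_\varphi(-v)=-\nu_\varphi(v)$. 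The continuity of $\nu_\varphi$ then extends this relation from the dense subset of such $v$ to all of $C_X'$, after which Theorem~\ref{surface-non-embedded} applies and gives $m(X)\equiv m(Y)\,{\rm mod\,}2$.

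The main obstacle is producing the analytic arcs $\alpha$ of the required form: in the ambient case of Theorem~\ref{image-arc-Lip} one simply takes $\alpha(t)=tv$, but here the arc must stay inside the surface $X$ on both sides of $t=0$, which forces one to exploit the analytic structure of $X$ near $0$ together with the odd-order vanishing condition in order to obtain enough directions $v\in C_X'$ to invoke the continuity/density step.
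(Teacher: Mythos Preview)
Your approach is exactly the paper's: the corollary is stated there as a direct consequence of Theorem~\ref{surface-non-embedded} together with the proof of Theorem~\ref{image-arc-Lip}, with no further detail given. You have correctly isolated the only new work, namely producing, for a dense set of directions $v\in C_X'$, a two-sided analytic arc inside $X$ through $0$ with tangent directions $\pm v$, so that the image arc-analytic hypothesis can be applied exactly as in the proof of Theorem~\ref{image-arc-Lip}.

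The one imprecision is your invocation of the Curve Selection Lemma: CSL only furnishes one-sided arcs $[0,\varepsilon)\to X$ and says nothing about the parity of the order of vanishing. The argument you gesture at in your last paragraph is what actually works. For a generic $v\in C_X'$ (a smooth point of $C_X$ with $k_X(v)$ odd; note $-v\in C_X'$ automatically since $C_X'$ is $a$-invariant), slice $X$ by a plane $P$ through $0$ containing $v$ and transverse to $C_X$ at $v$. The real analytic curve $X\cap P$ decomposes via Lemma~\ref{branches} into two-sided analytic branches $\Gamma_i$, each the image of an injective analytic map $(-\varepsilon,\varepsilon)\to\R^3$. A branch of even order tangent to $v$ contributes $2$ to $k_X(v)$, while one of odd order with tangent line $\mathbb{R}v$ contributes $1$; since $k_X(v)$ is odd, at least one $\Gamma_i$ has odd order with tangent line $\mathbb{R}v$, and this is the arc $\alpha$ you feed into the image arc-analytic hypothesis. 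After that, your argument and the paper's coincide.
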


\begin{remark}\label{rem:gen_valette_two}
 Theorem \ref{image-arc-Lip} is really a generalization of Theorem 3.6 in \cite{Valette:2010}, as we can see in the next example.
\end{remark}
\begin{example}
Let $\phi\colon (\R^2,0)\to (\R^2,0)$ be the bi-Lipschitz homeomorphism of Example \ref{non-arc-analytic}. If $X=\{(x,y)\in\R^2;\, y=0\}$ and $Y=\{(x,y)\in\R^2;\, y^3=x^4\}$ then $\phi(X)=Y$. However, by Proposition 3.2 in \cite{FukuiKP:2004}, there is no arc-analytic bi-Lipschitz homeomorphism $\varphi\colon (\R^2,0)\to (\R^2,0)$ such that $\varphi(X)=Y$.
\end{example}

\noindent {\bf Acknowledgements.} The author would like to thank Eur\'ipedes C. da Silva for his interest in this research.

\end{document}